\definecolor{erin}{RGB}{0,255,63}
\definecolor{orchid}{RGB}{218,112,214}
 \def\dpol{\mathop{{}^{\textsc{D}}\text{Pol}}}
\newcommand{\bA}{\mathbf{A}}
\newcommand{\bB}{\mathbf{B}}
\newcommand{\bC}{\mathcal{C}}
\newcommand{\bF}{\mathcal{F}}
\newcommand{\bM}{\mathcal{M}}
\newcommand{\bN}{\mathcal{N}}
\newcommand{\bS}{\mathcal{S}}
\newcommand{\bU}{\mathbb{U}}
\newcommand{\Comb}{\mathrm{Comb}}
\newcommand{\codim}[1]{\mathrm{codim}(#1)}
\newcommand{\fracexp}[3]{\left(\frac{#1}{#2}\right)^{\!\!#3}}
\newtheorem{theorem}{Theorem}[section]
\newtheorem{lemma}[theorem]{Lemma}
\newtheorem{corollary}[theorem]{Corollary}
\newtheorem{proposition}[theorem]{Proposition}
\theoremstyle{definition}
\newtheorem{definition}[theorem]{Definition}
\theoremstyle{remark}
\newtheorem*{remark}{Remark}
\author{No\'emie Combe\footnote{Aix-Marseille Univ, CNRS, Centrale Marseille, I2M, Marseille, France}~~and Vincent Jug\'e\footnote{
LSV, CNRS, ENS Paris-Saclay \& Inria, France --- This author is supported by EU under ERC EQualIS (FP7-308087).}}
\title{Counting bi-colored A'Campo forests}
\begin{document}

\maketitle

\abstract{ We give an efficient method of counting bi-colored A'Campo forests and provide a stratification criterion for the space of those graphs. 
We deduce an algorithm to count these forests in polynomial time, answering an open question of A'Campo.}

\section{Introduction}
\subsection{Results on the number of A'Campo forests}
We consider the problem suggested by A'Campo of counting the number of special bi-colored forests, recently introduced in~\cite{AC16.2}. 
These forests are combinatorial objects given by two systems of $d$ planar curves colored in $O$ and in $E$ respectively,
which are the inverse images of the real and imaginary axis by the polynomial map $P$.
In order to suit our considerations we draw our forests in a disk, with terminal vertices on the boundary of the disk and call them {\it configurations}. 

We give an efficient method of counting these graphs and deduce a computer program to count these forests in polynomial time.
In addition, an asymptotic counting for large degrees is given. In order to make the counting effective we introduce the notion of canonical splitting and partial configuration. 

Precisely, the number of A'Campo forests follows from the following theorem:

\bigskip

{\bf Theorem}~\ref{thm:19}
\textit{
Let us denote by $\#N_1(c,d)$ is the number of configurations of codimension $c$ and degree $d$,
and let $\bN_1$ be the associated bivariate generating function, defined by: $\bN_1(x,y) = \sum_{c,d \geq 0} \#N_1(c,d) x^c y^d$.
There exists an auxiliary bivariate generating function $\bN_2$, with non-negative coefficients, such that $\bN_1(x,y)$ and $\bN_2(x,y)$ are solutions of:
\[\bN_1 = 1 + y \bN_2^4 \text{ and } (1 + y \bN_2^4 - x y \bN_2^5)(1 - \bN_2 + 2 y \bN_2^4 - y \bN_2^5 + x y \bN_2^6 + y^2 \bN_2^8) + x^3 y^2 \bN_2^{11} = 0.\]
}
 
\subsection{Stratifying polynomial forests}
This counting problem raises the question of defining a stratification for the space of graphs.  
Consider the space $\dpol_{d}$ of monic, complex polynomial maps of degree $d>1$, $P:\mathbb{C}\to \mathbb{C}$ with distinct roots. 

These decorated graphs verify the seven following properties~\cite{AC16.2}:
 
\begin{enumerate}
\item  The graph has no cycles. The graph is a forest. The non-compact edges are properly embedded in $\mathbb{C}$.
\item The complementary regions  have a $4$-colouring by symbols $A,B,C,D$.
\item  The edges are oriented and have a $2$-colouring  by symbols $E,O$. The symbol $E$ if the edge
separates $D$ and $A$ or $B$ and $C$ coloured regions.  The symbol $O$ if the edge
separates $A$ and $B$ or $C$ and $D$ coloured regions. The orientation is right-handed if one crosses the edge from
$D$ to $A$ or $A$ to $B$, and left-handed if one crosses $B$ to $C$ or $C$ to $D$.  
\item  The picture has $4d$ edges that are near infinity asymptotic to the 
rays $re^{k\pi \imath/4d},\,r>0,\,k=0,1, \cdots ,4d-1$. 
The colors $R,I$ alternate and the orientations
of the $E$ coloured and also the $O$ coloured alternate between out-going and in-going.
\item  Near infinity the sectors are coloured in the counter-clockwise orientation by the $4$-periodic sequence of symbols $A,B,C,D,A,B, \cdots $.
\item  The graph can have $5$ types of vertices:
for the first $4$ types only $A,B$ or $B,C$ or $C,D$ or $D,A$ regions are incident and only edges of one color are incident, moreover for the fifth type regions of all $4$ colors are incident and the colors appear in the counter clock-wise orientation as $A,B,C,D,A,B, \cdots $. So, in particular the graph has no terminal vertices.
\item  At all points $p\in \pi_P$ the germ of graph $\pi_P$ is for a $k=1,2, \cdots$ 
smoothly diffeomorphic to the germ at $0\in \mathbb{C}$ of $\{z \in \mathbb{C} \mid {\rm Re}(z^k)=0\}$. 
 \end{enumerate}
These graphs are classified by attributing a number $d$ and a number $c$ which are respectively the crossing number and the codimension $c$. 
The crossing number is given by the number of intersections of $E$ and $O$ curves. These are the roots of the degree $d$ polynomial $P$, hence this number is equal to $d$. 
 
On the other hand, the codimension number $c$ is given by the number and multiplicity of intersecting curves of the same color, such that the crossing number remains equal to $d$. 
Indeed, these points have multiplicity higher than 1 and are thus the critical points of the maps $Re(P)$ and $Im(P)$.
In particular, by critical point we mean that at a given point $(x,y)\in \mathbb{R}^2$ the partial derivatives of the maps are zero and satisfy the algebraic equation $Re(P)=0$ and/or $Im(P)=0$.  

Our stratification of the space of graphs relies on the number and multiplicties of these critical points.
For instance, a codimension $c=0$ configuration contains no meeting points and $d$ crossing points.
The configurations of codimension $c=1$ are those such that there exists one critical point for $Re(P)$ or $Im(P)$ of multiplicity 2:
this point is drawn as the intersection of two curves of the same color.
Two critical points for the configurations contribute to defining a forest of codimension $c=2$. 
 
Roughly speaking, the codimension $c$ is defined by the number of inner nodes which have incident edges of the same color and by their valencies. 
The codimension number is $c=\sum_{i=1}^{m}c_{i}$ where the $c_{i}$ are the multiplicities of the critical points.
We adopt the following rule concerning the counting of the codimensions:
if the valency of the inner node is $2k$, then the multiplicity of the critical point is $k$, and its local codimension is equal to $2k-1$.
Indeed, a critical point is of multiplicity $k$ when the derivative polynomial $P'$ is constrained to have a root of multiplicity $k-1$,
itself located on the union of varieties of codimension 1 formed by the space $\{z \in \mathbb{C} \mid {\rm Re}(P(z)) = 0 \text{ or } {\rm Im}(P(z)) = 0\}$.
Finally, observe that, since the polynomials we consider have distinct roots, the crossing points of their graphs are all distinct.
 
\section{Configurations and splittings}
\label{section:1}
In order to count our graphs, we draw them in a disk with the terminal vertices on its boundary. Such drawings are called \emph{configurations}.
We are particularly interested in counting configurations up to some equivalence relation:
two configurations are considered equivalent if there exists a homeomorphism of the disk, leaving the boundary of the disk invariant,
and that maps one configuration to another.
The notion of configuration will be redefined so as to suit our next considerations. Showing that this definition matches the one above is straightforward.
In addition, we introduce the notion of splitting of a configuration, which will be necessary for our the counting result.

\begin{definition}
\label{def:1}
Let $d$ be a non-negative integer. 
Let $P_0, P_1, \ldots, P_{4d-1}$ be points lying in the clockwise order on the unit circle $\partial \bU$.
In addition, let us consider $2d$ piecewise-affine lines lying inside the unit disk $\bU$,
which we divide between \emph{odd} and \emph{even} lines, so that:
\begin{itemize}
 \item every point $P_a$ belongs to one line, and every line touches $\partial \bU$ at its endpoints only;
 \item every odd (respectively, even) line joins points $P_a$ and $P_b$ such that $\{a,b\} \equiv \{0,2\} \pmod{4}$ (respectively, $\{a,b\} \equiv \{1,3\} \pmod{4}$);
 \item every line touches one line of the opposite parity, at a point that we call \emph{crossing point}, and it must cross that line at that point;
 \item two lines of the same parity may touch each other at some point, which we call \emph{meeting point}, and they may not cross each other at that point;
 \item the union of these $2d$ lines is cycle-free.
\end{itemize}
The union of these $2d$ lines is called a \emph{configuration} of degree $d$, and the
collection of these $2d$ lines is called a \emph{splitting} of the configuration.
\end{definition}
Note that a configuration $\bA$ may have several splittings.
This phenomenon is illustrated in Figure~\ref{fig:1} (even lines are drawn in \textbf{\color{erin}erin}, and odd lines are drawn in \textbf{\color{orchid}orchid}).
Section~\ref{section:2} is devoted to defining a canonical splitting of $\bA$.
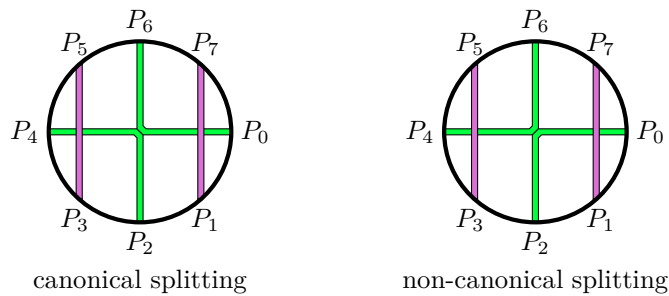
\begin{figure}[!ht]
\begin{center}
\begin{tikzpicture}[scale=0.4,>=stealth]
\draw[fill=erin] (-3,0.1) -- (-0.1,0.1) -- (0.1,-0.1) -- (0.1,-3) -- (-0.1,-3) -- (-0.1,-0.2) -- (-0.2,-0.1) -- (-3,-0.1) -- cycle;
\draw[fill=erin] (3,-0.1) -- (0.1,-0.1) -- (-0.1,0.1) -- (-0.1,3) -- (0.1,3) -- (0.1,0.2) -- (0.2,0.1) -- (3,0.1) -- cycle;
\draw[fill=orchid] (-1.9,-2.24) -- (-1.9,2.24) -- (-2.1,2.14) -- (-2.1,-2.14) -- cycle;
\draw[fill=orchid] (1.9,-2.24) -- (1.9,2.24) -- (2.1,2.14) -- (2.1,-2.14) -- cycle;
\draw[ultra thick] (0,0) circle (3);
\draw[fill=erin] (10,-0.1) -- (12.9,-0.1) -- (13.1,0.1) -- (13.1,3) -- (12.9,3) -- (12.9,0.2) -- (12.8,0.1) -- (10,0.1) -- cycle;
\draw[fill=erin] (16,0.1) -- (13.1,0.1) -- (12.9,-0.1) -- (12.9,-3) -- (13.1,-3) -- (13.1,-0.2) -- (13.2,-0.1) -- (16,-0.1) -- cycle;
\draw[fill=orchid] (11.1,-2.24) -- (11.1,2.24) -- (10.9,2.14) -- (10.9,-2.14) -- cycle;
\draw[fill=orchid] (14.9,-2.24) -- (14.9,2.24) -- (15.1,2.14) -- (15.1,-2.14) -- cycle;
\draw[ultra thick] (13,0) circle (3);
\node[anchor=west] at (3,0) {$P_0$};
\node[anchor=north] at (2.15,-2.24) {$P_1$};
\node[anchor=north] at (0,-3) {$P_2$};
\node[anchor=north] at (-2.15,-2.24) {$P_3$};
\node[anchor=east] at (-3,0) {$P_4$};
\node[anchor=south] at (-2.15,2.24) {$P_5$};
\node[anchor=south] at (0,3) {$P_6$};
\node[anchor=south] at (2.15,2.24) {$P_7$};
\node[anchor=north] at (0,-4.25) {canonical splitting};
\node[anchor=west] at (16,0) {$P_0$};
\node[anchor=north] at (15.15,-2.24) {$P_1$};
\node[anchor=north] at (13,-3) {$P_2$};
\node[anchor=north] at (10.85,-2.24) {$P_3$};
\node[anchor=east] at (10,0) {$P_4$};
\node[anchor=south] at (10.85,2.24) {$P_5$};
\node[anchor=south] at (13,3) {$P_6$};
\node[anchor=south] at (15.15,2.24) {$P_7$};
\node[anchor=north] at (13,-4.25) {non-canonical splitting};
\end{tikzpicture}
\caption{Two splittings of the same configuration (of degree $2$ and codimension $1$)}
\end{center}
\label{fig:1}
\end{figure}
Two other important notions for configurations are the \emph{codimension} and the \emph{equivalence} of configurations.
\begin{definition}
\label{def:2}
Let $\bA$ be a configuration. We define the \emph{codimension} of a meeting point of $\bA$ as follows:
\begin{itemize}
 \item a point at which $k \geq 3$ lines of $\bA$ meet each other has codimension $k$;
 \item a point at which exactly $2$ lines of $\bA$ meet each other has codimension $1$.
\end{itemize}
The codimension of $\bA$ is defined as the sum of the codimension of its meeting points.
\end{definition}
\begin{definition}
\label{def:3}
Two configurations $\bA$ and $\bB$ are said \emph{equivalent} if
some homeomorphism of the unit disk maps $\bA$ to $\bB$ and maps each point $P_i$
to itself.
\end{definition}
Note that the codimension and the degree of a configuration does not depend on which splitting we considered,
and that two equivalent configurations have the same degree and the same codimension.
Hereafter, we aim to compute the number of (equivalence classes of) configurations with codimension $c$ and degree $d$.

\section{Canonical splitting}

\label{section:2}
We saw above that some configurations may have several splittings.
Distinguishing which configurations have one splitting leads to the following definition.
\begin{definition}
\label{def:4}
A configuration $\bA$ without any meeting point is said to be a \emph{flat} configuration.
\end{definition}
Indeed, every flat configuration has one splitting.
We investigate now a way to design a canonical splitting for every configuration (including the non-flat ones).

Throughout this section we call $\bA$ a configuration, $\bU$ the unit disk and $\bC$
a connected component of $\bU \setminus \bA$. The lemmas always include these three objects.

\begin{lemma}
\label{lem:5}
There exists a (unique) integer $k \in \mathbb{Z}/4\mathbb{Z}$ such that
the border $\partial \bC \cap \partial \bU$ is a non-empty union of
arcs of circles of the form $[P_a,P_{a+1}]$ with $a \equiv k \pmod{4}$.
The integer $k$ is called the \emph{index} of $\bC$,
and is denoted by $\iota(\bC)$.
\end{lemma}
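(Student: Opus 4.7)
The plan is to prove Lemma~\ref{lem:5} in two steps: non-emptiness of $\partial \bC \cap \partial \bU$, followed by the mod-$4$ congruence.

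For non-emptiness, I would argue by contradiction. If $\partial \bC \cap \partial \bU$ were empty, the bounded open region $\bC$ would have its topological boundary $\partial \bC$ entirely contained in $\bA$. Since $\bC$ is a bounded planar open set, $\partial \bC$ must contain a simple closed Jordan curve lying inside $\bA$; this is a cycle in $\bA$, contradicting the cycle-free hypothesis of Definition~\ref{def:1}.

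For the mod-$4$ congruence, the strategy is to assign to each arc $[P_a, P_{a+1}]$ the color $a \bmod 4 \in \mathbb{Z}/4\mathbb{Z}$ and to propagate this coloring to a well-defined $4$-coloring of the faces of $\bU \setminus \bA$. I would verify local consistency at each interior vertex of $\bA \cup \partial \bU$. At a boundary vertex $P_j$, the line incident at $P_j$ locally separates the two adjacent arcs $[P_{j-1}, P_j]$ and $[P_j, P_{j+1}]$, whose colors $j-1$ and $j$ differ by $1 \bmod 4$. At a crossing point between two lines of opposite parity, the parity conditions of Definition~\ref{def:1} (odd lines joining residue classes $\{0,2\}$ and even lines joining residue classes $\{1,3\}$) together with the fact that each line has a single opposite-parity crossing force the four surrounding regions to exhibit the four distinct colors $0, 1, 2, 3$ arranged cyclically. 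At a meeting point where $k$ same-parity lines touch without crossing, each of these lines locally separates the same two-element subset of colors, so the regions around the meeting point carry colors from only this two-element palette. Concatenating these local compatibilities along a traversal of $\partial \bC$ yields the conclusion that all arcs of $\partial \bC \cap \partial \bU$ share a common color, which provides the unique $k \in \mathbb{Z}/4\mathbb{Z}$ in the statement.

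The main obstacle is the analysis at meeting points of higher codimension, where several same-parity lines interact at a single point and the local combinatorial picture is intricate. The parity structure of Definition~\ref{def:1} is what makes the argument go through: all lines at a meeting separate the same pair of colors, and the local rotation around the meeting forces adjacent regions to alternate between the two elements of that pair, which in turn guarantees global consistency of the coloring.
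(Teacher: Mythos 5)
Your non-emptiness argument is correct and essentially matches the paper's (both reduce to the fact that a bounded face whose boundary lies entirely in $\bA$ would force a cycle in $\bA$). The interesting divergence is in the mod-$4$ step, where your face-coloring strategy is genuinely different from the paper's. The paper argues by counting: taking two consecutive boundary arcs $[P_{a_1},P_{a_1+1}]$ and $[P_{a_2},P_{a_2+1}]$ of $\bC$, the fragment $L$ of $\partial\bC$ between them cuts off a subregion $\bU_2$ of the disk, every line of a splitting that enters $\bU_2$ is trapped inside it (it cannot cross $L$ without entering $\bC$), and such lines come in crossing pairs of one odd and one even line, so they have $4\ell$ boundary endpoints; these endpoints are exactly $P_{a_1+1},\ldots,P_{a_2}$, giving $a_2 \equiv a_1 \pmod 4$.

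However, your coloring argument as written has a genuine circularity. The local consistency claim at a crossing point --- that ``the four surrounding regions exhibit the four distinct colors $0,1,2,3$ arranged cyclically'' --- is exactly the content of Lemma~\ref{lem:9}, which the paper deduces \emph{from} Lemma~\ref{lem:5} via Lemma~\ref{lem:8}; it is not a consequence of the parity of line endpoints in Definition~\ref{def:1} alone, since the four faces touching a crossing point are not the four quadrants cut out by that single pair of lines, but sub-faces of them determined by the rest of $\bA$. The same issue occurs at meeting points: the statement that the incident lines ``separate the same two-element subset of colors'' is a property of the face indices whose existence is what you are trying to establish. There is also an unaddressed ambiguity in the propagation rule itself: crossing an interior edge should change the color by $\pm 1$, but without a way to fix the sign (equivalently, an orientation of lines, or prior knowledge of the face indices), the rule is not well defined, and no argument is given that the resulting assignment is path-independent inside the disk. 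To make the coloring approach non-circular you would need an independent mechanism --- for instance, an induction on the number of crossing pairs, or the paper's direct counting argument --- to justify the local pictures before using them.
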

\begin{proof}
First, since $\bA$ is cycle-free, it comes at onces that
$\partial \bC \cap \partial \bU$ is non-empty.
Hence, we write it as a disjoint union $[P_{a_1},P_{a_1+1}] \cup \ldots \cup [P_{a_i},P_{a_i+1}]$ of arcs of circles,
with $a_1 < a_2 < \ldots < a_i$ (and $P_{4d} = P_0$).
If that $i \geq 2$, let $L$ be the fragment of $\partial \bC$ that joins $P_{a_1+1}$ to $P_{a_2}$.
The line $L$ splits the disk $\bU$ into two parts: one part $\bU_1$ (which excludes the line $L$)
that contains $\bC$ and one part $\bU_2$ (which contains $L$) that does not.
Let $\bS$ be a splitting of $\bA$, and let
$\bS'$ be the collection of those lines of $\bS$ that intersect the area $\bU_2$.
By construction, no line of $\bS$ may go inside the interior of $\bC'$,
and therefore all lines in $\bS'$ belong entirely to the area $\bU_2$.
If the collection $\bS'$ contains $\ell$ crossing points,
then it contains exactly $\ell$ odd and $\ell$ even lines, which join
$4\ell$ points on $\partial \bU$ overall.
This means that the set of endpoints $\{P_{a_1+1},\ldots,P_{a_2}\}$ has cardinality $4\ell$,
and therefore that $a_2 \equiv a_1 \pmod{4}$. This completes the proof.
\end{proof}
\begin{definition}
\label{def:6}
Let $L$ be a connected component of $\partial \bC \setminus \partial \bU$.
We call $L$ a \emph{diagonal} of $\bA$.
\end{definition}
\begin{lemma}
\label{lem:7}
Every diagonal of $\bA$ is a piecewise-affine line
whose endpoints are two points $P_a$ and $P_b$ lying on $\partial \bU$
and contains one crossing point of $\bA$.
Conversely, every crossing point belongs to $4$ pairwise distinct diagonals of $\bA$.
\end{lemma}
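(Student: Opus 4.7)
My plan is to prove the four components of the statement in turn, closing with a counting argument.

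The piecewise-affine property of a diagonal $L$ is immediate, since $L \subset \bA$; its endpoints lie on $\partial \bU$ by the very definition of $L$ as a connected component of $\partial \bC \setminus \partial \bU$. Lemma~\ref{lem:5} further pins down these endpoints: if $\iota(\bC) = k$, then the two endpoints $P_a, P_b$ of $L$ satisfy $a \equiv k+1 \pmod 4$ and $b \equiv k \pmod 4$, placing them in the two different parity classes $\{0, 2\}$ and $\{1, 3\}$ modulo $4$. In any splitting, $P_a$ and $P_b$ are thus endpoints of lines of opposite parity, so a path in $\bA$ joining them must change parity; since the parity of the line traced by $L$ can change only at a crossing point (meetings connect same-parity lines), $L$ contains at least one crossing.

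For the converse, I would fix a crossing $X$, delete it from $\bA$, and let $S_1, S_2, S_3, S_4$ be the four resulting subtrees indexed clockwise by the arms of $X$. Since $\bA$ is a planar embedded forest whose $4d$ vertices on $\partial \bU$ are all leaves, each $S_i$'s leaves form a contiguous cyclic block $A_i$ in the set $\{P_0, \ldots, P_{4d-1}\}$, and the four blocks tile this set in the same clockwise order as the arms. Let $m_i$ be the number of lines of $\bA$ fully contained in $S_i$: such lines come in crossing pairs, because if a line $\ell$ lies fully in $S_i$ then its unique crossing point is in $S_i$, forcing its crossing partner $\ell'$ to lie in $S_i$ as well. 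Hence $m_i$ is even, and since $|A_i|$ counts the $2m_i$ endpoints of fully-contained lines plus the unique endpoint in $S_i$ of the line split at $X$, we get $|A_i| = 2m_i + 1 \equiv 1 \pmod 4$. Between consecutive blocks $A_i$ and $A_{i+1}$ lies a single arc $[P_{e_i}, P_{e_i+1}]$ of $\partial \bU$ (with $P_{e_i}$ the last leaf of $A_i$); this arc bounds the region containing the quadrant of $X$ between the arms of $S_i$ and $S_{i+1}$, and its index is $\iota = e_i \pmod 4$. Moving from one such arc to the next shifts this index by $|A_{i+1}| \equiv 1 \pmod 4$, so the four adjacent regions carry the four distinct residues of $\mathbb{Z}/4\mathbb{Z}$; by Lemma~\ref{lem:5} they are pairwise distinct, and $X$ lies on four pairwise distinct diagonals.

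A final counting argument yields exactness. Since $\bA$ is cycle-free, $\partial \bC$ is a closed curve alternating between arcs of $\partial \bU$ and diagonals, so each region has the same number of arcs and diagonals. The total count of diagonals is $\sum_{\bC} \#\mathrm{arcs}(\bC) = 4d$, because each arc $[P_a, P_{a+1}]$ is adjacent to exactly one region. On the other hand, by the preceding paragraph the total number of (crossing, diagonal) incidences is $4 \cdot d = 4d$. Letting $c_L$ denote the number of crossings on a diagonal $L$, we have $\sum_L c_L = 4d$ with $c_L \geq 1$ for every $L$ and a total of $4d$ diagonals; the only possibility is that $c_L = 1$ for every $L$. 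The main technical obstacle is the subtree analysis of the second paragraph: contiguity of the $A_i$ is a standard planarity consequence of $\bA$ being a forest, while the congruence $|A_i| \equiv 1 \pmod 4$ relies crucially on the axiom that every line has a unique crossing.
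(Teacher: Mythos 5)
Your proposal follows the same overall skeleton as the paper's proof: (1) a diagonal is a piecewise-affine path from some $P_a$ to some $P_b$; (2) Lemma~\ref{lem:5} forces $a$ and $b$ to have opposite parities, and since meeting points join same-parity lines, the diagonal must contain a crossing; (3) each crossing point lies on exactly four pairwise distinct diagonals; (4) there are $4d$ diagonals and $4 \cdot d = 4d$ incidences, so each diagonal has exactly one crossing. Steps (1), (2) and (4) match the paper closely. For step (3), the paper simply asserts that it follows from cycle-freeness (a short Jordan-curve argument showing the four sectors of $\bU \setminus \bA$ around a crossing lie in four distinct regions), whereas you work it out in detail through a subtree decomposition and a mod-$4$ leaf count. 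Your argument here is essentially a direct re-proof of (a statement close to) Lemma~\ref{lem:9}, which the paper derives later from Lemma~\ref{lem:8} after Lemma~\ref{lem:7} is already in hand.

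The gap is in that subtree analysis. You claim that ``the four blocks tile this set in the same clockwise order as the arms,'' but that tiling holds only when $\bA$ is \emph{connected}. A configuration is in general a forest: other connected components of $\bA$ sit inside faces of the component containing $X$, and their $4d'$ boundary leaves interleave between your blocks $A_i$. Consequently the $A_i$ need not tile $\{P_0,\ldots,P_{4d-1}\}$, the point $P_{e_i+1}$ need not be the first leaf of $A_{i+1}$, and the clean identity $e_{i+1} - e_i = |A_{i+1}|$ fails. The argument can be patched --- each interleaved component contributes a multiple of $4$ boundary points, so the index still advances by $1 \bmod 4$, and one still has to check that the arc $[P_{e_i},P_{e_i+1}]$ is adjacent to the region containing the relevant quadrant of $X$ even when other components sit between --- but none of this is addressed. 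Your closing remark that contiguity of the $A_i$ is ``a standard planarity consequence of $\bA$ being a forest'' is exactly the point that breaks down: contiguity of the leaf-blocks of the subtrees hanging off a vertex is a property of a \emph{tree}, and quoting it for a forest hides the case you would actually need to handle.
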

\begin{proof}
Let $L$ be a diagonal of $\bA$, let $\bS$ be a splitting of $\bA$, and let $d$ be the degree of $\bA$.
First, since $\bA$ is cycle-free, we know that $L$ is a piecewise-affine line between two points $P_a$ and $P_b$ lying on $\partial \bU$.
Without loss of generality, $a$ is even and $b$ is odd.
Second, if $L$ does not contain any crossing point of $\bA$, then, progressing on $L$ from $P_a$ to $P_b$,
we observe that $L$ consists only of fragments of even lines of $\bS$, which is impossible since $L$ ends in $P_b$.
Hence, $L$ contains at least one crossing point of $\bA$.
Finally, the configuration $\bA$ has $4d$ diagonals and $d$ crossing points,
each of which belongs to exactly $4$ diagonals (since $\bA$ is cycle-free).
Hence, every crossing point of $\bA$ belongs to exactly $4$ diagonals, which do not contain any other crossing point,
thereby completing the proof.
\end{proof}
\begin{lemma}

\label{lem:8}
Let $L$ be a connected component of $\partial \bC \setminus \partial \bU$ connecting the points 
$P_a$ and $P_b$ with$a \equiv b-1 \equiv \iota(\bC) \pmod{4}$. 
$L$ is endowed with one crossing point $X$.
Take another point $Z$ of $L$ which is neither a meeting point or a crossing point.
If $Z$ lies between $P_a$ and $X$, then $Z$ is adjacent to two components of indices $\iota(\bC)$ and $\iota(\bC)+1$;
if $Z$ lies between $X$ and $P_b$, then $Z$ is adjacent to two components of indices $\iota(\bC)-1$ and $\iota(\bC)$.
\end{lemma}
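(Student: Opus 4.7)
The plan is to determine $\iota(\bC')$, where $\bC'$ denotes the component of $\bU \setminus \bA$ adjacent to $\bC$ across $L$ at $Z$, by locating an arc of $\partial \bU$ on $\partial \bC'$ and then applying Lemma~\ref{lem:5}.

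First, I would fix any splitting $\bS$ of $\bA$ and identify the lines of $\bS$ contained in $L$. Since $X$ is a crossing point where two lines of opposite parity cross, and since by Lemma~\ref{lem:7} no other crossing point lies on $L$, the sub-segment of $L$ from $P_a$ to $X$ lies entirely on a single line $\ell_1$ of $\bS$ (the unique line through $P_a$), while the sub-segment from $X$ to $P_b$ lies on a single line $\ell_2$ of the opposite parity to $\ell_1$. Because $Z$ is neither a meeting nor a crossing point, it is a regular point of exactly one of $\ell_1, \ell_2$, and so it is locally adjacent to exactly two connected components: $\bC$ on one side, and $\bC'$ on the other.

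Second, I would establish the local picture near $P_a$ (the case near $P_b$ is symmetric). In a small enough neighborhood of $P_a$, the line $\ell_1$ separates $\bU$ into two half-disks, each adjacent to one of the two arcs of $\partial \bU$ incident to $P_a$: the arc $[P_a, P_{a+1}]$ on one side and the arc $[P_{a-1}, P_a]$ on the other. Since $a \equiv \iota(\bC) \pmod 4$, Lemma~\ref{lem:5} forces $[P_a, P_{a+1}] \subseteq \partial \bC$; therefore $\bC$ lies on the side of $\ell_1$ containing $P_{a+1}$, while the region immediately across $\ell_1$ near $P_a$ has the arc $[P_{a-1}, P_a]$ on its boundary. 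Applying Lemma~\ref{lem:5} to that region then gives it the desired residue modulo $4$ (up to a sign depending on the orientation convention for the clockwise arcs). An analogous analysis near $P_b$, using that $[P_{b-1}, P_b] \subseteq \partial \bC$ since $b-1 \equiv \iota(\bC) \pmod 4$, handles the second case.

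The main obstacle is extending these local computations to all regular $Z$ on the respective sub-segment, not merely those sufficiently close to $P_a$ or $P_b$. When meeting points lie between $P_a$ and $Z$ on the sub-segment of $L$, the component adjacent to $Z$ across $\ell_1$ need not coincide with the one adjacent to $\bC$ immediately past $P_a$: at each such meeting point $M$, another line $\ell'$ of the same parity as $\ell_1$ touches $\ell_1$ tangentially from the side opposite $\bC$, and the pinch at $M$ splits the adjacent region into distinct components. I would show by induction on the number of meeting points on the sub-segment between $P_a$ and $Z$ that the index of the adjacent component is nonetheless invariant. The key observation is that the meeting line $\ell'$ has endpoints $P_{a'}, P_{c'}$ with $a' \equiv c' \equiv a \pmod 2$, so that the local arc-structure described by Lemma~\ref{lem:5} around each new adjacent component forces its boundary arc on $\partial \bU$ to carry the same residue mod $4$ as the one found in the base case. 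The second case of the lemma, with $Z$ between $X$ and $P_b$, is handled symmetrically by swapping the roles of $\ell_1$ and $\ell_2$.
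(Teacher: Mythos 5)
Your overall strategy diverges from the paper's: you argue by induction on the number of meeting points encountered along the sub-segment of $L$, whereas the paper reduces to the flat case by a perturbation argument, replacing $\bA$ by a flat configuration $\bA_\varepsilon$ that agrees with $\bA$ outside $\varepsilon$-disks around the meeting points, and then observing that indices of components away from those disks are unchanged. Both approaches handle the flat case in essentially the same elementary way, so the base case is fine.

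The problem is the inductive step, which is where the real content of the lemma lies and where your argument has a genuine gap. At a meeting point $M$ you claim that, because the touching line $\ell'$ has endpoints $P_{a'},P_{c'}$ with $a' \equiv c' \equiv a \pmod 2$, the component of $\bU \setminus \bA$ adjacent across $\ell_1$ just after $M$ has the same index mod $4$ as the one just before $M$. But parity mod $2$ only tells you whether a residue is in $\{0,2\}$ or $\{1,3\}$, so it cannot by itself fix a residue mod $4$; you would need to rule out the possibility that the index jumps by $2$, and nothing in your sketch does that. Tracking which endpoint of $\ell'$ sits behind each of the two regions and what arcs of $\partial\bU$ they reach is exactly the kind of local analysis you would need, and it is not supplied; attempting to supply it by invoking a Lemma~\ref{lem:8}-type statement on the diagonal along $\ell'$ risks circularity. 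A secondary issue: you assert that the sub-segment of $L$ from $P_a$ to $X$ lies on a single line $\ell_1$ of the chosen splitting $\bS$. That need not hold for an arbitrary splitting --- at a meeting point the boundary curve $\partial\bC$ may switch from one line of $\bS$ to another line of the same parity --- and you cannot appeal to the canonical splitting here, since its construction (via Lemmas~\ref{lem:9}--\ref{lem:11}) depends on Lemma~\ref{lem:8}. The paper's perturbation argument sidesteps both difficulties at once, which is why it is the cleaner route.
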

\begin{proof}
We first assume that $\bA$ is flat.
Then, the entire fragment of $L$ lying between $P_a$ and $X$ separates two components
(one that contains the arc of circle $[P_{a-1},P_a]$ and one that contains the arc of circle $[P_a,P_{a+1}]$)
with respective indices $\iota(\bC)$ and $\iota(\bC)+1$.
Similarly, the fragment of $L$ lying between $X$ and $P_b$ separates two components with respective indices $\iota(\bC)-1$ and $\iota(\bC)$,
and therefore Lemma~\ref{lem:8} holds in this case.
In the general case, observe that, for every $\varepsilon > 0$, there exists a flat configuration
$\bA_\varepsilon$ that differs from $\bA$ only on disks centered on $\bA$'s meeting points with radius $\varepsilon$.
If $\varepsilon$ is small enough, then:
\begin{itemize}
 \item $Z$ does not belong to any of these disks, hence it is adjacent to two connected components
 $\bC_1$ and $\bC_2$ of $\bU \setminus \bA_\varepsilon$ with suitable indices;
 \item every point not belonging to these disks nor to $\bA$ lies in one connected component
 $\bC_0$ of $\bU \setminus \bA$ and one connected component $\bC_\varepsilon$ of $\bU \setminus \bA_\varepsilon$,
 such that $\iota(\bC_0) = \iota(\bC_\varepsilon)$.
\end{itemize}
It follows that Lemma~\ref{lem:8} holds in the general case too.
\end{proof}
\begin{lemma}
\label{lem:9}
Every crossing point of the configuration $\bA$ is adjacent to four connected components of $\bU \setminus \bA$,
with indices $0$, $1$, $2$ and $3$.
\end{lemma}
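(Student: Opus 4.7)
The plan is to identify the four diagonals of $\bA$ passing through the crossing point $X$ (supplied by Lemma~\ref{lem:7}), apply Lemma~\ref{lem:8} to each of them to pin down the index of the corresponding adjacent component, and finish with a short parity computation in $\mathbb{Z}/4\mathbb{Z}$.

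First, since $X$ is the transverse intersection of one even and one odd line of $\bA$, it is adjacent to exactly four components $\bC_0, \bC_1, \bC_2, \bC_3$ of $\bU \setminus \bA$ arranged cyclically. I would denote by $h_0, h_1, h_2, h_3$ the four half-lines emanating from $X$ in the same cyclic order, with $h_0, h_2$ lying on one of the two lines meeting at $X$ and $h_1, h_3$ on the other, and write $P_{x_i}$ for the endpoint of $h_i$ on $\partial \bU$. By Lemma~\ref{lem:7}, the diagonal of $\bC_i$ (the component sandwiched between $h_i$ and $h_{i+1}$) passing through $X$ is exactly $h_i \cup \{X\} \cup h_{i+1}$, with endpoints $P_{x_i}$ and $P_{x_{i+1}}$.

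Next, Definition~\ref{def:1} ensures that odd lines join points whose indices are $\equiv 0, 2 \pmod 4$ while even lines join points whose indices are $\equiv 1, 3 \pmod 4$, so $\{x_0 \bmod 4, x_2 \bmod 4\}$ and $\{x_1 \bmod 4, x_3 \bmod 4\}$ are respectively $\{0, 2\}$ and $\{1, 3\}$ in one order or the other; in particular $x_{i+2} \not\equiv x_i \pmod 4$ for every $i$. Applying Lemma~\ref{lem:8} to the diagonal of each $\bC_i$ then gives $\{x_i, x_{i+1}\} \equiv \{\iota(\bC_i), \iota(\bC_i)+1\} \pmod 4$. Setting $\delta_i = (x_{i+1} - x_i) \bmod 4 \in \{+1, -1\}$, the condition $x_{i+2} \not\equiv x_i \pmod 4$ translates into $\delta_i + \delta_{i+1} \not\equiv 0 \pmod 4$, forcing $\delta_i = \delta_{i+1}$. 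Hence all four $\delta_i$ coincide, so $(x_i \bmod 4)_i$ sweeps through every residue of $\mathbb{Z}/4\mathbb{Z}$, and the same is true for $(\iota(\bC_i))_i$, giving the desired conclusion.

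The most delicate step will be justifying that Lemma~\ref{lem:8} can indeed be applied to each of the four diagonals at $X$, that is, that for each $i$ one endpoint $P_a$ of the diagonal of $\bC_i$ satisfies $a \equiv \iota(\bC_i) \pmod 4$ and the other $P_b$ satisfies $b \equiv \iota(\bC_i)+1 \pmod 4$. This should follow from Lemma~\ref{lem:5}: at each endpoint $P_x$ of a diagonal of $\bC$, the arc of $\partial \bC \cap \partial \bU$ adjacent to $P_x$ is either $[P_{x-1}, P_x]$ or $[P_x, P_{x+1}]$, which forces $x \bmod 4 \in \{\iota(\bC)+1, \iota(\bC)\}$; since Lemma~\ref{lem:7} guarantees that the two endpoints of a diagonal have different parities, they must realize the two distinct residues $\iota(\bC)$ and $\iota(\bC)+1$ modulo $4$.
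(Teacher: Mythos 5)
Your overall strategy (four diagonals through $X$, Lemma~\ref{lem:8} to pin down indices, residue bookkeeping in $\mathbb{Z}/4\mathbb{Z}$) is in the same spirit as the paper's, but there is a real gap at the pivot of the argument: you assert that ``by Lemma~\ref{lem:7}, the diagonal of $\bC_i$ passing through $X$ is exactly $h_i \cup \{X\} \cup h_{i+1}$, with endpoints $P_{x_i}$ and $P_{x_{i+1}}$.'' Lemma~\ref{lem:7} does not say this, and the statement is false in general. A diagonal of $\bC_i$ is a connected component of $\partial \bC_i \setminus \partial \bU$; when $\bA$ is not flat, $\partial \bC_i$ can \emph{turn off} the line carrying $h_i$ at a meeting point (namely whenever the touching line of the same parity lies on the $\bC_i$ side of $h_i$), and in that case the diagonal leaves $h_i$ before reaching $P_{x_i}$ and terminates at an endpoint of some other same-parity line. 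Lemma~\ref{lem:7} only guarantees that the diagonal is a piecewise-affine arc with two boundary endpoints and exactly one crossing point; it does not force those endpoints to be $P_{x_i}$ and $P_{x_{i+1}}$. Your subsequent identity $\{x_i, x_{i+1}\} \equiv \{\iota(\bC_i), \iota(\bC_i)+1\} \pmod 4$ therefore does not follow from Lemma~\ref{lem:8} as you apply it, since Lemma~\ref{lem:8} speaks of the actual endpoints $P_a$, $P_b$ of the diagonal, which need not agree with $P_{x_i}$, $P_{x_{i+1}}$ (only their parities mod $2$ are automatic). The ``most delicate step'' paragraph correctly identifies the residues of the diagonal's true endpoints, but it does not close the gap of tying those back to the $x_i$.

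The paper avoids this issue entirely: it applies Lemma~\ref{lem:8} with $Z$ chosen arbitrarily close to $X$ on each starting and ending fragment, which requires no knowledge of where the diagonals ultimately land on $\partial\bU$. This directly yields that if a component of index $k$ is adjacent to $X$, so are components of indices $k-1$ and $k+1$; since exactly four components meet at $X$, the multiset of their indices is forced to be $\{0,1,2,3\}$. If you want to keep your approach, you should drop the claim about the shape of the diagonal and instead read off, from Lemma~\ref{lem:8} applied with $Z$ near $X$, that the component across $h_i$ from $\bC_i$ has index $\iota(\bC_i) \pm 1$; the parity of $h_i$ then determines the sign, and your $\delta_i$ argument carries through. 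As written, the proof is incorrect at the stated step.
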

\begin{proof}
Due to Lemma~\ref{lem:8} (when choosing points $Z$ arbitrarily close from the crossing point),
every crossing point adjacent to a connected component of $\bU \setminus \bA$
with index $k$ is also adjacent to connected components with respective indices $k-1$ and $k+1$.
The result follows.
\end{proof}
Suppose that $\bA$, $L$, $X$ are as above.
\begin{definition}
\label{def:10}

Let $P_a$ and $P_b$ be the endpoints of $L$, with $a \equiv b-1 \pmod{4}$.
The fragment of $L$ that lies between $P_a$ and $X$ is called a \emph{starting diagonal},
and the fragment of $L$ that lies between $X$ and $P_b$ is called an \emph{ending diagonal}.
\end{definition}
\begin{lemma}
\label{lem:11}
Let $\bA$ be a configuration and let $X$ be a point of $\bA$ that is neither a crossing point nor a meeting point.
The point $X$ belongs to one starting diagonal of $\bA$.
\end{lemma}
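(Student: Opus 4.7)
The plan is to examine the two components of $\bU \setminus \bA$ adjacent to $X$, invoke Lemma~\ref{lem:8} for each of them, and read off that exactly one of the two associated diagonals has $X$ on its starting fragment.

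First, I will argue that $X$ is adjacent to exactly two connected components $\bC_1, \bC_2$ of $\bU \setminus \bA$. Since $X$ is neither a crossing nor a meeting point, $\bA$ is locally a single piecewise-affine arc at $X$, so a small neighbourhood of $X$ is split by $\bA$ into two half-disks. These two half-disks must lie in different components, because in the simply connected disk $\bU$ a loop that crosses $\bA$ transversally only once cannot be contractible. For each $i$, I let $L_i$ denote the connected component of $\partial \bC_i \setminus \partial \bU$ containing $X$; this is a diagonal of $\bA$ by Definition~\ref{def:6}. By Lemma~\ref{lem:7}, $L_i$ carries exactly one crossing point $X_i$, and since $X \neq X_i$, the point $X$ lies strictly on either the starting or the ending fragment of $L_i$.

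Next, I will apply Lemma~\ref{lem:8} with $\bC = \bC_1$: the difference $\iota(\bC_2) - \iota(\bC_1)$ modulo $4$, which is forced to equal $\pm 1$ by that lemma, specifies whether $X$ sits on the starting fragment or on the ending fragment of $L_1$. Applying Lemma~\ref{lem:8} symmetrically with $\bC = \bC_2$ uses the quantity $\iota(\bC_1) - \iota(\bC_2)$ modulo $4$, i.e.\ the opposite sign, to determine the half of $L_2$ containing $X$. A short case analysis --- there are only two cases, one per possible sign of $\iota(\bC_2) - \iota(\bC_1)$ --- then shows that whenever $X$ lies on the starting half of $L_1$, it lies on the ending half of $L_2$, and vice versa. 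Hence $X$ belongs to exactly one starting diagonal of $\bA$.

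The main difficulty will be the careful bookkeeping of these cyclic indices and verifying that the two applications of Lemma~\ref{lem:8} produce complementary conclusions; a boundary subtlety also appears if $X = P_a$, but there $X$ is simply the common endpoint of exactly two diagonals of $\bA$, one having $P_a$ as its starting endpoint and the other having $P_a$ as its ending endpoint, so the same conclusion holds by direct inspection.
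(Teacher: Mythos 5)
Your proof is correct and follows essentially the same route as the paper's: identify the two adjacent components $\bC_1, \bC_2$, take the diagonals $L_1, L_2$ through $X$ in each, and apply Lemma~\ref{lem:8} to see that $X$ lies on the starting fragment of exactly one of them. The paper compresses your two-case analysis by normalizing $\iota(\bC_2) = \iota(\bC_1)+1$ without loss of generality (which Lemma~\ref{lem:8} justifies), and then directly reads off that $X$ is on the starting fragment of $L_1$ and the ending fragment of $L_2$; your explicit case split and your remark about the boundary case $X = P_a$ are extra bookkeeping that the paper leaves implicit, but the substance is the same.
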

\begin{proof}
Observe that $X$ is adjacent to two connected components $\bC_1$ and $\bC_2$ of $\bU \setminus \bA$.
Without loss of generality, and due to Lemma~\ref{lem:8}, we have $\iota(\bC_2) = \iota(\bC_1)+1$.
Let $L_1$ be the connected component of $\partial \bC_1 \setminus \partial \bU$ to which belongs $X$,
and let $L_2$ be defined similarly.
Then, $X$ belongs to the starting fragment of $L_1$ and to the ending fragment of $L_2$,
hence it belongs to a unique starting diagonal of $\bA$.
\end{proof}
\begin{definition}
\label{def:12}
Let $X$ be a crossing point of $\bA$.
For all $k \in \mathbb{Z}/4\mathbb{Z}$,
let $L_k$ be the starting diagonal adjacent to $X$ with an endpoint $P_a$ such that $a \equiv k \pmod{4}$.
The two lines $L_0 \cup L_2$ and $L_1 \cup L_3$ are called the \emph{canonical splitting lines} of $X$.
\end{definition}
\begin{proposition}
\label{pro:13}
Let $\bS$ be the collection of all canonical splitting lines of all the crossing points of $\bA$.
This collection is a splitting of $\bA$, which we call the \emph{canonical splitting} of $\bA$.
\end{proposition}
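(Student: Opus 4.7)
The plan is to verify the axioms of Definition~\ref{def:1} for the collection $\bS$ of canonical splitting lines.

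\textbf{Step 1 (cardinality and union).} Each crossing point $X$ of $\bA$ contributes two canonical splitting lines, $L_0^X \cup L_2^X$ and $L_1^X \cup L_3^X$. Distinct crossing points yield distinct canonical splitting lines: otherwise, one of the defining starting diagonals would contain two crossing points, contradicting Lemma~\ref{lem:7}. Hence $|\bS| = 2d$. By Lemma~\ref{lem:11}, every non-crossing, non-meeting point of $\bA$ lies on some starting diagonal, so the union of the lines of $\bS$ equals $\bA$, which is cycle-free by assumption. This takes care of the cycle-freeness condition.

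\textbf{Step 2 (boundary endpoints and parity).} Applying Lemma~\ref{lem:11} to a point of $\bA$ arbitrarily close to $P_a$, I would conclude that there is a unique starting diagonal with endpoint $P_a$; this starting diagonal belongs to exactly one canonical splitting line. Thus each $P_a$ lies on exactly one line of $\bS$, touching $\partial\bU$ only at its endpoints. By Definition~\ref{def:12}, the line $L_0^X \cup L_2^X$ has endpoints with residues $\{0,2\} \pmod 4$, so it is odd in the sense of Definition~\ref{def:1}, while $L_1^X \cup L_3^X$ has endpoints with residues $\{1,3\} \pmod 4$, hence even.

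\textbf{Step 3 (crossing condition).} At a crossing point $X$, Lemma~\ref{lem:9} gives the four adjacent components indices $0,1,2,3$, and Lemma~\ref{lem:8} shows that the starting diagonal $L_k^X$ emanates from $X$ along the arm separating the two components whose indices differ by $1$ around the residue $k$. Consequently, the four arms $L_0^X, L_1^X, L_2^X, L_3^X$ appear in cyclic order around $X$, with $L_k^X$ and $L_{k+2}^X$ on opposite sides. Hence $L_0^X \cup L_2^X$ and $L_1^X \cup L_3^X$ recover, locally at $X$, the two physical crossing lines of $\bA$ and cross each other transversally at $X$. Lemma~\ref{lem:7} then forces each canonical splitting line to contain exactly one crossing point, at which it crosses precisely one line of opposite parity.

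\textbf{Step 4 (behavior at meeting points).} Finally, at a meeting point $M$ I need to verify that the canonical splitting lines through $M$ only touch tangentially and that they all share a common parity. Non-crossing is automatic, since $\bA$ itself does not cross itself at $M$. The parity condition is the delicate part: using Lemma~\ref{lem:8}, the parity of a canonical splitting line containing an arm of $\bA$ at $M$ is determined by the residue modulo~$2$ of the index of one of the two adjacent components of $\bU\setminus\bA$. The cyclic sequence of indices of the components surrounding $M$, constrained by the fact that all branches of $\bA$ meeting at $M$ share a common tangent direction, forces all such residues to coincide modulo~$2$, and hence all canonical splitting lines through $M$ are of the same parity.

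The main obstacle is the same-parity verification in Step 4: one must carefully track how the residues of component indices propagate along starting diagonals as they traverse $M$ on their way back to their boundary endpoints $P_a$, then exploit the tangential structure at $M$ to conclude that these residues align into a single parity class. The other four conditions of Definition~\ref{def:1} then combine with Step 4 to prove that $\bS$ is indeed a splitting of $\bA$.
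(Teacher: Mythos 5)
Your overall plan mirrors the paper's: verify each axiom of Definition~\ref{def:1} for the collection $\bS$. Steps 1--3 are essentially sound (though the claim in Step~2 that a point near $P_a$ lies on a starting diagonal with endpoint $P_a$ deserves a sentence of justification, since Lemma~\ref{lem:11} says a point lies on one starting \emph{and} one ending diagonal, and only one of those has $P_a$ as its endpoint).

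The genuine gap is in Step~4, exactly where you flag ``the delicate part.'' Your justification appeals to ``the fact that all branches of $\bA$ meeting at $M$ share a common tangent direction,'' but this is false for the piecewise-affine disk model of Definition~\ref{def:1}/\ref{def:14}: lines meeting without crossing need not share a tangent, and even in the smooth polynomial picture the germ at a meeting point is $\{\mathrm{Re}(z^k)=0\}$, a union of $2k$ distinct rays, not a set of curves with a common tangent. So the constraint you invoke is not available, and the sketch does not establish that all canonical splitting lines through $M$ have the same parity. The paper instead observes that \emph{every connected component of $\bA\setminus\mathbf{C}$ contains only boundary points $P_a$ of a single parity mod~$2$} (where $\mathbf{C}$ is the set of crossing points), which follows because any pre-existing splitting of $\bA$ allows lines of opposite parity to meet only at crossing points, so such a component lies entirely in lines of one parity; since two starting diagonals touching at a meeting point $Y\notin\mathbf{C}$ lie in the same component of $\bA\setminus\mathbf{C}$, they have the same parity. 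Alternatively, one can extract the parity claim from Lemma~\ref{lem:8} alone: for each component $\bD$ adjacent to $M$, the diagonal of $\bD$ passing through $M$ has both of its arms at $M$ on the same side of its unique crossing point (since $M\notin\mathbf{C}$), hence both arms carry the same index pair, which forces the cyclic index sequence around $M$ to alternate as $j,j{+}1,j,j{+}1,\ldots$ and therefore gives every arm the same parity. Either route closes the gap; the ``tangent direction'' heuristic does not.
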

\begin{proof}
First, every line $L \in \bS$ has endpoints $P_a$ and $P_b$ such that either $\{a,b\} \equiv \{0,2\} \pmod{4}$
or $\{a,b\} \equiv \{1,3\} \pmod{4}$, hence it is either odd or even.
Furthermore, $L$ is a canonical splitting line of some crossing point $X \in \mathbf{C}$,
at which it crosses another line $L' \in \bS$, with the opposite parity.
Second, let $\mathbf{C}$ be the set of all crossing points of $\bA$.
Since starting diagonals never cross each other, none of the starting diagonals $L$ is made of
can cross another line $L'' \in \bS$ at a point $Y \notin \mathbf{C}$.
Hence, $L'$ is the only line in $\bS$ that crosses $L$ (which it does at $X$ only).
Third, observe that every connected component of $\bA \setminus \mathbf{C}$
either contains only points $P_a$ with $a \equiv 0 \pmod{2}$ or
contains only points $P_a$ with $a \equiv 1 \pmod{2}$.
Consequently, whenever two starting diagonals touch each other
at a point $Y \notin \mathbf{C}$,
in addition to not crossing each other, they must have the same parity.
Finally, Lemma~\ref{lem:11} shows that $\bA$ is the union of all the lines in $\bS$.
Recalling that $\bA$ is cycle-free completes the proof.
\end{proof}

\section{Counting A'Campo forests}
\label{section:3}
Canonical splittings of configurations pave the way to recursive decompositions of configurations,
leading to the enumeration of our graphs.
However, taking into account meeting points leads us to introduce the following combinatorial objects.
\begin{definition}
\label{def:14}
Let $d$ be a non-negative integer and consider the points $P_{-1}, P_0, \ldots, P_{4d}$ on the boundary of the disk $\partial \bU$.
Take the $2d+1$ piecewise-affine lines embedded in the unit disk $\bU$.
We partition the set of lines into $d$ \emph{odd}, $d$ \emph{even} lines and one \emph{base} line. These lines verify the following conditions:
\begin{itemize}
 \item every point $P_a$ belongs to one line, and every line touches $\partial \bU$ at its endpoints only;
 \item the base line joins the points $P_{-1}$ and $P_{4d}$;
 \item every odd (respectively, even) line joins points $P_a$ and $P_b$ such that $\{a,b\} \equiv \{0,2\} \pmod{4}$ (respectively, $\{a,b\} \equiv \{1,3\} \pmod{4}$);
 \item every line touches one line of the opposite parity, at a point that we call \emph{crossing point}, and it must cross that line at that point;
 \item two lines of the same parity may touch each other at some point, which we call \emph{meeting point}, and they may not cross each other at that point;
 \item the base line may touch even lines only, and may not cross them;
 \item the union of these $2d+1$ lines is cycle-free.
\end{itemize}
The union of these $2d+1$ lines is called a \emph{partial configuration} of degree $d$, and the
collection of these $2d+1$ lines is called a \emph{splitting} of the partial configuration.
If, furthermore, the points $P_{-1}$ and $P_{4d-1}$ belong to the same connected component of the partial configuration,
then the partial configuration is said to be \emph{widespread}.
\end{definition}
Like in the case of configurations,
the \emph{codimension} of a partial configuration is the sum of the codimensions of its meeting points,
and two partial configurations $\bA$ and $\bB$ are \emph{equivalent} if some homeomorphism of the unit disk
maps $\bA$ to $\bB$ and maps each point $P_a$ to itself.
Hereafter, configurations and partial configurations are only considered up to equivalence.
We denote by $N_1(c,d)$ (respectively, $N_2(c,d)$ and $N_3(c,d)$) the set of configurations
(respectively, partial configurations and widespread partial configurations)
with codimension $c$ and degree $d$.
By extension, for $i \in \{1,2,3\}$,
we also denote by $N_i$ the set $\bigcup_{c,d \geq 0} N_i(c,d)$
and by $\bN_i$ the associated bivariate generating function, defined by:
\[\bN_i(x,y) = \sum_{c,d \geq 0} \#N_i(c,d) x^c y^d.\]
We investigate now recursive decompositions of (standard, partial) configurations,
which will give rise to equations involving the generating functions $\bN_i$.
\begin{lemma}
\label{lem:15}
We have $\bN_1 = 1 + y \bN_2^4$.
Furthermore, we can associate unambiguously every partial configuration with a splitting,
which we call \emph{canonical} splitting of this configuration.
\end{lemma}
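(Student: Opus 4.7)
The relation $\bN_1 = 1 + y \bN_2^4$ follows from a bijection. The summand $1$ captures the empty configuration of degree $0$ and codimension $0$, while $y \bN_2^4$ captures the decomposition of any configuration of degree $d \geq 1$ into a distinguished central crossing $X$ together with four partial configurations whose degrees sum to $d-1$ and whose codimensions sum to the codimension of the original configuration (the factor $y$ accounts for the distinguished crossing $X$).

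The decomposition goes as follows. Given a configuration $\bA$ of degree $d \geq 1$, let $L_{\text{od}}$ denote the unique line of its canonical splitting (Proposition \ref{pro:13}) incident to $P_0$, and let $X$ be the unique crossing point lying on $L_{\text{od}}$ (Lemma \ref{lem:7}). Let $L_{\text{ev}}$ denote the even canonical splitting line crossing $L_{\text{od}}$ at $X$, and let $S_0, S_1, S_2, S_3$ be the four starting diagonals meeting at $X$, with $S_k$ ending at a boundary point $P_{a_k}$ such that $a_k \equiv k \pmod 4$ (Definition \ref{def:12}). The union $L_{\text{od}} \cup L_{\text{ev}}$ partitions $\bU$ into four topological sectors $Q_0, Q_1, Q_2, Q_3$, where $Q_k$ is bounded by an arc of $\partial \bU$ from $P_{a_k}$ to $P_{a_{k+1}}$ together with $S_k \cup S_{k+1}$. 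Each $Q_k$ is identified with a partial configuration $\bA_k$: after relabeling its boundary points (with an orientation chosen according to the parity of $k$, so as to align the residues modulo $4$), $P_{a_k}$ and $P_{a_{k+1}}$ assume the roles of $P_{-1}$ and $P_{4d_k}$, the concatenation $S_k \cup S_{k+1}$ becomes the base line of $\bA_k$, and the lines of $\bA$ lying entirely in $Q_k$ become the odd and even lines of $\bA_k$. Summing gives $d = 1 + \sum_k d_k$ and $c = \sum_k c_k$, and the inverse construction glues four partial configurations around a common new crossing point, fusing their four base lines into two canonical splitting lines.

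The second claim, that every partial configuration admits a canonical splitting, is obtained by transporting the construction of Section \ref{section:2} verbatim: the base line of the partial configuration is treated as a prescribed line of the sought splitting, and the canonical splitting lines of its remaining crossings are defined exactly as in Definition \ref{def:12}; Lemmas \ref{lem:5}--\ref{lem:11} and Proposition \ref{pro:13} carry through in this adapted setting. The main obstacle is to verify the rules of Definition \ref{def:14} for the four sectoral partial configurations $\bA_k$ --- particularly the condition that the base line of $\bA_k$ touches only even lines. Using the index structure of the components of $\bU \setminus \bA$ near $X$ provided by Lemmas \ref{lem:8} and \ref{lem:9}, one argues that every meeting point lying on a starting diagonal $S_k$ has its incident lines on the specific side of $S_k$ where, after the chosen reindexing of $Q_k$, these lines are classified as even lines touching the base of $\bA_k$.
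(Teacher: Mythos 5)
Your proof follows essentially the same route as the paper: you decompose a configuration of positive degree at the distinguished crossing $X$ of the canonical splitting line through $P_0$, identify the four sectors with partial configurations, and obtain the canonical splitting of a partial configuration by adapting the Section~\ref{section:2} machinery. This matches the paper's argument in spirit and in most details; where you are more explicit than the paper (for instance, spelling out that $S_k \cup S_{k+1}$ becomes the base line and appealing to the index structure of Lemmas~\ref{lem:8}--\ref{lem:9} to argue that meeting points on the base line only touch \emph{even} lines of $\bA_k$), this is a genuine gap in the paper's exposition that you were right to flag, since the paper merely says ``we observe that $\bA \cap \overline{\bC_k}$ is a partial configuration.''

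Two points of divergence are worth noting. First, your ``orientation chosen according to the parity of~$k$'' is not what the paper does: the paper uses the single uniform relabelling $P_x \mapsto P_{x-a_k-1}$ for every sector, and accepts that the odd/even labels of $\bA_k$ are \emph{not} those of $\bA$ (they coincide only for $k$ odd and are swapped for $k$ even). If you intend a parity-dependent reflection so as to preserve odd/even labels globally, you should check that this is still a well-defined bijection onto $N_2^4$ and that the base-line constraint of Definition~\ref{def:14} survives the reflection; as stated, the claim is vague, and a reflection applied only for one parity of $k$ does not in fact restore the base-line constraint on both fragments $S_k$ and $S_{k+1}$ simultaneously, so this point needs to be either dropped (following the paper's uniform shift) or made precise and re-verified. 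Second, for the canonical splitting of a partial configuration the paper takes a slicker route than yours: rather than re-proving Lemmas~\ref{lem:5}--\ref{lem:11} and Proposition~\ref{pro:13} in the ``disk with a base line'' setting, it glues $\bB$ with three trivial partial configurations via $\varphi^{-1}(\bB,\bB_\emptyset,\bB_\emptyset,\bB_\emptyset)$ to obtain a genuine configuration, applies Proposition~\ref{pro:13} there, and restricts. Your direct adaptation is workable but costs more verification; the paper's reduction buys this for free.
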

\begin{proof}
We design a bijection $\varphi : N_1 \mapsto \{\emptyset\} \cup N_2^4$,
such that $\varphi(\bA) = \emptyset$ if $\bA$ has degree $0$,
and such that $\varphi(\bA) = (\bA_0,\bA_1,\bA_2,\bA_3)$ where
$\deg \bA = \sum_{i=0}^3 \deg \bA_i + 1$ and
$\codim{\bA} = \sum_{i=0}^3 \codim{\bA_i}$ if $\bA$ has degree at least $1$.
First, there exists one configuration $\bA$ of degree $0$, hence we safely associate it with $\emptyset$.

Then, if $\bA$ has degree at least $1$, let $\bS$ be the canonical splitting of $\bA$.
Let $L_1$ be the line of $\bS$ with endpoint $P_0$, let $L_2$ be the line of $\bS$ that crosses $L_1$,
and let $X$ be the crossing point at which $L_1$ and $L_2$ cross each other.
Observe that $L_1$ has two endpoints $P_{a_0}$ and $P_{a_2}$, and $L_2$ has two endpoints $P_{a_1}$ and $P_{a_3}$, with
and $a_k \equiv k \pmod{4}$ for all $k$. It is straightforward that $0 = a_0 < a_1 < a_2 < a_3 < a_4 = 4d$.
Furthermore, the set $\bU \setminus (L_1 \cup L_2)$ is formed of $4$ connected components $\bC_0, \ldots, \bC_3$,
where $\bC_k$ contains the arc of circle $[P_{a_k},P_{a_{k+1}}]$ on its border.
For $0 \leq k \leq 3$, we do the following.

We delete all the points $P_x$ that do not belong to the arc of circle $[P_{a_k},P_{a_{k+1}}]$
then we renumber each of the points $P_x$ (with $a_k \leq x \leq a_{k+1}$) to $P_{x-a_k-1}$.
Doing so, we observe that $\bA \cap \overline{\bC_k}$ is a partial configuration, which we denote by $\bA_k$.
Hence, we define $\varphi(\bA)$ as the tuple $(\bA_0,\bA_1,\bA_2,\bA_3)$,
as illustrated in Fig.~\ref{fig:2}.
By construction, every crossing point (beside the point $X$) and every meeting point of $\bA$
belongs to one partial configuration $\bA_k$.
Furthermore, the mapping $\varphi$ is clearly bijective.
This proves the equality $\bN_1 = 1 + y \bN_2^4$.

Furthermore, let $\bB$ be a partial decomposition and let $\bB_\emptyset$ be the unique partial decomposition of degree $0$.
The canonical splitting of
$\varphi^{-1}(\bB,\bB_\emptyset,\bB_\emptyset,\bB_\emptyset)$ induces a splitting of $\bB$,
which is the above-mentioned canonical splitting of $\bB$.
\end{proof}
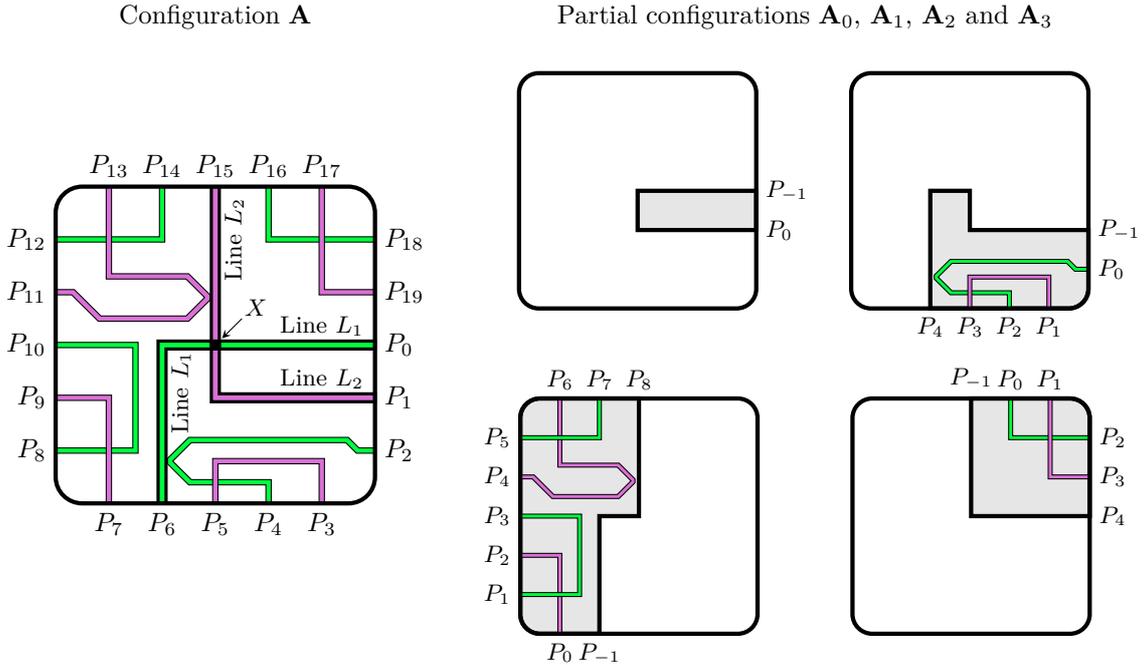
\begin{figure}[!ht]
\begin{center}
\begin{tabular}{c}
Configuration $\bA$\\[15mm]
\begin{tikzpicture}[scale=0.35,>=stealth]
\draw[fill=erin] (6,-3.9) -- (5.4,-3.9) -- (5,-3.5) -- (-1,-3.5) -- (-1.8,-4.3) -- (-1.8,-4.5) -- (-1,-5.3) -- (1.9,-5.3) -- (1.9,-6) --
(2.1,-6) -- (2.1,-5.1) -- (-0.9,-5.1) -- (-1.6,-4.4) -- (-0.9,-3.7) -- (4.9,-3.7) -- (5.3,-4.1) -- (6,-4.1) -- cycle;
\draw[fill=erin] (-6,-4.1) -- (-2.9,-4.1) -- (-2.9,0.1) -- (-6,0.1) -- (-6,-0.1) -- (-3.1,-0.1) -- (-3.1,-3.9) -- (-6,-3.9) -- cycle;
\draw[fill=erin] (-6,3.9) -- (-1.9,3.9) -- (-1.9,6) -- (-2.1,6) -- (-2.1,4.1) -- (-6,4.1) -- cycle;
\draw[fill=erin] (6,3.9) -- (1.9,3.9) -- (1.9,6) -- (2.1,6) -- (2.1,4.1) -- (6,4.1) -- cycle;
\draw[fill=orchid] (-6,1.9) -- (-5.4,1.9) -- (-4.4,0.9) -- (-1,0.9) -- (-0.2,1.7) -- (-0.2,1.9) -- (-1,2.7) -- (-3.9,2.7) -- (-3.9,6) --
(-4.1,6) -- (-4.1,2.5) -- (-1.1,2.5) -- (-0.4,1.8) -- (-1.1,1.1) -- (-4.3,1.1) -- (-5.3,2.1) -- (-6,2.1) -- cycle;
\draw[fill=orchid] (0.1,-6) -- (0.1,-4.5) -- (3.9,-4.5) -- (3.9,-6) -- (4.1,-6) -- (4.1,-4.3) -- (-0.1,-4.3) -- (-0.1,-6) -- cycle;
\draw[fill=orchid] (-3.9,-6) -- (-3.9,-1.9) -- (-6,-1.9) -- (-6,-2.1) -- (-4.1,-2.1) -- (-4.1,-6) -- cycle;
\draw[fill=orchid] (3.9,6) -- (3.9,1.9) -- (6,1.9) -- (6,2.1) -- (4.1,2.1) -- (4.1,6) -- cycle;
\draw[fill=erin,very thick] (6,-0.15) -- (-1.85,-0.15) -- (-1.85,-6) -- (-2.15,-6) -- (-2.15,0.15) -- (6,0.15) -- cycle;
\draw[fill=orchid,very thick] (6,-1.85) -- (0.15,-1.85) -- (0.15,6) -- (-0.15,6) -- (-0.15,-2.15) -- (6,-2.15) -- cycle;
\draw[ultra thick] (-5,-6) arc (-90:-180:1) -- (-6,5) arc (180:90:1) -- (5,6) arc (90:0:1) -- (6,-5) arc (0:-90:1) -- cycle;
\draw[fill=black,very thick] (-0.15,-0.15) -- (-0.15,0.15) -- (0.15,0.15) -- (0.15,-0.15) -- cycle;
\draw[->,>=stealth] (1,1) -- (0.25,0.25);
\node[anchor=south west] at (0.75,0.75) {\small $X$};
\node[anchor=south] at (4,0) {\small Line $L_1$};
\node[anchor=south] at (4,-2) {\small Line $L_2$};
\node[anchor=north,rotate=90] at (0,4) {\small Line $L_2$};
\node[anchor=north,rotate=90] at (-2,-1.8) {\small Line $L_1$};
\node[anchor=west] at (6,0) {$P_0$};
\node[anchor=west] at (6,-2) {$P_1$};
\node[anchor=west] at (6,-4) {$P_2$};
\node[anchor=north] at (4,-6) {$P_3$};
\node[anchor=north] at (2,-6) {$P_4$};
\node[anchor=north] at (0,-6) {$P_5$};
\node[anchor=north] at (-2,-6) {$P_6$};
\node[anchor=north] at (-4,-6) {$P_7$};
\node[anchor=east] at (-6,-4) {$P_8$};
\node[anchor=east] at (-6,-2) {$P_9$};
\node[anchor=east] at (-6,0) {$P_{10}$};
\node[anchor=east] at (-6,2) {$P_{11}$};
\node[anchor=east] at (-6,4) {$P_{12}$};
\node[anchor=south] at (-4,6) {$P_{13}$};
\node[anchor=south] at (-2,6) {$P_{14}$};
\node[anchor=south] at (0,6) {$P_{15}$};
\node[anchor=south] at (2,6) {$P_{16}$};
\node[anchor=south] at (4,6) {$P_{17}$};
\node[anchor=west] at (6,4) {$P_{18}$};
\node[anchor=west] at (6,2) {$P_{19}$};
\end{tikzpicture}
\\[17.2mm]
\end{tabular}
\!\!\!\!\!
\begin{tabular}{cc}
\multicolumn{2}{c}{Partial configurations $\bA_0$, $\bA_1$, $\bA_2$ and $\bA_3$}\\
\begin{tikzpicture}[scale=0.26]
\draw[ultra thick,fill=black!10] (6,0) -- (0,0) -- (0,-2) -- (6,-2) -- cycle;
\draw[ultra thick] (-5,-6) arc (-90:-180:1) -- (-6,5) arc (180:90:1) -- (5,6) arc (90:0:1) -- (6,-5) arc (0:-90:1) -- cycle;
\node[anchor=north] at (-4,-6) {\color{white}$P_7$};
\node[anchor=east] at (-6,4) {\color{white}$P_{12}$};
\node[anchor=south] at (-4,6) {\color{white}$P_{13}$};
\node[anchor=west] at (6,4) {\color{white}$P_{18}$};
\node[anchor=west] at (6,0) {\small$P_{-1}$};
\node[anchor=west] at (6,-2) {\small$P_0$};
\end{tikzpicture}
&
\begin{tikzpicture}[scale=0.26]
\draw[ultra thick,fill=black!10] (6,-2) -- (0,-2) -- (0,0) -- (-2,0) -- (-2,-6) -- (5,-6) arc (-90:0:1) -- cycle;
\draw[fill=erin] (6,-3.9) -- (5.4,-3.9) -- (5,-3.5) -- (-1,-3.5) -- (-1.8,-4.3) -- (-1.8,-4.5) -- (-1,-5.3) -- (1.9,-5.3) -- (1.9,-6) --
(2.1,-6) -- (2.1,-5.1) -- (-0.9,-5.1) -- (-1.6,-4.4) -- (-0.9,-3.7) -- (4.9,-3.7) -- (5.3,-4.1) -- (6,-4.1) -- cycle;
\draw[fill=orchid] (0.1,-6) -- (0.1,-4.5) -- (3.9,-4.5) -- (3.9,-6) -- (4.1,-6) -- (4.1,-4.3) -- (-0.1,-4.3) -- (-0.1,-6) -- cycle;
\draw[ultra thick] (-5,-6) arc (-90:-180:1) -- (-6,5) arc (180:90:1) -- (5,6) arc (90:0:1) -- (6,-5) arc (0:-90:1) -- cycle;
\node[anchor=north] at (-4,-6) {\color{white}$P_7$};
\node[anchor=south] at (-4,6) {\color{white}$P_{13}$};
\node[anchor=west] at (6,4) {\color{white}$P_{18}$};
\node[anchor=west] at (6,-2) {\small$P_{-1}$};
\node[anchor=west] at (6,-4) {\small$P_0$};
\node[anchor=north] at (4,-6) {\small$P_1$};
\node[anchor=north] at (2,-6) {\small$P_2$};
\node[anchor=north] at (0,-6) {\small$P_3$};
\node[anchor=north] at (-2,-6) {\small$P_4$};
\end{tikzpicture}
\\
\begin{tikzpicture}[scale=0.26]
\draw[ultra thick,fill=black!10] (-2,-6) -- (-2,0) -- (0,0) -- (0,6) -- (-5,6) arc (90:180:1) -- (-6,-5) arc (-180:-90:1) -- cycle;
\draw[fill=orchid] (-6,1.9) -- (-5.4,1.9) -- (-4.4,0.9) -- (-1,0.9) -- (-0.2,1.7) -- (-0.2,1.9) -- (-1,2.7) -- (-3.9,2.7) -- (-3.9,6) --
(-4.1,6) -- (-4.1,2.5) -- (-1.1,2.5) -- (-0.4,1.8) -- (-1.1,1.1) -- (-4.3,1.1) -- (-5.3,2.1) -- (-6,2.1) -- cycle;
\draw[fill=orchid] (-3.9,-6) -- (-3.9,-1.9) -- (-6,-1.9) -- (-6,-2.1) -- (-4.1,-2.1) -- (-4.1,-6) -- cycle;
\draw[fill=erin] (-6,-4.1) -- (-2.9,-4.1) -- (-2.9,0.1) -- (-6,0.1) -- (-6,-0.1) -- (-3.1,-0.1) -- (-3.1,-3.9) -- (-6,-3.9) -- cycle;
\draw[fill=erin] (-6,3.9) -- (-1.9,3.9) -- (-1.9,6) -- (-2.1,6) -- (-2.1,4.1) -- (-6,4.1) -- cycle;
\draw[ultra thick] (-5,-6) arc (-90:-180:1) -- (-6,5) arc (180:90:1) -- (5,6) arc (90:0:1) -- (6,-5) arc (0:-90:1) -- cycle;
\node[anchor=north] at (-4,-6) {\color{white}$P_7$};
\node[anchor=east] at (-6,4) {\color{white}$P_{12}$};
\node[anchor=south] at (-4,6) {\color{white}$P_{13}$};
\node[anchor=west] at (6,4) {\color{white}$P_{18}$};
\node[anchor=north] at (-2,-6) {\small$P_{-1}$};
\node[anchor=north] at (-4,-6) {\small$P_0$};
\node[anchor=east] at (-6,-4) {\small$P_1$};
\node[anchor=east] at (-6,-2) {\small$P_2$};
\node[anchor=east] at (-6,0) {\small$P_3$};
\node[anchor=east] at (-6,2) {\small$P_4$};
\node[anchor=east] at (-6,4) {\small$P_5$};
\node[anchor=south] at (-4,6) {\small$P_6$};
\node[anchor=south] at (-2,6) {\small$P_7$};
\node[anchor=south] at (0,6) {\small$P_8$};
\end{tikzpicture}
&
\begin{tikzpicture}[scale=0.26]
\draw[ultra thick,fill=black!10] (6,0) -- (0,0) -- (0,6) -- (5,6) arc (90:0:1) -- cycle;
\draw[fill=erin] (6,3.9) -- (1.9,3.9) -- (1.9,6) -- (2.1,6) -- (2.1,4.1) -- (6,4.1) -- cycle;
\draw[fill=orchid] (3.9,6) -- (3.9,1.9) -- (6,1.9) -- (6,2.1) -- (4.1,2.1) -- (4.1,6) -- cycle;
\draw[ultra thick] (-5,-6) arc (-90:-180:1) -- (-6,5) arc (180:90:1) -- (5,6) arc (90:0:1) -- (6,-5) arc (0:-90:1) -- cycle;
\node[anchor=north] at (-4,-6) {\color{white}$P_7$};
\node[anchor=south] at (-4,6) {\color{white}$P_{13}$};
\node[anchor=west] at (6,4) {\color{white}$P_{18}$};
\node[anchor=west] at (6,0) {\small$P_4$};
\node[anchor=south] at (0,6) {\small$P_{-1}$};
\node[anchor=south] at (2,6) {\small$P_0$};
\node[anchor=south] at (4,6) {\small$P_1$};
\node[anchor=west] at (6,4) {\small$P_2$};
\node[anchor=west] at (6,2) {\small$P_3$};
\end{tikzpicture}
\end{tabular}
\caption{Splitting a configuration into a collection of partial configurations}
\end{center}
\label{fig:2}
\end{figure}
\begin{lemma}
\label{lem:16}
We have $\bN_2 = \bN_1 \bN_3$.
\end{lemma}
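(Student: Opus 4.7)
My plan is to construct an explicit bijection $\psi : N_2 \to N_1 \times N_3$ preserving both the degree and the codimension; summing over all $c, d$ then yields the identity $\bN_2 = \bN_1 \bN_3$. Given $\bA \in N_2$ of degree $d$, I will let $T$ denote the connected component of $\bA$ that contains the base line $B$: a tree embedded in $\bU$ whose leaves on $\partial \bU$, listed clockwise from $P_{-1}$, form a sequence $P_{l_0} = P_{-1}, P_{l_1}, \ldots, P_{l_{s+1}} = P_{4d}$. These leaves cut $\partial \bU$ into arcs; each arc, together with a path through $T$, bounds a region I will call a \emph{pocket}, and, because a planar forest has no crossings between distinct components, every component of $\bA$ other than $T$ lies entirely inside a single pocket.

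The crux is the following structural observation, which I plan to prove first: each pocket contains equally many odd and even lines of $\bA$, because the unique crossing partner of an odd line belongs to the same connected component and hence to the same pocket. It follows that the number of boundary points in the $i$-th pocket equals $4 d^{(i)}$ for some integer $d^{(i)} \geq 0$, so $l_{i+1} - l_i \equiv 1 \pmod{4}$; combined with $l_0 = -1 \equiv 3 \pmod{4}$, this forces $l_i \equiv i + 3 \pmod{4}$ and in particular $l_s \equiv 3 \pmod{4}$.

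I will then define $\psi(\bA) = (C, W)$ by peeling off the last pocket. Set $d_1 = (4d - l_s - 1)/4$ and $d_2 = d - d_1$. Renumbering the points in the arc $(P_{l_s}, P_{4d})$ as $P_0, \ldots, P_{4 d_1 - 1}$ turns the sub-configuration they carry into a standard configuration $C \in N_1$, with parities preserved because the renumbering shift $-l_s - 1$ is a multiple of $4$. Relabeling $P_{-1}, P_0, \ldots, P_{l_s}, P_{4d}$ as $R_{-1}, R_0, \ldots, R_{4 d_2 - 1}, R_{4 d_2}$ turns the remainder into a widespread partial configuration $W \in N_3$: nothing that was removed touched $B$, $T$ is still present intact, and $R_{-1}$ and the relabeled $P_{l_s} = R_{4 d_2 - 1}$ lie in the same component through $T$. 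The widespread case $\bA \in N_3$ is the degenerate subcase $l_s = 4d - 1$, in which $d_1 = 0$ and $C$ is the trivial configuration. The inverse map opens a fresh pocket of $W$ between $R_{4 d_2 - 1}$ and $R_{4 d_2}$ and draws $C$ inside it.

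The hard step is the balanced-pocket claim, on which everything else rests; once it is established, the arithmetic $l_i \equiv i + 3 \pmod{4}$, parity preservation under renumbering, the verification that $W$ really is a widespread partial configuration, and the identities $\deg \bA = \deg C + \deg W$ and $\codim{\bA} = \codim{C} + \codim{W}$ all reduce to direct checks against Definition~\ref{def:14}.
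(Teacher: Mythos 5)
Your proposal is correct and takes essentially the same approach as the paper: both peel off the final region between the last boundary point $P_{l_s}$ (the paper's $P_a$) of the base-line component and $P_{4d}$, turning the inside into a configuration and the remainder into a widespread partial configuration. Your ``balanced pocket'' observation just makes explicit the paper's parenthetical ``whence $a \equiv 3 \pmod 4$,'' and the ordering $(C,W)$ versus the paper's $(\bA\setminus\mathcal D,\bA\cap\mathcal D)$ is immaterial.
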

\begin{proof}
We design a bijection $\varphi : N_2 \mapsto N_3 \times N_1$,
such that $\varphi(\bA) = (\bA_0,\bA_1)$ where
$\deg \bA = \deg \bA_0 + \deg \bA_1$ and
$\codim{\bA} = \codim {\bA_0} + \codim {\bA_1}$ for all partial configurations $\bA$.

Let $\bA$ be a partial configuration of degree $d$ and let $\bC$ be the connected component of $\bA$
that contains the point $P_{-1}$. Let $a$ be the greatest element of $\{-1,\ldots,4d-1\}$
such that $P_a \in \bC$. Furthermore, let $\mathcal{D}$ be the unique connected component of $\bU \setminus \bC$
whose border contains the arc of circle $[P_a,P_{4d}]$.
First, deleting the points $P_x$ that do not belong to the arc of circle $[a+1,4d-1]$
and renaming every point $P_x$ (with $a < x < 4d$ to $P_{x-a-1}$,
we observe that $\bA \cap \mathcal{D}$ is a configuration (whence $a \equiv 3 \pmod{4}$).

Second, deleting the points $P_x$ with $a < x < 4d$ and renaming the point $P_{4d}$ to $P_{a+1}$,
we also observe that $\bA \setminus \mathcal{D}$ is a widespread partial configuration.
Hence, we define $\varphi(\bA)$ as the pair $(\bA \setminus \mathcal{D},\bA \cap \mathcal{D})$,
as illustrated in Fig.~\ref{fig:3}.
By construction, every crossing point or meeting point of $\bA$ belongs to either
$\bA \setminus \mathcal{D}$ or $\bA \cap \mathcal{D}$, and $\varphi$ is clearly bijective.
Lemma~\ref{lem:16} follows.
\end{proof}
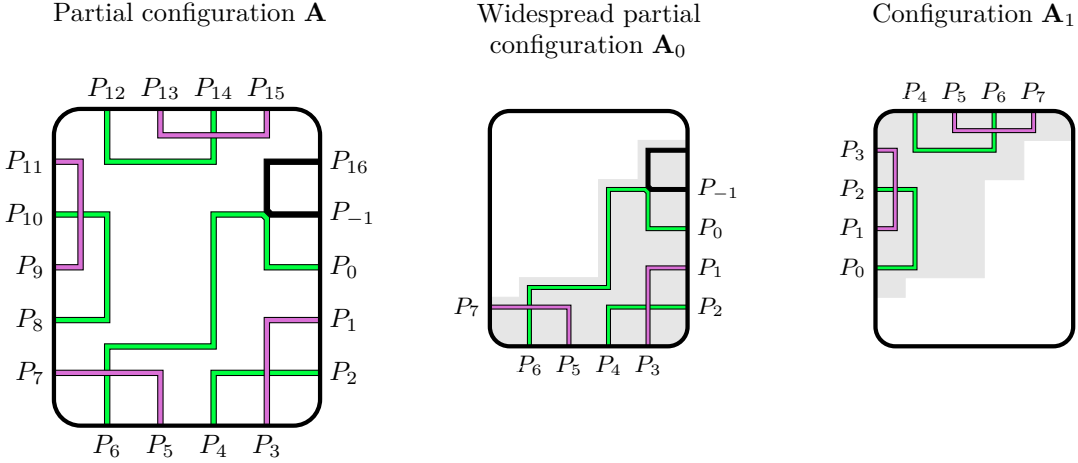
\begin{figure}[!ht]
\begin{center}
\begin{tabular}{c}
Partial configuration $\bA$\\[5mm]
\begin{tikzpicture}[scale=0.35,>=stealth]
\draw[fill=black] (5,1.9) -- (3.1,1.9) -- (2.9,2.1) -- (2.9,4.1) -- (5,4.1) -- (5,3.9) -- (3.1,3.9) -- (3.1,2.2) -- (3.2,2.1) -- (5,2.1) -- cycle;
\draw[fill=erin] (5,0.1) -- (3.1,0.1) -- (3.1,1.9) -- (2.9,2.1) -- (0.9,2.1) -- (0.9,-2.9) -- (-3.1,-2.9) --
(-3.1,-6) -- (-2.9,-6) -- (-2.9,-3.1) -- (1.1,-3.1) -- (1.1,1.9) -- (2.8,1.9) -- (2.9,1.8) -- (2.9,-0.1) -- (5,-0.1) -- cycle;
\draw[fill=erin] (5,-3.9) -- (0.9,-3.9) -- (0.9,-6) -- (1.1,-6) -- (1.1,-4.1) -- (5,-4.1) -- cycle;
\draw[fill=erin] (-3.1,6) -- (-3.1,3.9) -- (1.1,3.9) -- (1.1,6) -- (0.9,6) -- (0.9,4.1) -- (-1.1,4.1) -- (-2.9,4.1) -- (-2.9,6) -- cycle;
\draw[fill=erin] (-5,2.1) -- (-2.9,2.1) -- (-2.9,-2.1) -- (-5,-2.1) -- (-5,-1.9) -- (-3.1,-1.9) -- (-3.1,1.9) -- (-5,1.9) -- cycle;
\draw[fill=orchid] (5,-1.9) -- (2.9,-1.9) -- (2.9,-6) -- (3.1,-6) -- (3.1,-2.1) -- (5,-2.1) -- cycle;
\draw[fill=orchid] (-0.9,-6) -- (-0.9,-3.9) -- (-5,-3.9) -- (-5,-4.1) -- (-1.1,-4.1) -- (-1.1,-6) -- cycle;
\draw[fill=orchid] (-5,-0.1) -- (-3.9,-0.1) -- (-3.9,4.1) -- (-5,4.1) -- (-5,3.9) -- (-4.1,3.9) -- (-4.1,0.1) -- (-5,0.1) -- cycle;
\draw[fill=orchid] (3.1,6) -- (3.1,4.9) -- (-1.1,4.9) -- (-1.1,6) -- (-0.9,6) -- (-0.9,5.1) -- (2.9,5.1) -- (2.9,6) -- cycle;
\draw[ultra thick] (-4,-6) arc (-90:-180:1) -- (-5,5) arc (180:90:1) -- (4,6) arc (90:0:1) -- (5,-5) arc (0:-90:1) -- cycle;
\node[anchor=west] at (5,0) {$P_0$};
\node[anchor=west] at (5,-2) {$P_1$};
\node[anchor=west] at (5,-4) {$P_2$};
\node[anchor=north] at (3,-6) {$P_3$};
\node[anchor=north] at (1,-6) {$P_4$};
\node[anchor=north] at (-1,-6) {$P_5$};
\node[anchor=north] at (-3,-6) {$P_6$};
\node[anchor=east] at (-5,-4) {$P_7$};
\node[anchor=east] at (-5,-2) {$P_8$};
\node[anchor=east] at (-5,0) {$P_9$};
\node[anchor=east] at (-5,2) {$P_{10}$};
\node[anchor=east] at (-5,4) {$P_{11}$};
\node[anchor=south] at (-3,6) {$P_{12}$};
\node[anchor=south] at (-1,6) {$P_{13}$};
\node[anchor=south] at (1,6) {$P_{14}$};
\node[anchor=south] at (3,6) {$P_{15}$};
\node[anchor=west] at (5,4) {$P_{16}$};
\node[anchor=west] at (5,2) {$P_{-1}$};
\end{tikzpicture}
\end{tabular}
\!\!\!\!\!
\begin{tabular}{cc}
Widespread partial & Configuration $\bA_1$ \\
configuration $\bA_0$ & \\[1mm]
\begin{tikzpicture}[scale=0.26]
\draw[draw=black!10,fill=black!10] (5,4.5) -- (2.5,4.5) -- (2.5,2.5) -- (0.5,2.5) -- (0.5,-2.5) -- (-3.5,-2.5) --
(-3.5,-3.5) -- (-5,-3.5) -- (-5,-5) arc (-180:-90:1) -- (4,-6) arc (-90:0:1) -- cycle;
\draw[fill=black] (5,1.9) -- (3.1,1.9) -- (2.9,2.1) -- (2.9,4.1) -- (5,4.1) -- (5,3.9) -- (3.1,3.9) -- (3.1,2.2) -- (3.2,2.1) -- (5,2.1) -- cycle;
\draw[fill=erin] (5,0.1) -- (3.1,0.1) -- (3.1,1.9) -- (2.9,2.1) -- (0.9,2.1) -- (0.9,-2.9) -- (-3.1,-2.9) --
(-3.1,-6) -- (-2.9,-6) -- (-2.9,-3.1) -- (1.1,-3.1) -- (1.1,1.9) -- (2.8,1.9) -- (2.9,1.8) -- (2.9,-0.1) -- (5,-0.1) -- cycle;
\draw[fill=erin] (5,-3.9) -- (0.9,-3.9) -- (0.9,-6) -- (1.1,-6) -- (1.1,-4.1) -- (5,-4.1) -- cycle;
\draw[fill=orchid] (5,-1.9) -- (2.9,-1.9) -- (2.9,-6) -- (3.1,-6) -- (3.1,-2.1) -- (5,-2.1) -- cycle;
\draw[fill=orchid] (-0.9,-6) -- (-0.9,-3.9) -- (-5,-3.9) -- (-5,-4.1) -- (-1.1,-4.1) -- (-1.1,-6) -- cycle;
\draw[ultra thick] (-4,-6) arc (-90:-180:1) -- (-5,5) arc (180:90:1) -- (4,6) arc (90:0:1) -- (5,-5) arc (0:-90:1) -- cycle;
\node[anchor=west] at (5,0) {\small$P_0$};
\node[anchor=west] at (5,-2) {\small$P_1$};
\node[anchor=west] at (5,-4) {\small$P_2$};
\node[anchor=north] at (3,-6) {\small$P_3$};
\node[anchor=north] at (1,-6) {\small$P_4$};
\node[anchor=north] at (-1,-6) {\small$P_5$};
\node[anchor=north] at (-3,-6) {\small$P_6$};
\node[anchor=east] at (-5,-4) {\small$P_7$};
\node[anchor=west] at (5,2) {\small$P_{-1}$};
\node[anchor=north] at (-4,-6) {\color{white}$P_7$};
\node[anchor=east] at (-6,4) {\color{white}$P_{12}$};
\node[anchor=south] at (-4,6) {\color{white}$P_{13}$};
\node[anchor=west] at (6,4) {\color{white}$P_{18}$};
\end{tikzpicture}
&
\begin{tikzpicture}[scale=0.26]
\draw[draw=black!10,fill=black!10] (5,4.5) -- (2.5,4.5) -- (2.5,2.5) -- (0.5,2.5) -- (0.5,-2.5) -- (-3.5,-2.5) --
(-3.5,-3.5) -- (-5,-3.5) -- (-5,5) arc (180:90:1) -- (4,6) arc (90:0:1) -- cycle;
\draw[fill=erin] (-3.1,6) -- (-3.1,3.9) -- (1.1,3.9) -- (1.1,6) -- (0.9,6) -- (0.9,4.1) -- (-1.1,4.1) -- (-2.9,4.1) -- (-2.9,6) -- cycle;
\draw[fill=erin] (-5,2.1) -- (-2.9,2.1) -- (-2.9,-2.1) -- (-5,-2.1) -- (-5,-1.9) -- (-3.1,-1.9) -- (-3.1,1.9) -- (-5,1.9) -- cycle;
\draw[fill=orchid] (-5,-0.1) -- (-3.9,-0.1) -- (-3.9,4.1) -- (-5,4.1) -- (-5,3.9) -- (-4.1,3.9) -- (-4.1,0.1) -- (-5,0.1) -- cycle;
\draw[fill=orchid] (3.1,6) -- (3.1,4.9) -- (-1.1,4.9) -- (-1.1,6) -- (-0.9,6) -- (-0.9,5.1) -- (2.9,5.1) -- (2.9,6) -- cycle;
\draw[ultra thick] (-4,-6) arc (-90:-180:1) -- (-5,5) arc (180:90:1) -- (4,6) arc (90:0:1) -- (5,-5) arc (0:-90:1) -- cycle;
\node[anchor=east] at (-5,-2) {\small$P_0$};
\node[anchor=east] at (-5,0) {\small$P_1$};
\node[anchor=east] at (-5,2) {\small$P_2$};
\node[anchor=east] at (-5,4) {\small$P_3$};
\node[anchor=south] at (-3,6) {\small$P_4$};
\node[anchor=south] at (-1,6) {\small$P_5$};
\node[anchor=south] at (1,6) {\small$P_6$};
\node[anchor=south] at (3,6) {\small$P_7$};
\node[anchor=north] at (-4,-6) {\color{white}$P_7$};
\node[anchor=east] at (-6,4) {\color{white}$P_{12}$};
\node[anchor=south] at (-4,6) {\color{white}$P_{13}$};
\node[anchor=west] at (6,4) {\color{white}$P_{18}$};
\end{tikzpicture}
\\[10.5mm]
\end{tabular}
\caption{Splitting a partial configuration into a widespread partial configuration and a configuration}
\end{center}
\label{fig:3}
\end{figure}
\begin{lemma}
\label{lem:17}
We have $\bN_2 = \bN_1 + \sum_{k \geq 1} x^{2k-1} y^k \bN_2^{4k+1} \bN_3^k$.
\end{lemma}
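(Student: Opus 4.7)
The plan is to establish a canonical bijection between $N_2$ and the set whose generating function is the right-hand side, splitting into two cases based on whether the base line $B$ of a partial configuration $\bA$ has any meeting points.

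In the easy case where $B$ has no meeting points, deleting $B$ yields a configuration of the same degree and codimension, and every configuration extends back uniquely (up to equivalence) by adding a disjoint base line. This bijection accounts for the $\bN_1$ summand.

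For the recursive case, $B$ carries at least one meeting point. The plan is to decompose $\bA$ according to an integer $k \geq 1$ measuring the combinatorial complexity of the ``first interaction'' of $B$ with the rest of $\bA$, read from the endpoint $P_{-1}$. For each $k$, this first interaction is a specific substructure $S_k \subseteq \bA$ that carries exactly $k$ crossings (accounting for the $y^k$ factor) and whose meeting points contribute total codimension exactly $2k-1$ (accounting for $x^{2k-1}$). After excising $S_k$ from $\bU$, the complementary regions, once equipped with canonical base lines and renumberings of boundary points in the style of the proofs of Lemmas~\ref{lem:15} and~\ref{lem:16}, split into $4k+1$ partial configurations and $k$ widespread partial configurations; this yields the $\bN_2^{4k+1} \bN_3^k$ factor. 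Reversibility then follows by reassembling $\bA$ uniquely from the data $\bigl(k,\, S_k\text{-shape},\, (\bA_i)_{i=1}^{4k+1},\, (\bB_j)_{j=1}^k\bigr)$, and summing over $k \geq 1$ recovers the stated formula.

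The hardest step is pinning down the canonical substructure $S_k$ so that the codimension adds up to exactly $2k-1$ and the complement partitions precisely as claimed. This requires an Euler-type computation for $S_k$ viewed as a sub-forest of $\bA$ embedded in the disk (tracking boundary endpoints, interior crossings, and meeting points of each arity), together with a classification of which complementary regions have their two distinguished boundary points (the analogues of $P_{-1}$ and $P_{4d-1}$ after renumbering) landing in the same connected component and thus become widespread rather than merely partial. The naive candidate for $S_k$ (the connected component of $B$ in $\bA$) already handles the smallest cases directly; for larger $k$ the allowable shapes of $S_k$ are more intricate, and verifying that a unique canonical choice exists on each equivalence class of $\bA$ is the key technical content of the proof.
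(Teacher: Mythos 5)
Your high-level framework matches the paper's: partition $N_2$ by the behaviour of the base line $L$, send the no-meeting-point case to $N_1$ by deleting $L$, and for the remaining cases excise a substructure parametrized by $k$ to obtain $4k+1$ partial and $k$ widespread partial configurations. However, what you call ``the hardest step'' and explicitly defer --- defining the canonical substructure $S_k$ so that everything adds up --- is in fact the entire technical content of the lemma, and your proposal does not supply it. In particular your ``naive candidate'' for $S_k$, the connected component of $L$ in $\bA$, cannot be right: that component can contain arbitrarily many crossing points and arbitrarily many meeting points, so it gives no control over the parameter $k$ and does not yield a decomposition of fixed shape.

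The paper's actual choice is much more surgical. Set $X$ to be the \emph{first} meeting point on $L$ (read from $P_{-1}$ towards $P_{4d}$), and let $k$ be the number of even lines of the canonical splitting that meet $L$ at $X$; these lines $L^e_1,\dots,L^e_k$, the odd lines $L^o_1,\dots,L^o_k$ respectively crossing them, and $L$ itself are the backbone of $S_k$. The crucial extra ingredient you do not mention is the introduction of $k$ \emph{new} auxiliary arcs $L^z_1,\dots,L^z_k$, each drawn from $X$ to a freshly inserted boundary point lying just beyond the endpoint of the relevant connected component of $\bA\setminus\{X\}$. Only after adding the $L^z_i$ does $\bU$ split into exactly $5k+2$ regions, one of which (the one containing the arc $[P_{4d},P_{-1}]$) is empty and is discarded, one becomes $\bA_0\in N_2$, and the remaining $5k$ group into $k$ blocks $(\bA_{0,i},\dots,\bA_{4,i})$ with $\bA_{3,i}$ widespread; the base line of each $\bA_{u,i}$ is assembled from fragments of $L$, $L^e_i$, $L^o_i$ and $L^z_i$. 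Without the $L^z_i$ the complement has fewer regions and the map is not a bijection onto $N_2\times(N_2^4\times N_3)^k$.

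Two further things you assert but do not check are precisely the quantities the lemma is about: that the excised piece carries exactly $k$ crossing points (this is the bijection between $L^e_i$ and $L^o_i$), and that the extra codimension coming from $X$ equals $2k-1$. The latter is not automatic: by Definition~\ref{def:2}, $X$ is a point where the base line and $k$ even lines meet, so its local codimension depends on $k$ in a way one must compute carefully and match against the exponent of $x$; the paper's own displayed bookkeeping for this quantity (equation~(\ref{align:18})) has the form $k+\mathbf{1}_{k\geq 2}$, and reconciling this with the stated exponent $2k-1$ requires an explicit justification that your sketch never attempts. In short, the outline is sound and points in the right direction, but the proposal omits the definition of $S_k$, the auxiliary arcs, and the degree/codimension verification, all of which are the substance of the proof.
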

\begin{proof}
Before defining suitable bijections, we first
partition the set $N_2$ as follows.
Let $\bA$ be a partial configuration and let $L$ be the base line of $\bA$.
If $L$ contains no meeting point, then we say that $\bA$ has type $0$.
Otherwise, let $X$ be the first meeting point of $L$ (while going from $P_{-1}$ to $P_{4d}$):
if $k \geq 1$ even lines of $\bA$ meet $L$ at that point,
then we say that $\bA$ has type $k$.
Now, for all $k \geq 0$, we denote by $N_{2,k}$ the set of
partial configurations of type $k$.

We first design a bijection $\varphi_0 : N_{2,0} \mapsto N_1$
as follows: if $\bA$ has type $0$ and degree $d$, then $\varphi(\bA)$ is
the configuration obtained by deleting the points $P_{-1}$, $P_{4d}$ and the base line of $\bA$.
The mapping $\varphi_1$ is clearly bijective, and leaves both the degree and the codimension unchanged.

Then, for all $k \geq 1$, we design a bijection $\varphi_k : N_{2,k} \mapsto N_2 \times (N_2^4 \times N_3)^k$
such that $\varphi(\bA) = (\bA_i)_{0 \leq i \leq 5k}$, where
\begin{align}
\label{align:18}
\deg \bA = \sum_{i=0}^{5k} \deg \bA_i + k \text{ and } \codim{\bA} = \sum_{i=0}^{5k} \codim{\bA_i} + k + \mathbf{1}_{k \geq 2}.
\end{align}
Let $\bA$ be a partial configuration of type $k$ and degree $d$, let $\bS$ be the canonical splitting of $\bA$;
hereafter we consider exclusively lines in $\bS$.
Let $L^e_1,\ldots,L^e_k$ be the even lines that touch $L$ at the meeting point $X$;
let $L^o_1,\ldots,L^o_k$ be the odd lines that cross respectively $L^e_1,\ldots,L^e_k$.
For $i \in \{1,\ldots,k\}$, let $P_{a_{0,i}} < P_{a_{2,i}}$ be the endpoints of $L^e_i$ and
let $P_{a_{1,i}} < P_{a_{3,i}}$ be the endpoints of $L^o_i$.
In addition, let $x$ be the greatest integer such that $P_{a_{3,i}}$ and $P_{a_{4,i}}$
belong to the same connected component of $\bA \setminus \{X\}$:
we set $a_{4,i} = x+1/2$ and we add a new point $P_x$ on the open arc of circle $(P_{a_{4,i}},P_{a_{4,i}+1})$.
Assuming that $a_{0,1} < a_{0,2} < \ldots < a_{0,k}$, one checks easily that
\[0 \leq a_{0,1} < a_{1,1} < a_{2,1} < a_{3,1} < a_{4,1} < a_{0,2} < \ldots < a_{4,k} < 4d.\]
Then, let $L^z_i$ be a (new) piecewise-affine line with endpoints $X$ and $P_{a_{4,i}}$ and that lies in $\bU \setminus \bA$.
The set $L \cup \bigcup_{i=1}^k (L^e_i \cup L^o_i \cup L^z_i)$ splits the unit disk $\bU$ into $5k+2$ connected components:
\begin{itemize}
 \item for all $0 \leq u \leq 4$ and $1 \leq i \leq k$, one component $\bC_{u,i}$ whose border contains the arc of circle $[P_{a_{u,i}},P_{a_{u+1,i}}]$
 (with the convention that $a_{5,i} = a_{0,i+1}$ when $i < k$, and $a_{5,k} = 4d$);
 \item one component $\bC_0$ whose border contains the arc of circle $[P_{-1},P_{a_{0,1}}]$;
 \item one component $\bC_{-1}$ whose border contains the arc of circle $[P_{4d},P_{-1}]$.
\end{itemize}

Let $\bC \neq \bC_{-1}$ be one such component.
Up to deleting the points $P_x$ (where $x$ is an integer or a half-integer of the form $a_{4,i}$)
that do not belong to $\partial \bC$ and to renumbering the other points $P_x$
from $P_{-1}$ to $P_\ell$ (where there are $\ell+2$ points $P_x$ on $\partial \bC$),
we observe that $(\bA \cup \bigcup_{i=1}^k L^z_i) \cap \overline{\bC}$ is a partial configuration;
and is even a widespread partial configuration if $\bC = \bC_{3,i}$ for some $i$.
We denote below this partial configuration by $\bA_{u,i}$ if $\bC = \bC_{u,i}$, or by
$\bA_0$ if $\bC = \bC_0$.

Hence, we set $\varphi_k(\bA) = (\bA_0,\bA_{0,1},\bA_{1,1},\bA_{2,1},\bA_{3,1},\bA_{4,1},\bA_{0,2},\ldots,\bA_{4,k})$,
as illustrated in Fig.~\ref{fig:4} (in the case $k = 2$).
It is easy to check that $\varphi_k$ is bijective. Moreover,
every crossing point of $\bA$ besides those between lines $L^e_i$ and $L^o_i$ belongs to some $\bA_{u,i}$ (or to $\bA_0$)
and every meeting point of $\bA$ besides $X$ belongs to some $\bA_{u,i}$ (or to $\bA_0$) as well,
with the same codimension. This proves that $\varphi_k$ satisfies~(\ref{align:18}).
Lemma~\ref{lem:17} follows.
\end{proof}
\begin{figure}[!ht]
\begin{center}
{\setlength{\tabcolsep}{-3mm}
\begin{tabular}{cccc}
\multicolumn{2}{c}{Partial configuration $\bA$} & \multicolumn{2}{c}{Associated partial configurations} \\[2mm]
\multicolumn{2}{c}{\raisebox{-43mm}{
\begin{tikzpicture}[scale=0.35]
\draw[fill=black] (8,-1.1) -- (5.9,-1.1) -- (5.9,0.9) -- (6.1,1.1) -- (8,1.1) -- (8,0.9) -- (6.2,0.9) -- (6.1,0.8) -- (6.1,-0.9) -- (8,-0.9) -- cycle;
\draw[thick,densely dotted] (-3,-9) -- (-3,-4) -- (3,-4) -- (5.7,-1.3);
\draw[thick,densely dotted] (-3,9) -- (-3,6) -- (2,6) -- (2,0.5) -- (5.4,0.5) -- (5.9,-0.9);
\draw[<-,>=stealth,thick] (-3,-9.1) -- (-3,-10.5);
\draw[<-,>=stealth,thick] (-3,9.1) -- (-3,10.5);
\draw[fill=erin] (8,-2.9) -- (7.1,-2.9) -- (7.1,-2.1) -- (6.1,-1.1) -- (5.9,-1.1) -- (5.7,-1.3) -- (5.7,-6) -- (6.8,-7.1) -- (8,-7.1) --
(8,-6.9) -- (6.9,-6.9) -- (5.9,-5.9) -- (5.9,-1.4) -- (6,-1.3) -- (6.9,-2.2) -- (6.9,-3.1) -- (8,-3.1) -- cycle;
\draw[fill=erin] (4.1,-9) -- (4.1,-7.9) -- (-0.1,-7.9) -- (-0.1,-9) -- (0.1,-9) -- (0.1,-8.1) -- (3.9,-8.1) -- (3.9,-9) -- cycle;
\draw[fill=erin] (-3.9,-9) -- (-3.9,-6.9) -- (-8,-6.9) -- (-8,-7.1) -- (-4.1,-7.1) -- (-4.1,-9) -- cycle;
\draw[fill=erin] (-2.1,9) -- (-2.1,6.9) -- (2.1,6.9) -- (2.1,9) -- (1.9,9) -- (1.9,7.1) -- (-1.9,7.1) -- (-1.9,9) -- cycle;
\draw[fill=erin] (-8,-3.1) -- (0.1,-3.1) -- (0.1,-1.3) -- (5.7,-1.3) -- (5.9,-1.1) -- (5.9,-0.9) -- (4.9,0.1) -- (0.1,0.1) -- (0.1,5.1) --
(-5.9,5.1) -- (-6.1,4.9) -- (-6.1,1.1) -- (-8,1.1) -- (-8,0.9) -- (-5.9,0.9) -- (-5.9,4.8) -- (-5.8,4.9) -- (-0.1,4.9) -- (-0.1,-0.1) --
(4.8,-0.1) -- (5.7,-1) -- (5.6,-1.1) -- (-0.1,-1.1) -- (-0.1,-2.9) -- (-8,-2.9) -- cycle;
\draw[fill=erin] (-8,4.9) -- (-6.1,4.9) -- (-5.9,5.1) -- (-5.9,9) -- (-6.1,9) -- (-6.1,5.2) -- (-6.2,5.1) -- (-8,5.1) -- cycle;
\draw[fill=erin] (8,4.9) -- (6.1,4.9) -- (6.1,1.1) -- (5.9,0.9) -- (3.9,0.9) -- (3.9,7.1) -- (5.9,7.1) -- (5.9,9) -- (6.1,9) --
(6.1,6.9) -- (4.1,6.9) -- (4.1,1.1) -- (5.8,1.1) -- (5.9,1.2) -- (5.9,5.1) -- (8,5.1) -- cycle;
\draw[fill=orchid] (6.1,-9) -- (6.1,-7.8) -- (5.2,-6.9) -- (2.1,-6.9) -- (1.9,-7.1) -- (1.9,-9) -- (2.1,-9) -- (2.1,-7.2) -- (2.2,-7.1) --
(5.1,-7.1) -- (5.9,-7.9) -- (5.9,-9) -- cycle;
\draw[fill=orchid] (8,-5.1) -- (2.1,-5.1) -- (2.1,-6.9) -- (1.9,-7.1) -- (-1.9,-7.1) -- (-1.9,-9) -- (-2.1,-9) -- (-2.1,-6.9) --
(1.8,-6.9) -- (1.9,-6.8) -- (1.9,-4.9) -- (8,-4.9) -- cycle;
\draw[fill=orchid] (-5.9,-9) -- (-5.9,-4.9) -- (-8,-4.9) -- (-8,-5.1) -- (-6.1,-5.1) -- (-6.1,-9) -- cycle;
\draw[fill=orchid] (-8,-1.1) -- (-6.9,-1.1) -- (-6.9,3.1) -- (-8,3.1) -- (-8,2.9) -- (-7.1,2.9) -- (-7.1,-0.9) -- (-8,-0.9) -- cycle;
\draw[fill=orchid] (-3.9,9) -- (-3.9,6.9) -- (-8,6.9) -- (-8,7.1) -- (-4.1,7.1) -- (-4.1,9) -- cycle;
\draw[fill=orchid] (-0.1,9) -- (-0.1,7.9) -- (4.1,7.9) -- (4.1,9) -- (3.9,9) -- (3.9,8.1) -- (0.1,8.1) -- (0.1,9) -- cycle;
\draw[fill=orchid] (8,7.1) -- (6.9,7.1) -- (6.9,2.9) -- (8,2.9) -- (8,3.1) -- (7.1,3.1) -- (7.1,6.9) -- (8,6.9) -- cycle;
\draw[ultra thick] (-7,-9) arc (-90:-180:1) -- (-8,8) arc (180:90:1) -- (7,9) arc (90:0:1) -- (8,-8) arc (0:-90:1) -- cycle;
\node[anchor=west] at (8,-3) {$P_0$};
\node[anchor=west] at (8,-5) {$P_1$};
\node[anchor=west] at (8,-7) {$P_2$};
\node[anchor=north] at (6,-9) {$P_3$};
\node[anchor=north] at (4,-9) {$P_4$};
\node[anchor=north] at (2,-9) {$P_5$};
\node[anchor=north] at (0,-9) {$P_6$};
\node[anchor=north] at (-2,-9) {$P_7$};
\node[anchor=north] at (-4,-9) {$P_8$};
\node[anchor=north] at (-6,-9) {$P_9$};
\node[anchor=east] at (-8,-7) {$P_{10}$};
\node[anchor=east] at (-8,-5) {$P_{11}$};
\node[anchor=east] at (-8,-3) {$P_{12}$};
\node[anchor=east] at (-8,-1) {$P_{13}$};
\node[anchor=east] at (-8,1) {$P_{14}$};
\node[anchor=east] at (-8,3) {$P_{15}$};
\node[anchor=east] at (-8,5) {$P_{16}$};
\node[anchor=east] at (-8,7) {$P_{17}$};
\node[anchor=south] at (-6,9.05) {$P_{18}$};
\node[anchor=south] at (-4,9.05) {$P_{19}$};
\node[anchor=south] at (-2,9.05) {$P_{20}$};
\node[anchor=south] at (0,9.05) {$P_{21}$};
\node[anchor=south] at (2,9.05) {$P_{22}$};
\node[anchor=south] at (4,9.05) {$P_{23}$};
\node[anchor=south] at (6,9.05) {$P_{24}$};
\node[anchor=west] at (8,7) {$P_{25}$};
\node[anchor=west] at (8,5) {$P_{26}$};
\node[anchor=west] at (8,3) {$P_{27}$};
\node[anchor=west] at (8,1) {$P_{28}$};
\node[anchor=west] at (8,-1) {$P_{-1}$};
\node[anchor=north] at (-3,-10.5) {$P_{7.5}$};
\node[anchor=south] at (-3,10.5) {$P_{19.5}$};
\end{tikzpicture}
}\vspace*{-42mm}}
&
\begin{tikzpicture}[scale=0.18]
\node[anchor=south] at (-2,9) {\color{white}\small$P_4$};
\node[anchor=east] at (-7.7,-1) {\color{white}\small$P_{-1}$};
\node[anchor=west] at (7.7,-1) {\color{white}\small$P_{-1}$};
\node[anchor=north] at (-2,-9) {\color{white}\small$P_4$};
\node[anchor=north] at (0,8.5) {$\bA_0$};
\draw[draw=black!10,fill=black!10] (8,-3) -- (7,-3) -- (7,-2.1) -- (6,-1.1) -- (8,-1.1) -- cycle;
\draw[fill=black] (8,-2.9) -- (7.1,-2.9) -- (7.1,-2.1) -- (6.1,-1.1) -- (8,-1.1) -- (8,-0.9) -- (5.9,-0.9) -- (5.9,-1.2) --
(6.9,-2.2) -- (6.9,-3.1) -- (8,-3.1) -- cycle;
\draw[ultra thick] (-7,-9) arc (-90:-180:1) -- (-8,8) arc (180:90:1) -- (7,9) arc (90:0:1) -- (8,-8) arc (0:-90:1) -- cycle;
\node[anchor=west] at (7.7,-3) {\small$P_0$};
\node[anchor=west] at (7.7,-1) {\small$P_{-1}$};
\end{tikzpicture}
&
\begin{tikzpicture}[scale=0.18]
\node[anchor=south] at (-2,9) {\color{white}\small$P_4$};
\node[anchor=east] at (-7.7,-5) {\color{white}\small$P_{-1}$};
\node[anchor=west] at (7.7,-1) {\color{white}\small$P_{-1}$};
\node[anchor=north] at (-2,-9) {\color{white}\small$P_4$};
\node[anchor=north] at (0,8.5) {$\bA_{0,1}$};
\draw[draw=black!10,fill=black!10] (8,-4.9) -- (5.9,-4.9) -- (5.9,-1.4) -- (6,-1.3) -- (6.9,-2.2) -- (6.9,-3.1) -- (8,-3.1) -- cycle;
\draw[fill=black] (8,-2.9) -- (7.1,-2.9) -- (7.1,-2.1) -- (6.1,-1.1) -- (5.9,-1.1) -- (5.7,-1.3) -- (5.7,-5.1) -- (8,-5.1) -- (8,-4.9) --
(5.9,-4.9) -- (5.9,-1.4) -- (6,-1.3) -- (6.9,-2.2) -- (6.9,-3.1) -- (8,-3.1) -- cycle;
\draw[ultra thick] (-7,-9) arc (-90:-180:1) -- (-8,8) arc (180:90:1) -- (7,9) arc (90:0:1) -- (8,-8) arc (0:-90:1) -- cycle;
\node[anchor=west] at (7.7,-3) {\small$P_{-1}$};
\node[anchor=west] at (7.7,-5) {\small$P_0$};
\end{tikzpicture}
\\
&&\begin{tikzpicture}[scale=0.18]
\node[anchor=south] at (-2,9) {\color{white}\small$P_4$};
\node[anchor=east] at (-7.7,-1) {\color{white}\small$P_{-1}$};
\node[anchor=west] at (7.7,-1) {\color{white}\small$P_{-1}$};
\node[anchor=north] at (-2,-9) {\color{white}\small$P_4$};
\node[anchor=north] at (0,8.5) {$\bA_{1,1}$};
\draw[draw=black!10,fill=black!10] (8,-6.9) -- (6.9,-6.9) -- (5.9,-5.9) -- (5.9,-5.1) -- (8,-5.1) -- cycle;
\draw[fill=black]  (8,-4.9) -- (5.7,-4.9) -- (5.7,-6) -- (6.8,-7.1) -- (8,-7.1) --
(8,-6.9) -- (6.9,-6.9) -- (5.9,-5.9) -- (5.9,-5.1) -- (8,-5.1) -- cycle;
\draw[ultra thick] (-7,-9) arc (-90:-180:1) -- (-8,8) arc (180:90:1) -- (7,9) arc (90:0:1) -- (8,-8) arc (0:-90:1) -- cycle;
\node[anchor=west] at (7.7,-5) {\small$P_{-1}$};
\node[anchor=west] at (7.7,-7) {\small$P_0$};
\end{tikzpicture}
&
\begin{tikzpicture}[scale=0.18]
\node[anchor=south] at (-2,9) {\color{white}\small$P_4$};
\node[anchor=east] at (-7.7,-1) {\color{white}\small$P_{-1}$};
\node[anchor=west] at (7.7,-1) {\color{white}\small$P_{-1}$};
\node[anchor=north] at (-2,-9) {\color{white}\small$P_4$};
\node[anchor=north] at (0,8.5) {$\bA_{2,1}$};
\draw[draw=black!10,fill=black!10] (8,-7.1) -- (6.8,-7.1) -- (5.7,-6) -- (5.7,-5.1) -- (2.1,-5.1) -- (2.1,-6.9) -- (1.9,-7.1) -- (-1.9,-7.1) -- (-1.9,-9) -- (7,-9) arc (-90:0:1) -- cycle;
\draw[fill=erin] (6.1,-9) -- (6.1,-7.8) -- (5.2,-6.9) -- (2.1,-6.9) -- (1.9,-7.1) -- (1.9,-9) -- (2.1,-9) -- (2.1,-7.2) -- (2.2,-7.1) --
(5.1,-7.1) -- (5.9,-7.9) -- (5.9,-9) -- cycle;
\draw[fill=orchid] (4.1,-9) -- (4.1,-7.9) -- (-0.1,-7.9) -- (-0.1,-9) -- (0.1,-9) -- (0.1,-8.1) -- (3.9,-8.1) -- (3.9,-9) -- cycle;
\draw[fill=black] (8,-7.1) -- (6.8,-7.1) -- (5.7,-6) -- (5.7,-5.1) -- (2.1,-5.1) -- (2.1,-6.9) -- (1.9,-7.1) -- (-1.9,-7.1) -- (-1.9,-9) --
(-2.1,-9) -- (-2.1,-6.9) -- (1.8,-6.9) -- (1.9,-6.8) -- (1.9,-4.9) -- (5.9,-4.9) -- (5.9,-5.9) -- (6.9,-6.9) -- (8,-6.9) -- cycle;
\draw[ultra thick] (-7,-9) arc (-90:-180:1) -- (-8,8) arc (180:90:1) -- (7,9) arc (90:0:1) -- (8,-8) arc (0:-90:1) -- cycle;
\node[anchor=west] at (7.7,-7) {\small$P_{-1}$};
\node[anchor=north] at (6,-9) {\small$P_0$};
\node[anchor=north] at (4,-9) {\small$P_1$};
\node[anchor=north] at (2,-9) {\small$P_2$};
\node[anchor=north] at (0,-9) {\small$P_3$};
\node[anchor=north] at (-2,-9) {\small$P_4$};
\end{tikzpicture}
\\
\begin{tikzpicture}[scale=0.18]
\node[anchor=south] at (-2,9) {\color{white}\small$P_4$};
\node[anchor=east] at (-7.7,-1) {\color{white}\small$P_{-1}$};
\node[anchor=west] at (7.7,-1) {\color{white}\small$P_{-1}$};
\node[anchor=north] at (-2,-9) {\color{white}\small$P_4$};
\node[anchor=north] at (0,8.5) {$\bA_{3,1}$};
\draw[draw=black!10,fill=black!10] (-2.1,-9) -- (-2.1,-6.9) -- (1.8,-6.9) -- (1.9,-6.8) -- (1.9,-4.9) -- (5.7,-4.9) -- (5.7,-1.3) -- (2.9,-4.1) -- (-2.9,-4.1) -- (-2.9,-9) -- cycle;
\draw[fill=black] (-3.1,-9) -- (-3.1,-3.9) -- (2.8,-3.9) -- (5.6,-1.1) -- (5.9,-1.1) -- (5.9,-5.1) -- (2.1,-5.1) -- (2.1,-6.9) -- (1.9,-7.1) -- (-1.9,-7.1) --
(-1.9,-9) -- (-2.1,-9) -- (-2.1,-6.9) -- (1.8,-6.9) -- (1.9,-6.8) -- (1.9,-4.9) -- (5.7,-4.9) -- (5.7,-1.3) -- (2.9,-4.1) -- (-2.9,-4.1) -- (-2.9,-9) -- cycle;
 
\draw[ultra thick] (-7,-9) arc (-90:-180:1) -- (-8,8) arc (180:90:1) -- (7,9) arc (90:0:1) -- (8,-8) arc (0:-90:1) -- cycle;
\draw[->,>=stealth,thick] (0,-10.5) -- (-0.75,-10.5) -- (-2,-9.25);
\draw[->,>=stealth,thick] (-5,-10.5) -- (-4.25,-10.5) -- (-3,-9.25);
\node[anchor=north] at (2,-9) {\small$P_{-1}$};
\node[anchor=north] at (-6.5,-9) {\small$P_0$};
\end{tikzpicture}
&
\begin{tikzpicture}[scale=0.18]
\node[anchor=south] at (-2,9) {\color{white}\small$P_4$};
\node[anchor=east] at (-7.7,-1) {\color{white}\small$P_{-1}$};
\node[anchor=west] at (7.7,-1) {\color{white}\small$P_{-1}$};
\node[anchor=north] at (-2,-9) {\color{white}\small$P_4$};
\node[anchor=north] at (0,8.5) {$\bA_{4,1}$};
\draw[draw=black!10,fill=black!10] (-3.1,-9) -- (-3.1,-3.9) -- (2.8,-3.9) -- (5.4,-1.3) -- (0.1,-1.3) -- (0.1,-3.1) -- (-8,-3.1) -- (-8,-8) arc (-180:-90:1) -- cycle;
\draw[fill=black] (-3.1,-9) -- (-3.1,-3.9) -- (2.8,-3.9) -- (5.4,-1.3) -- (0.1,-1.3) -- (0.1,-3.1) -- (-8,-3.1) -- (-8,-2.9) -- (-0.1,-2.9) --
(-0.1,-1.1) -- (5.6,-1.1) -- (5.6,-1.4) -- (2.9,-4.1) -- (-2.9,-4.1) -- (-2.9,-9) -- cycle;
\draw[fill=erin] (-3.9,-9) -- (-3.9,-6.9) -- (-8,-6.9) -- (-8,-7.1) -- (-4.1,-7.1) -- (-4.1,-9) -- cycle;
\draw[fill=orchid] (-5.9,-9) -- (-5.9,-4.9) -- (-8,-4.9) -- (-8,-5.1) -- (-6.1,-5.1) -- (-6.1,-9) -- cycle;
\draw[ultra thick] (-7,-9) arc (-90:-180:1) -- (-8,8) arc (180:90:1) -- (7,9) arc (90:0:1) -- (8,-8) arc (0:-90:1) -- cycle;
\draw[->,>=stealth,thick] (-1,-10.5) -- (-1.75,-10.5) -- (-3,-9.25);
\node[anchor=north] at (1,-9) {\small$P_{-1}$};
\node[anchor=north] at (-4,-9) {\small$P_0$};
\node[anchor=north] at (-6,-9) {\small$P_1$};
\node[anchor=east] at (-7.7,-7) {\small$P_2$};
\node[anchor=east] at (-7.7,-5) {\small$P_3$};
\node[anchor=east] at (-7.7,-3) {\small$P_4$};
\end{tikzpicture}
&
\begin{tikzpicture}[scale=0.18]
\node[anchor=south] at (-2,9) {\color{white}\small$P_4$};
\node[anchor=east] at (-7.7,-1) {\color{white}\small$P_{-1}$};
\node[anchor=west] at (7.7,-1) {\color{white}\small$P_{-1}$};
\node[anchor=north] at (-2,-9) {\color{white}\small$P_4$};
\node[anchor=north] at (0,8.5) {$\bA_{0,2}$};
\draw[draw=black!10,fill=black!10] (-8,-1.1) -- (-6.9,-1.1) -- (-6.9,0.9) -- (-5.9,0.9) -- (-5.9,4.8) -- (-5.8,4.9) -- (-0.1,4.9) -- (-0.1,-0.1) --
(4.8,-0.1) -- (5.7,-1) -- (5.6,-1.1) -- (-0.1,-1.1) -- (-0.1,-2.9) -- (-8,-2.9) -- cycle;
\draw[fill=black] (-8,-3.1) -- (0.1,-3.1) -- (0.1,-1.3) -- (5.7,-1.3) -- (5.9,-1.1) -- (5.9,-0.9) -- (4.9,0.1) -- (0.1,0.1) -- (0.1,5.1) --
(-5.9,5.1) -- (-6.1,4.9) -- (-6.1,1.1) -- (-7.1,1.1) -- (-7.1,-0.9) -- (-8,-0.9) -- (-8,-1.1) -- (-6.9,-1.1) -- (-6.9,0.9) --
(-5.9,0.9) -- (-5.9,4.8) -- (-5.8,4.9) -- (-0.1,4.9) -- (-0.1,-0.1) --
(4.8,-0.1) -- (5.7,-1) -- (5.6,-1.1) -- (-0.1,-1.1) -- (-0.1,-2.9) -- (-8,-2.9) -- cycle;
\draw[ultra thick] (-7,-9) arc (-90:-180:1) -- (-8,8) arc (180:90:1) -- (7,9) arc (90:0:1) -- (8,-8) arc (0:-90:1) -- cycle;
\node[anchor=east] at (-7.7,-3) {\small$P_{-1}$};
\node[anchor=east] at (-7.7,-1) {\small$P_0$};
\node[anchor=north] at (-6,-9) {\color{white}\small$P_1$};
\end{tikzpicture}
&
\begin{tikzpicture}[scale=0.18]
\node[anchor=south] at (-2,9) {\color{white}\small$P_4$};
\node[anchor=east] at (-7.7,-1) {\color{white}\small$P_{-1}$};
\node[anchor=west] at (7.7,-1) {\color{white}\small$P_{-1}$};
\node[anchor=north] at (-2,-9) {\color{white}\small$P_4$};
\node[anchor=north] at (0,8.5) {$\bA_{1,2}$};
\draw[draw=black!10,fill=black!10] (-8,0.9) -- (-7.1,0.9) -- (-7.1,-0.9) -- (-8,-0.9) -- cycle;
\draw[fill=black] (-8,-1.1) -- (-6.9,-1.1) -- (-6.9,1.1) -- (-8,1.1) -- (-8,0.9) -- (-7.1,0.9) -- (-7.1,-0.9) -- (-8,-0.9) -- cycle;
\draw[ultra thick] (-7,-9) arc (-90:-180:1) -- (-8,8) arc (180:90:1) -- (7,9) arc (90:0:1) -- (8,-8) arc (0:-90:1) -- cycle;
\node[anchor=east] at (-7.7,-1) {\small$P_{-1}$};
\node[anchor=east] at (-7.7,1) {\small$P_0$};
\node[anchor=north] at (-6,-9) {\color{white}\small$P_1$};
\end{tikzpicture}
\\
\begin{tikzpicture}[scale=0.18]
\node[anchor=south] at (-2,9) {\color{white}\small$P_4$};
\node[anchor=east] at (-7.7,-1) {\color{white}\small$P_{-1}$};
\node[anchor=west] at (7.7,-1) {\color{white}\small$P_{-1}$};
\node[anchor=north] at (-2,-9) {\color{white}\small$P_4$};
\node[anchor=south] at (0,-8.5) {$\bA_{2,2}$};
\draw[draw=black!10,fill=black!10] (-8,2.9) -- (-7.1,2.9) -- (-7.1,1.1) -- (-8,1.1) -- cycle;
\draw[fill=black] (-8,0.9) -- (-6.9,0.9) -- (-6.9,3.1) -- (-8,3.1) -- (-8,2.9) -- (-7.1,2.9) -- (-7.1,1.1) -- (-8,1.1) -- cycle;
\draw[ultra thick] (-7,-9) arc (-90:-180:1) -- (-8,8) arc (180:90:1) -- (7,9) arc (90:0:1) -- (8,-8) arc (0:-90:1) -- cycle;
\node[anchor=east] at (-7.7,1) {\small$P_{-1}$};
\node[anchor=east] at (-7.7,3) {\small$P_0$};
\node[anchor=north] at (-6,-9) {\color{white}\small$P_1$};
\end{tikzpicture}
&
\begin{tikzpicture}[scale=0.18]
\node[anchor=south] at (-2,9) {\color{white}\small$P_4$};
\node[anchor=east] at (-7.7,-1) {\color{white}\small$P_{-1}$};
\node[anchor=west] at (7.7,-1) {\color{white}\small$P_{-1}$};
\node[anchor=north] at (-2,-9) {\color{white}\small$P_4$};
\node[anchor=south] at (0,-8.5) {$\bA_{3,2}$};
\draw[draw=black!10,fill=black!10] (-3.1,9) -- (-3.1,5.9) -- (1.9,5.9) -- (1.9,0.4) -- (5.3,0.4) -- (5.8,-0.8) -- (4.9,0.1) -- (0.1,0.1) -- (0.1,5.1) --
(-5.9,5.1) -- (-6.1,4.9) -- (-6.1,1.1) -- (-6.9,1.1) -- (-6.9,3.1) -- (-8,3.1) -- (-8,8) arc (180:90:1) -- cycle;
\draw[fill=black] (-3.1,9) -- (-3.1,5.9) -- (1.9,5.9) -- (1.9,0.4) -- (5.3,0.4) -- (5.8,-0.8) -- (4.9,0.1) -- (0.1,0.1) -- (0.1,5.1) --
(-5.9,5.1) -- (-6.1,4.9) -- (-6.1,1.1) -- (-6.9,1.1) -- (-6.9,3.1) -- (-8,3.1) -- (-8,2.9) -- (-7.1,2.9) -- (-7.1,0.9) --
(-5.9,0.9) -- (-5.9,4.8) -- (-5.8,4.9) -- (-0.1,4.9) -- (-0.1,-0.1) -- (4.8,-0.1) -- (5.7,-1) -- (6,-1) -- (6,-0.7) -- (5.4,0.6) -- (2.1,0.6) --
(2.1,6.1) -- (-2.9,6.1) -- (-2.9,9) -- cycle;
\draw[fill=erin] (-8,4.9) -- (-6.1,4.9) -- (-5.9,5.1) -- (-5.9,9) -- (-6.1,9) -- (-6.1,5.2) -- (-6.2,5.1) -- (-8,5.1) -- cycle;
\draw[fill=orchid] (-3.9,9) -- (-3.9,6.9) -- (-8,6.9) -- (-8,7.1) -- (-4.1,7.1) -- (-4.1,9) -- cycle;
\draw[ultra thick] (-7,-9) arc (-90:-180:1) -- (-8,8) arc (180:90:1) -- (7,9) arc (90:0:1) -- (8,-8) arc (0:-90:1) -- cycle;
\draw[->,>=stealth,thick] (-1,10.5) -- (-1.75,10.5) -- (-3,9.25);
\node[anchor=east] at (-7.7,3) {\small$P_{-1}$};
\node[anchor=east] at (-7.7,5) {\small$P_0$};
\node[anchor=east] at (-7.7,7) {\small$P_1$};
\node[anchor=south] at (-6,9) {\small$P_2$};
\node[anchor=south] at (-4,9) {\small$P_3$};
\node[anchor=south] at (0.25,9) {\small$P_4$};
\end{tikzpicture}
&
\begin{tikzpicture}[scale=0.18]
\node[anchor=south] at (-2,9) {\color{white}\small$P_4$};
\node[anchor=east] at (-7.7,-1) {\color{white}\small$P_{-1}$};
\node[anchor=west] at (7.7,-1) {\color{white}\small$P_{-1}$};
\node[anchor=north] at (-2,-9) {\color{white}\small$P_4$};
\node[anchor=south] at (0,-8.5) {$\bA_{4,2}$};
\draw[draw=black!10,fill=black!10] (8,1.1) -- (6.1,1.1) -- (5.9,0.9) -- (5.9,-0.65) -- (5.4,0.6) -- (2.1,0.6) -- (2.1,6.1) -- (-2.9,6.1) -- (-2.9,9) -- (7,9) arc (90:0:1) -- cycle;
\draw[fill=black] (-3.1,9) -- (-3.1,5.9) -- (1.9,5.9) -- (1.9,0.4) -- (5.3,0.4) -- (5.8,-0.9) -- (6.1,-0.9) -- (6.1,0.8) -- (6.2,0.9) -- (8,0.9) --
(8,1.1) -- (6.1,1.1) -- (5.9,0.9) -- (5.9,-0.65) -- (5.4,0.6) -- (2.1,0.6) -- (2.1,6.1) -- (-2.9,6.1) -- (-2.9,9) -- cycle;
\draw[fill=erin] (-2.1,9) -- (-2.1,6.9) -- (2.1,6.9) -- (2.1,9) -- (1.9,9) -- (1.9,7.1) -- (-1.9,7.1) -- (-1.9,9) -- cycle;
\draw[fill=erin] (8,4.9) -- (6.1,4.9) -- (6.1,1.1) -- (5.9,0.9) -- (3.9,0.9) -- (3.9,7.1) -- (5.9,7.1) -- (5.9,9) -- (6.1,9) --
(6.1,6.9) -- (4.1,6.9) -- (4.1,1.1) -- (5.8,1.1) -- (5.9,1.2) -- (5.9,5.1) -- (8,5.1) -- cycle;
\draw[fill=orchid] (-0.1,9) -- (-0.1,7.9) -- (4.1,7.9) -- (4.1,9) -- (3.9,9) -- (3.9,8.1) -- (0.1,8.1) -- (0.1,9) -- cycle;
\draw[fill=orchid] (8,7.1) -- (6.9,7.1) -- (6.9,2.9) -- (8,2.9) -- (8,3.1) -- (7.1,3.1) -- (7.1,6.9) -- (8,6.9) -- cycle;
\draw[ultra thick] (-7,-9) arc (-90:-180:1) -- (-8,8) arc (180:90:1) -- (7,9) arc (90:0:1) -- (8,-8) arc (0:-90:1) -- cycle;
\node[anchor=south] at (-2,9) {\small$P_0$};
\node[anchor=south] at (0,9) {\small$P_1$};
\node[anchor=south] at (2,9) {\small$P_2$};
\node[anchor=south] at (4,9) {\small$P_3$};
\node[anchor=south] at (6,9) {\small$P_4$};
\node[anchor=west] at (7.7,7) {\small$P_5$};
\node[anchor=west] at (7.7,5) {\small$P_6$};
\node[anchor=west] at (7.7,3) {\small$P_7$};
\node[anchor=west] at (7.7,1) {\small$P_8$};
\draw[->,>=stealth,thick] (-5,10.5) -- (-4.25,10.5) -- (-3,9.25);
\node[anchor=south] at (-7,9) {\small$P_{-1}$};
\end{tikzpicture}
&
\end{tabular}
}
\caption{Splitting a partial configuration of type $2$ into $2 \times 5 + 1$ partial configurations}
\end{center}
\label{fig:4}
\end{figure}
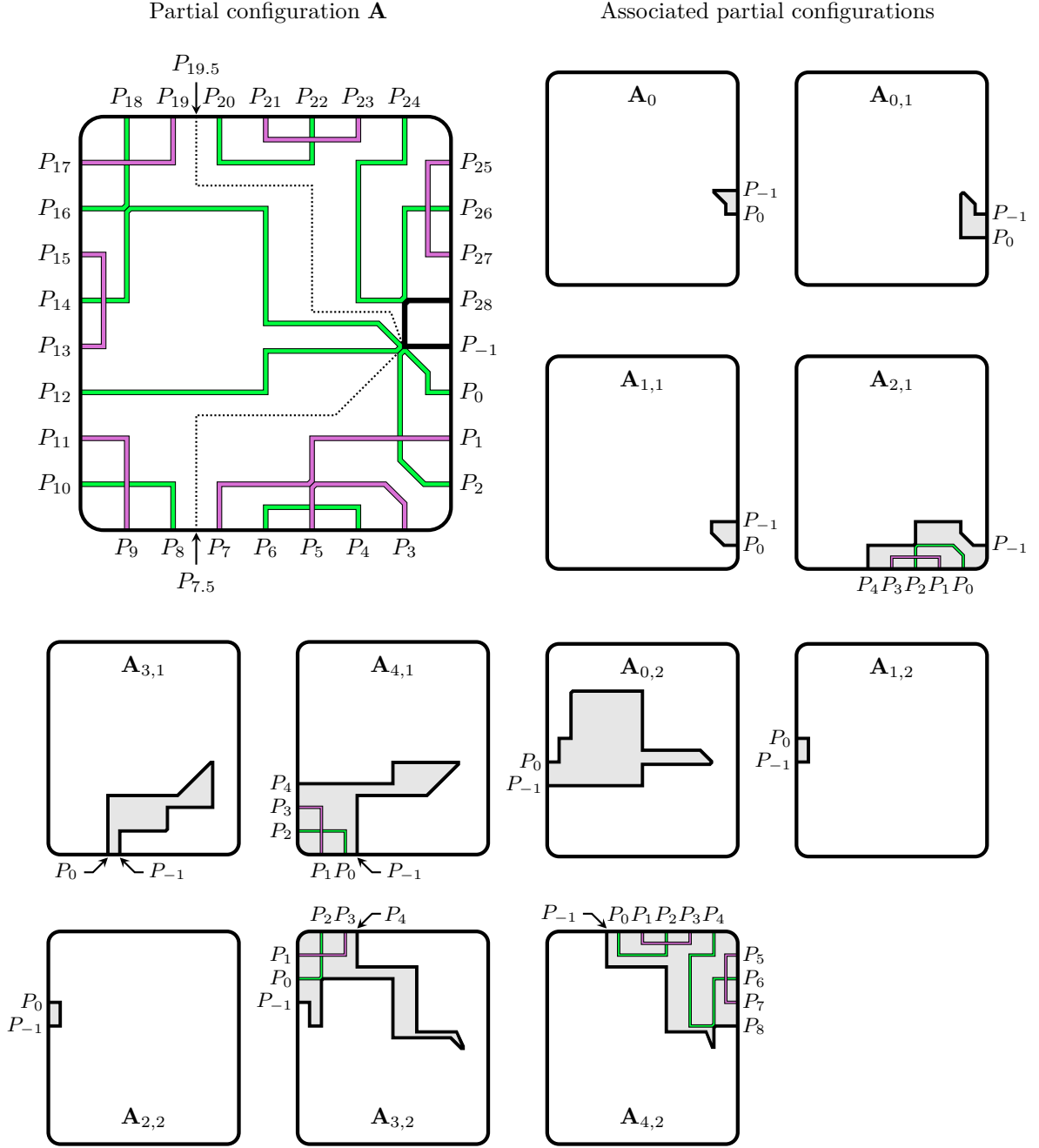
The following result follows immediately.
\begin{theorem}
\label{thm:19}
The bivariate generating series $\bN_1(x,y)$ and $\bN_2(x,y)$ are solutions of:
\[\bN_1 = 1 + y \bN_2^4 \text{ and } (1 - \bN_2 + y \bN_2^4)(1 + y \bN_2^4 - x^2 y \bN_2^5) + x y \bN_2^6 = 0.\]
In particular, $\bN_1$ is algebraic, and therefore holonomic.
\end{theorem}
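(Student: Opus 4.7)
The plan is to combine Lemmas~\ref{lem:15}, \ref{lem:16}, \ref{lem:17} purely algebraically: Lemma~\ref{lem:15} already delivers the first identity $\bN_1 = 1 + y \bN_2^4$ verbatim, so all work concentrates on eliminating $\bN_3$ from the functional equation of Lemma~\ref{lem:17} in order to get a relation between $\bN_1$ and $\bN_2$ alone.

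First, I invoke Lemma~\ref{lem:16}, which gives $\bN_3 = \bN_2/\bN_1$ (this is legitimate as a formal power series identity since $\bN_1 = 1 + O(y)$ is invertible in $\mathbb{Q}[x][[y]]$). Substituting into Lemma~\ref{lem:17} replaces $\bN_2^{4k+1}\bN_3^k$ by $\bN_2^{5k+1}/\bN_1^k$, producing
\[
\bN_2 \;=\; \bN_1 + \bN_2 \sum_{k \geq 1} \frac{1}{x}\left(\frac{x^2 y \bN_2^5}{\bN_1}\right)^{\!\!k}.
\]
Second, I sum the geometric series. The ratio $r = x^2 y \bN_2^5/\bN_1$ lies in the maximal ideal $(y)$ of the formal series ring, so the sum converges formally to $r/(1-r)$. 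Clearing the denominator $\bN_1 - x^2 y \bN_2^5$ from the resulting expression yields the compact identity
\[
(\bN_2 - \bN_1)(\bN_1 - x^2 y \bN_2^5) \;=\; x y \bN_2^6.
\]

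Third, I substitute $\bN_1 = 1 + y \bN_2^4$ (from Lemma~\ref{lem:15}) into both factors on the left-hand side: $\bN_1 - \bN_2 = 1 - \bN_2 + y \bN_2^4$ and $\bN_1 - x^2 y \bN_2^5 = 1 + y \bN_2^4 - x^2 y \bN_2^5$. After moving everything to one side, this is exactly the second equation in the statement. Algebraicity of $\bN_2$ follows because this second equation is a non-trivial polynomial relation between $\bN_2$ and $x,y$, and algebraicity of $\bN_1$ then follows from the polynomial identity $\bN_1 = 1 + y \bN_2^4$. Holonomy is a standard consequence of algebraicity for bivariate power series.

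There is no genuine obstacle: the only nontrivial step is the geometric-series manipulation, and the only verification needed is that the product $(\bN_2-\bN_1)(\bN_1 - x^2 y \bN_2^5)$ and the right-hand side $xy\bN_2^6$ are rearranged correctly when multiplying through. If I wished to be meticulous, I would double-check the formal-series convergence of the geometric sum by confirming that for every fixed $d$ only finitely many terms in $\sum_{k\geq 1} r^k$ contribute to $[y^d]$, which is immediate since $r \in y\,\mathbb{Q}[x][[y,\bN_2]]$ and each $\bN_2$-monomial has non-negative degree in $y$.
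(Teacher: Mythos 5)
Your proof is correct and follows the same route as the paper: cite Lemmas~\ref{lem:15}, \ref{lem:16}, \ref{lem:17}, eliminate $\bN_3$ via $\bN_3 = \bN_2/\bN_1$, sum the geometric series in Lemma~\ref{lem:17} to get $(\bN_2-\bN_1)(\bN_1 - x^2 y \bN_2^5) = x y \bN_2^6$, and substitute $\bN_1 = 1 + y\bN_2^4$. The extra care you take in justifying the formal-series geometric sum and in spelling out why algebraicity and holonomy follow is sound and slightly more explicit than the paper's proof, but it is the same argument.
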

\begin{proof}
Lemmas~\ref{lem:15},~\ref{lem:16} and~\ref{lem:17} state explicitly the equalities
\[\bN_1 = 1 + y \bN_2^4 \text{, } \bN_2 = \bN_1 \bN_3 \text{ and } \bN_2 = \bN_1 + \frac{x y \bN_2^5 \bN_3}{1 - x^2 y \bN_2^4 \bN_3}.\]
The first two equalities allow us to express $\bN_1$ and $\bN_3$ in terms of $\bN_2$, thanks to which the third equality is rewritten as
\[(1 - \bN_2 + y \bN_2^4)(1 + y \bN_2^4 - x^2 y \bN_2^5) + x y \bN_2^6 = 0,\]
which completes the proof.
\end{proof}

\begin{remark}
By adapting the above arguments, we might directly obtain similar results for different, piecewise-affine notions of codimensions.
In general, defining the local codimension  at a critical point (i.e. of a meeting point) of valency $2k$ of the graph as an integer $c(k)$,
Lemmas~\ref{lem:15} and~\ref{lem:16} are left unchanged, while Lemma~\ref{lem:17} indicates that
$\bN_2 = \bN_1 + \sum_{k \geq 1} x^{c(k+1)} y^k \bN_2^{4k+1} \bN_3^k$, from which a variant of Theorem~\ref{thm:19} follows as well.

For example, if we set $c(k) = k - \mathbf{1}_{k = 2}$, then Theorem~\ref{thm:19} states that
\[\bN_1 = 1 + y \bN_2^4 \text{ and } (1 - \bN_2 + 2 y \bN_2^4 - y \bN_2^5 + x y \bN_2^6 + y^2 \bN_2^8)(1 + y \bN_2^4 - x y \bN_2^5) + x^3 y^2 \bN_2^9 = 0.\]
\end{remark}

\bigskip

\paragraph{Algorithmic consequences}

Since $\bN_1$ is algebraic, it is known that $\bN_1$ is a solution of a linear partial-differential equation,
which can be made explicit. From such an equation, it is then possible to derive a linear recurrence equation,
whose coefficients are rational fractions in $c$ and $d$, and that is satisfied by the family $(\#N_1(c,d))_{c,d \geq 0}$.
However, it follows from Theorem~\ref{thm:19} that the minimal polynomial of $\bN_1(x,y)$ is
the following 60-term coefficient polynomial in $z$:
\begin{align*}P(x,y,z) = &
 -x^4 - 4 x^5 - 6 x^6 - 4 x^7 - x^8 + 6 x^4 z + 24 x^5 z + 36 x^6 z +
 24 x^7 z + 6 x^8 z - 15 x^4 z^2 - 60 x^5 z^2 \\
 & - 90 x^6 z^2 -
 60 x^7 z^2 - 15 x^8 z^2 + 20 x^4 z^3 + 80 x^5 z^3 + 120 x^6 z^3 +
 80 x^7 z^3 + 20 x^8 z^3 - 15 x^4 z^4 \\
 & - 60 x^5 z^4 - 90 x^6 z^4 -
 60 x^7 z^4 - 15 x^8 z^4 - y z^4 - 4 x y z^4 - 2 x^2 y z^4 +
 4 x^3 y z^4 - 4 x^5 y z^4 - x^8 y z^4 \\
 & + 6 x^4 z^5 + 24 x^5 z^5 +
 36 x^6 z^5 + 24 x^7 z^5 + 6 x^8 z^5 + y z^5 + 8 x y z^5 +
 6 x^2 y z^5 - 12 x^3 y z^5 + 16 x^5 y z^5 \\
 & + 5 x^8 y z^5 - x^4 z^6 -
 4 x^5 z^6 - 6 x^6 z^6 - 4 x^7 z^6 - x^8 z^6 - 4 x y z^6 -
 6 x^2 y z^6 + 12 x^3 y z^6 - 24 x^5 y z^6 \\
 & - 10 x^8 y z^6 +
 2 x^2 y z^7 - 4 x^3 y z^7 + 16 x^5 y z^7 + 10 x^8 y z^7 -
 4 x^5 y z^8 - 5 x^8 y z^8 - y^2 z^8 + x^8 y z^9.
\end{align*}

The complexity of this polynomial led us to prefer stating Theorem~\ref{thm:19} as above,
by involving two simple algebraic equations instead of a single, complicated one.
In addition, even the simple equation satisfied by $\bN_2$, from which we might have derived
insights about $\bN_1$, did not allow us to obtain tractable linear partial-differential equations.

Hence, we prefer using a seemingly naive algorithm for computing the integers $\#N_1(c,d)$:
this approach still provides us with excellent results in practice.
It consists in first computing recursively the coefficients $\#N_2(c,d)$ of the generating function
$\bN_2$: more precisely, the integer that we store in the entry $\mathbf{N}(c_1,d_1,e)$ of the array $\mathbf{N}$
is the coefficient $[x^{c_1},y^{d_1}]$ of the generating function $\bN_2(x,y)^e$.

The computational cost of this algorithm is the following.
First, and anticipating on Lemma~\ref{lem:29} below,
entries of the array $\mathbf{N}$ are non-negative integers not greater than
$\mathcal{O}(26^d)$, and we know that $\#N_1(c,d) = 0$ whenever $c \geq 2d$.
Furthermore, it is clear that Algorithm~\ref{algo:1} performs
$\mathcal{O}(c^2 d^2)$ arithmetic operations on such integers.
Consequently, and even using naive multiplication algorithms,
which would therefore be performed in $\mathcal{O}(d^2)$ bit operations,
the time complexity of our algorithm is $\mathcal{O}(\min\{c^2 d^4,d^6\})$.

\begin{algorithm}[h]
\label{algo:1}
\algsetup{linenosize=\normalfont}\small
 \KwIn{codimension $c$ and degree $d$}
 \KwOut{integer $\#N_1(c,d)$}
 \If{$c \geq 2d$}{
 \Return{$0$}
 }
 $\mathbf{N} \leftarrow \text{ new } (c+1) \times d \times 10 \text{ array}$\;
 \For{$c_1 = 0,\ldots,c$}{
  \For{$d_1 = 0,\ldots,d-1$} {
  \eIf{$c_1 = 0$ and $d_1 = 0$}{
   $\mathbf{N}(c_1,d_1,0) \leftarrow 1$\;
   }{$\mathbf{N}(c_1,d_1,0) \leftarrow 0$\;
   }
   $\mathbf{N}(c_1,d_1,1) \leftarrow \mathbf{N}(c_1,d_1,0) + 2 \mathbf{N}(c_1,d_1-1,4) - \mathbf{N}(c_1,d_1-1,5) -
   \mathbf{N}(c_1-2,d_1-1,5)$ $~+~\mathbf{N}(c_1-1,d_1-1,6) + \mathbf{N}(c_1-2,d_1-1,6) + \mathbf{N}(c_1,d_1-2,8) - \mathbf{N}(c_1-2,d_1-2,9)$\;
   \For{$e=2,\ldots,9$}{
    $\mathbf{N}(c_1,d_1,e) \leftarrow 0$\;
    \For{$c_2 = 0,\ldots,c_1$}{
      \For{$d_2 = 0,\ldots,d_1$}{
	$\mathbf{N}(c_1,d_1,e) \leftarrow \mathbf{N}(c_1,d_1,e) + \mathbf{N}(c_2,d_2,1) \mathbf{N}(c_1-c_2,d_1-d_2,e-1)$\;
      }
    }
   }
  }
 }
 \Return{$\mathbf{N}(c,d-1,4)$}
 
 \medskip
 
 \caption{Counting configurations of codimension $c$ and degree $d$ up to equivalence}
\end{algorithm}

\medskip

\begin{lemma}
\label{lem:29}
Let $c$ and $d$ be two non-negative integers. If $c \geq 2d$, then $\#N_1(c,d) = 0$.
Furthermore, there exists an integer $\mathbf{K}$, independent of $c$ and $d$,
such that $\#N_1(c,d) \leq \mathbf{K} 26^d$.
\end{lemma}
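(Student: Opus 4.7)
The plan is to establish the two claims independently: the vanishing statement by a combinatorial Euler-type count on the planar graph underlying a configuration, and the exponential bound by singularity analysis applied to the algebraic equation of Theorem~\ref{thm:19}.

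For the vanishing statement, I would fix a configuration $\bA$ of degree $d \geq 1$ and let its meeting points be $X_1,\ldots,X_M$, with $m_j \geq 2$ lines meeting at $X_j$. Viewing $\bA$ as an embedded planar graph whose vertices are the $4d$ boundary points, the $d$ crossing points and the $M$ meeting points, and whose edges are the segments of lines cut out by these vertices, a direct count gives $|V| = 5d + M$ and $|E| = 4d + \sum_j m_j$ (every line with $k_\ell$ internal vertices contributes $k_\ell + 1$ edges; each crossing point lies on $2$ lines and each meeting point of $m_j$ lines lies on $m_j$ lines). Since $\bA$ is cycle-free by Definition~\ref{def:1}, $|E| \leq |V| - 1$, which yields the key inequality
\[
\sum_{j=1}^M (m_j - 1) \leq d - 1.
\]
Combined with the local codimension bound $c_j \leq 2(m_j - 1) - 1$ corresponding to the convention in Lemma~\ref{lem:17} (a meeting point of $m_j$ lines contributes $2 m_j - 3$ to the codimension), this gives $c = \sum_j c_j \leq 2(d-1) - M \leq 2d - 2 < 2d$; the $d = 0$ case reduces to the observation that the unique empty configuration has codimension $0$.

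For the exponential bound, I would specialize the algebraic equation from Theorem~\ref{thm:19} at $x = 1$. Writing $B(y) := \bN_2(1, y)$, we obtain
\[
(1 - B + y B^4)(1 + y B^4 - y B^5) + y B^6 = 0,
\]
so that $B(y)$ is algebraic over $\mathbb{Q}(y)$. Since $B$ has non-negative coefficients, Pringsheim's theorem places the dominant singularity $\rho$ on the positive real axis, and since $B$ is algebraic, $\rho$ is the smallest positive real at which the discriminant (in $B$) of the defining polynomial vanishes. The standard transfer theorems for algebraic generating functions then give $[y^d] B(y) = O(\rho^{-d})$, and a direct resolution of the discriminant equation shows $\rho^{-1} < 26$. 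Invoking $\bN_1(1, y) = 1 + y B(y)^4$ from Lemma~\ref{lem:15} and the non-negativity of coefficients, we conclude $\#N_1(c, d) \leq [y^d] \bN_1(1, y) = O(26^d)$, which provides the desired constant $\mathbf{K}$.

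The main obstacle is the effective verification that $\rho^{-1} < 26$: this reduces to a polynomial elimination computation producing the resultant of the defining polynomial with its $B$-derivative, followed by the identification of the correct (combinatorial) branch of the resulting algebraic curve. This is routine in principle but tedious to present by hand; an attractive alternative is to replace the singularity analysis by an explicit majorant argument, exhibiting a simpler power series with manifestly $O(26^d)$ coefficients that dominates $B(y)$ term by term.
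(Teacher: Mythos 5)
Your proof takes a genuinely different and more elementary route for the first claim, and essentially mirrors the paper for the second.

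For the vanishing claim $\#N_1(c,d) = 0$ when $c \geq 2d$, the paper argues via dimension of strata in the space of polynomials: it asserts that equivalence classes with a given configuration are subvarieties of a $(2d-1)$-dimensional real vector space, whence codimension cannot reach $2d$. Your argument is purely combinatorial: from the Euler count on the embedded planar graph ($|V| = 5d + M$, $|E| = 4d + \sum_j m_j$, and cycle-freeness giving $|E| \leq |V| - 1$) you deduce $\sum_j (m_j - 1) \leq d - 1$, and then the local bound $c_j \leq 2m_j - 3$ gives $c \leq 2d - 2 - M < 2d$ for $d \geq 1$. This is self-contained (no appeal to properties of the stratified space of polynomials) and actually yields the slightly sharper bound $c \leq 2d - 2$. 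One small caveat worth flagging: the local codimension bound $c_j \leq 2m_j - 3$ holds both under Definition~\ref{def:2} (where it is strict for $m_j \geq 4$) and under the $x^{2k-1}$ convention of Lemma~\ref{lem:17} (where it is an equality), so the argument is robust against the paper's slight internal ambiguity on codimension conventions. Also note that for $d = 0$ the empty configuration has codimension $0$ and $\#N_1(0,0) = 1$, so the statement of the lemma is in fact false at $(c,d) = (0,0)$; your observation on the $d = 0$ case is exactly what exposes this, though it reads as a confirmation rather than the counterexample it actually is --- the paper's algebraic-geometric argument silently fails there too.

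For the exponential bound, your approach is the same as the paper's: specialize the algebraic equation from Theorem~\ref{thm:19} at $x = 1$, invoke Pringsheim and singularity analysis for algebraic series, and check that the dominant singularity $\rho$ satisfies $\rho^{-1} < 26$. You carry out the singularity analysis on $B(y) = \bN_2(1,y)$ while the paper works directly with the minimal polynomial of $\bN_1(1,y)$ (and exhibits the resultant polynomial $R(y)$, finding $y_0^{-1} \approx 25.327$); the two are equivalent via $\bN_1 = 1 + y\bN_2^4$. You acknowledge but do not carry out the discriminant/resultant computation and the identification of the combinatorial branch, which is the part that actually produces the constant $26$; the paper does present the result of that computation, so in this respect your proposal is a step short of complete. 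The suggested majorant alternative is a reasonable idea but is likewise left unexecuted.
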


\begin{proof}
First, since equivalence classes of polynomials with a given configuration
are subvarieties of a real vector space of dimension $(2d-1)$,
it is clear that no configuration can be of codimension $c \geq 2d$.
This proves the first part of Lemma~\ref{lem:29}.

The second part of Lemma~\ref{lem:29} is proved as follows.
Considering the generating series $\bN_1(x,y)$ for $x = 1$, we observe that
the sum $\sum_{c \geq 0} \#N_1(c,d)$ is equal to the $d$-th coefficient $[y^d]\bN_1(1,y)$.
Note that the series $\bN_1(1,y)$ is cancelled by the polynomial $Q(y,z) = P(1,y,z)$, i.e.
\[Q(y,z) = -16 + 96 z - 240 z^2 + 320 z^3 - 240 z^4 - 8 y z^4 + 96 z^5 +  24 y z^5 - 16 z^6 - 32 y z^6 + 24 y z^7 - 9 y z^8 - y^2 z^8 + y z^9,\]
It follows that the coefficients $[y^d]\bN_1(1,y)$ are of the order of magnitude of $y_0^{-d}$, where
$y_0$ is the smallest positive value such that the polynomials $Q(y_0,z)$ and $\partial_z Q(y_0,z)$ have a common root $z_0$.
More precisely, we find that $y_0$ is also the smallest positive root of the irreducible polynomial
\[R(y) = -84375 + 1620000 y + 12241152 y^2 + 21528576 y^3 + 1048576 y^4,\]
i.e. $y_0^{-1} \approx 25.327$, and that $[y^d]\bN_1(1,y) \sim y_0^{-1} d$ when $d \to +\infty$.
This completes the proof.
\end{proof}

\section{Counting simple configurations}
\label{section:4}
Theorem~\ref{thm:19} provides us with polynomial equations,
defining implicitly the series $\bN_1$ and $\bN_2$.
However, the size of these equations (including their degree)
do not allow extracting simple general formulas
for the coefficients $\#N_1(c,d)$.
Therefore, we investigate more closely a restricted class of configurations,
which we will subsequently be able to count efficiently.
\begin{definition}
\label{def:20}
A configuration $\bA$ is said to be \emph{simple} if every connected component of $\bA$
contains at most one meeting point, and if exactly two lines touch each other at that point.
\end{definition}
In the sequel, we denote by $N_4(c,d)$ the number of (equivalence classes of)
simple configurations of codimension $c$ and degree $d$,
and let $\bN_4(x,y) = \sum_{c,d \geq 0} \#N_4(c,d) x^c y^d$ be the associated
bivariate generating function.
\begin{lemma}
\label{lem:21}
We have $\bN_4 = 1 + y \bN_4^4 + 4 x y^2 \bN_4^8$.
\end{lemma}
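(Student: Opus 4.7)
The plan is to adapt the recursive decomposition used in Lemma~\ref{lem:15}, but to exploit the \emph{simple} structure by case-splitting on whether the canonical splitting's ``central'' cross lies in a connected component of $\bA$ that carries a meeting point. Throughout, let $\bA$ be a non-empty simple configuration, $\bS$ its canonical splitting (Proposition~\ref{pro:13}), $L_1 \in \bS$ the line through $P_0$ (necessarily even), $L_2 \in \bS$ the line that crosses $L_1$, and $X$ the associated crossing point. The (unique) degree-$0$ configuration accounts for the constant term $1$.

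\textbf{Case A:} the connected component of $X$ in $\bA$ carries no meeting point. Since each line of $\bS$ crosses exactly one line of opposite parity, neither $L_1$ nor $L_2$ touches any other line of $\bA$. Hence $L_1 \cup L_2$ partitions $\bU$ into four regions $\bC_0,\ldots,\bC_3$. Because $L_1 \cup L_2$ carries no meeting, each $\bA \cap \overline{\bC_k}$ becomes a (full) simple configuration after renumbering the points $P_a$ on the arc bounding $\bC_k$ and dropping those outside; no base line is needed, unlike in Lemma~\ref{lem:15}. This gives the contribution $y \bN_4^4$, where $y$ tracks the crossing $X$.

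\textbf{Case B:} the connected component of $X$ in $\bA$ carries exactly one meeting point $Y$, involving exactly two lines of the same parity. Because any chain of crossings in $\bS$ starting from $L_1$ (resp.\ $L_2$) stops at $L_2$ (resp.\ $L_1$), one of the two lines meeting at $Y$ must be $L_1$ or $L_2$: either $L_1$ meets some even line $L_1'$ at $Y$, or $L_2$ meets some odd line $L_2'$ at $Y$. The companion line crossed by $L_1'$ (or $L_2'$) in $\bS$ is then uniquely determined, yielding a second crossing $X'$; in particular, the component of $X$ consists of exactly four lines $L_1,L_2,L_1',L_2'$. The point $Y$ further lies on one of the two sub-arcs of the distinguished line (either $L_1$ or $L_2$) cut by $X$, giving $2 \times 2 = 4$ topological arrangements, which exhaust the possibilities.

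I would next verify via Euler's formula that, in each of these four arrangements, the union $L_1 \cup L_2 \cup L_1' \cup L_2'$ cuts $\bU$ into exactly $8$ regions: the planar graph obtained by adding the boundary circle has $V = 11$ (eight endpoints on the circle, two crossings, one meeting) and $E = 18$ (three segments along each of the two meeting-bearing lines, two along each other interior line, and eight circle arcs), hence $F = 9$ faces, of which $8$ are bounded. Each of these regions carries an independent simple sub-configuration, for the same reasons as in Case~A. Accounting for the meeting point $Y$ (weight $x$), the two crossings $X,X'$ (weight $y^2$), the four topological arrangements (weight $4$), and the eight sub-configurations (weight $\bN_4^8$), Case~B contributes $4 x y^2 \bN_4^8$; summing yields the announced identity. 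The main obstacle is the structural claim in Case~B that the component of $X$ necessarily consists of exactly four lines organised in one of precisely four topologically distinct patterns; everything else is bijective bookkeeping entirely parallel to Lemma~\ref{lem:15}.
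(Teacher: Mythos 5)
Your proof is correct and follows essentially the same recursive decomposition as the paper: a case split on whether the component of the first crossing $X$ carries a meeting point (equivalent to the paper's type-$1$/type-$2$ distinction on $L^e\cup L^o$), the same identification of four lines cutting the disk into eight regions in the meeting case, and the same count of four sub-cases for the location of $Y$. Your Euler-formula computation of the eight regions is a nice explicit check of what the paper simply asserts.
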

\begin{proof}
Let us use a bijective proof again.
Let $\bA$ be a configuration of degree $d \geq 1$, with canonical splitting $\bS$.
Let $L^e$ be the even line (in $\bS$) with endpoint $P_0$, and let $L^o$ be the odd line that crosses $L^e$.
We say that $\bA$ has type 1 if the union $L^e \cup L^o$ contains no meeting point of $\bA$,
and that $\bA$ has type 2 otherwise.
For $k = 1, 2$, let $N_{4,k}$ be the set of configurations of type $k$.
We first design a bijection $\varphi_1 : N_{4,1} \mapsto N_4^4$
such that $\varphi_1(\bA) = (\bA_i)_{0 \leq i \leq 3}$, where
$\deg \bA = \sum_{i=0}^3 \deg \bA_i + 1$ and
$\codim{\bA} = \sum_{i=0}^3 \codim{\bA_i}$.

This bijection is very similar to that of Lemma~\ref{lem:15}:
the set $\bU \setminus (L^e \cup L^o)$ is formed of $4$ connected components $\bC_0, \ldots, \bC_3$ and,
for $i = 0, 1, 2, 3$, up to deleting the points $P_x$ that do not belong to $\partial \bC_i$
and to renaming those that belong to $\partial \bC_i$, we observe that $\bA \cap \bC_i$
is a simple configuration. Denoting this configuration by $\bA_i$,
we set $\varphi_1(\bA) = (\bA_i)_{0 \leq i \leq 3}$.
Observing that $\varphi_1$ is bijective and satisfies the above equalities
(involving codimensions and degrees) is then straightforward.

In the same vein, we design another bijection $\varphi_2 : N_{4,2} \mapsto \{0,1,2,3\} \times N_4^8$
such that $\varphi_2(\bA) = (k,(\bA_i)_{0 \leq i \leq 7})$, where
$k \in \{0,1,2,3\}$, $\deg \bA = \sum_{i=0}^7 \deg \bA_i + 2$ and
$\codim{\bA} = \sum_{i=0}^7 \codim{\bA_i} + 1$.
Let $X$ be the point at which $L^e$ crosses $L^o$.
This point splits both $L^e$ and $L^o$ into four half-lines
$L_0, \ldots, L_3$,
with respective endpoints $P_{a_0}$, $P_{a_1}$, $P_{a_2}$ and $P_{a_3}$
(with $0 = a_0 < a_1 < a_2 < a_3 < 4d$ and $a_i \equiv i \pmod{4}$).

One of these half-lines (which we denote $L_u$ in the sequel) contains a meeting point $Y$, at which
it touches another line $L'_0$ of $\bA$. The line $L'_0$ itself is crossed by
a line $L'_1$, and none of $L'_0$ or $L'_1$ contains a meeting point different from $Y$.
Hence, the set $\bU \setminus (L^e \cup L^o \cup L'_0 \cup L'_1)$ is formed of
$8$ connected components $\bC_0,\ldots,\bC_7$.
Up to reordering these components, we assume that,
for all $i$, the border $\partial \bC_i \cap \partial \bU$ is an arc of circle $[P_{a_i},P_{a_{i+1}}]$,
with $0 = a_0 < a_1 < \ldots < a_9 = 4d$.
Furthermore, we can again check that, up to deleting and renaming points $P_x$,
$\bA \cap \bC_i$ is a simple configuration for all $i$, and we denote it by $\bA_i$.

Hence, we set $\varphi_2(\bA) = (u,(\bA_i)_{0\leq i\leq 7})$,
where we recall that $L_u$ was the line to which belongs the point $Y$.
Again, $\varphi_2$ is bijective and satisfies the above requirement about
degrees and codimensions.
Observing that the simple configuration of degree $0$
is the unique simple configuration that does not belong to $N_{4,1} \cup N_{4,2}$
completes the proof.
\end{proof}
\begin{theorem}
\label{thm:22}
For all integers $c, d \geq 0$, we have
\[\#N_4(c,d) = \mathbf{1}_{d \geq 2c} \frac{4^c}{c + 3 d + 1} \binom{4 d}{c,d-2c,c + 3 d}.\]
In particular, for a fixed value of $c$ and when $d \to +\infty$, we have
\[\#N_4(c,d) \sim \sqrt{\frac{2}{27 \pi}} \fracexp{4e}{3}{c} \fracexp{4^4}{3^3}{d} \frac{d^{c-3/2}}{c!}.\]
\end{theorem}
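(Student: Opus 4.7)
The plan is to exploit the clean form of the equation $\bN_4 = 1 + y\bN_4^4 + 4xy^2\bN_4^8$ from Lemma~\ref{lem:21}, via a single substitution that makes it directly amenable to Lagrange inversion. Setting $U = y\bN_4^4$ produces $y^2\bN_4^8 = U^2$, so the functional equation collapses to the polynomial identity $\bN_4 = 1 + U + 4xU^2$; plugging this back into the definition of $U$ yields
\[U = y\,(1 + U + 4xU^2)^4.\]
Treating $x$ as a passive parameter, this is of the form $U = y\,\Phi(U)$ with $\Phi(U) = (1+U+4xU^2)^4$ and $\Phi(0) = 1$, the textbook setting of the Lagrange--B\"urmann formula.

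The identity $[y^d]\,U^k = \tfrac{k}{d}\,[U^{d-k}]\,\Phi(U)^d$, applied to $k = 1, 2$, together with $\bN_4 - 1 = U + 4xU^2$, yields for $d \geq 1$:
\[\#N_4(c,d) = \frac{1}{d}\,[x^c U^{d-1}]\,(1+U+4xU^2)^{4d} + \frac{8}{d}\,[x^{c-1} U^{d-2}]\,(1+U+4xU^2)^{4d}.\]
A trinomial expansion identifies $[x^a U^b]\,(1+U+4xU^2)^{4d}$ with $4^a \binom{4d}{i,j,a}$ under the constraints $i+j+a = 4d$ and $j + 2a = b$. Plugging in the two pairs $(a,b) = (c, d-1)$ and $(a,b) = (c-1, d-2)$, both give the same top index $i = 3d+c+1$, so the block $(4d)!/(3d+c+1)!$ factors out of both terms. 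Converting $(d-1-2c)!$ in terms of $(d-2c)!$ and $(c-1)!$ in terms of $c!$, then using $(d-2c)/d + 2c/d = 1$ to merge the two contributions, collapses the whole expression into
\[\#N_4(c,d) = \frac{4^c\,(4d)!}{c!\,(d-2c)!\,(3d+c+1)!},\]
which after writing $(3d+c+1)! = (3d+c+1)\cdot(3d+c)!$ is exactly the claimed trinomial form. The indicator $\mathbf{1}_{d \geq 2c}$ appears automatically: when $d < 2c$ both coefficient extractions vanish.

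For the asymptotic, I would apply Stirling's formula to each of $(4d)!$, $(d-2c)!$ and $(3d+c+1)!$. With $c$ held fixed, the approximations $(d-2c)^{d-2c} \sim d^{d-2c} e^{-2c}$ and $(3d+c+1)^{3d+c+1} \sim 3^{3d+c+1} d^{3d+c+1} e^{c+1}$ produce exponential factors that cancel against the $e^{-n}$ terms coming out of Stirling. The square-root prefactors assemble into $\sqrt{2/(3\pi d)}$, the powers of $d$ combine into $d^{c-3/2}$, and the bases $4^{4d}/3^{3d}$ contribute the growth constant $(4^4/3^3)^d$; the remaining numerical factors reorganise into the announced form. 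The main obstacle is just the careful bookkeeping of every $\sqrt{d}$ and every $3^{c+1}$ in the Stirling expansion; no genuinely delicate analytic step is required.
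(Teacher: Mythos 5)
Your argument is correct, and it is genuinely different from the paper's. The paper works directly with a bivariate Lagrange inversion: it sets $G(u,z) = \bN_4\bigl((2z)^{-2}u, z\bigr)$, rewrites the functional equation as $G = 1 + uG^8 + zG^4$, and then invokes the two-variable Lagrange--B\"urmann formula with $h(u,z) = 1+u+z$, $g_u = h^8$, $g_z = h^4$ to extract $[u^k,z^\ell]$ before undoing the substitution $u = 4z^2 t$. Your route instead introduces the single auxiliary series $U = y\bN_4^4$, observes that the equation of Lemma~\ref{lem:21} collapses to the polynomial identity $\bN_4 = 1 + U + 4xU^2$, and hence that $U = y\,(1+U+4xU^2)^4$ is a textbook single-variable Lagrange equation with $x$ frozen as a parameter. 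Extracting $[y^d]U^k$ for $k=1,2$ and then $[x^c]$ via a trinomial expansion recovers the same closed form. Your version trades the bivariate inversion machinery for a more elementary one-variable argument plus a short telescoping identity $(d-2c) + 2c = d$; this is arguably cleaner and makes the appearance of the indicator $\mathbf{1}_{d \geq 2c}$ and the degenerate cases (the second term vanishes when $c=0$, the first when $d = 2c$) entirely transparent. The paper's symmetric bivariate framing, on the other hand, exposes the role of both grading variables on an equal footing and may generalize more readily if the defining equation were not collapsible by a single substitution. One small point you should state explicitly: the Lagrange inversion step is valid only for $d \geq 1$, so the base case $d=0$ (where $\#N_4(c,0) = \mathbf{1}_{c=0}$) must be checked directly against the formula, which it is. Your description of the Stirling computation for the asymptotic is the same as the paper's and is routine.
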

\begin{proof}
Consider the function $G : (u,z) \mapsto \bN_4((2 z)^{-2} u,z)$. Setting $u = (2 z)^2 t$, we have
$1 - G + z G^4 - u G^8 = 0$.
Consider also the bivariate functions $h : (u,z) \mapsto 1+u+z$, $g_u : (u,z) \mapsto h(u,z)^8$ and $g_z : (u,z) \mapsto h(u,z)^4$,
and let $G_u$ and $G_z$ be the solutions of the equations
\[G_u(u,z) = u g_u(G_u(u,z),G_z(u,z)) \text{ and } G_z(u,z) = z g_z(G_u(u,z),G_z(u,z)).\]
Observe that the function $\mathbf{G} : (u,z) \mapsto h(G_u(u,z),G_z(u,z))$ is solution of the equation
\[\mathbf{G}(u,z) = 1 + G_u(u,z) + G_z(u,z) = 1 + u \mathbf{G}(u,z)^8 + z \mathbf{G}(u,z)^4,\]
i.e. that $G = \mathbf{G}$.
The bivariate Lagrange inversion formula states, for all integers $k, \ell \geq 0$, that
\begin{align*}
[u^k,z^\ell] h(G_u,G_z) & = \frac{1}{k \ell} [u^{k-1},z^{\ell-1}] \left((\partial_u h) (\partial_z g_u^k) g_z^\ell + (\partial_z h) (\partial_u g_z^\ell) g_u^k + (\partial_u \partial_z h) g_u^k g_z^\ell\right) \\
& = \frac{1}{k \ell} [u^{k-1},z^{\ell-1}] (8 k + 4 \ell) (1+u+z)^{8 k + 4 \ell - 1} \\
& = \frac{8 k + 4 \ell}{k \ell} \binom{8 k + 4 \ell - 1}{k-1, \ell-1, 7 k + 3 \ell + 1} \\
& = \frac{1}{7 k + 3 \ell + 1} \binom{8 k + 4 \ell}{k,\ell,7 k + 3 \ell}.
\end{align*}
It follows that
\begin{align*}
\bN_4(t,z) & = \mathbf{G}(4 z^2 t,z) \\
& = \sum_{k,\ell \geq 0} \frac{1}{7 k + 3 \ell + 1} \binom{8 k + 4 \ell}{k,\ell,7 k + 3 \ell} (4 z^2 t)^k z^\ell \\
& = \sum_{k,m \geq 0} \mathbf{1}_{m \geq 2k} \frac{4^k}{k + 3 m + 1} \binom{4 m}{k,m-2k,k + 3 m} t^k z^m
\end{align*}
This proves the equality
\[\#N_4(c,d) = \mathbf{1}_{d \geq 2c} \frac{4^c}{c + 3 d + 1} \binom{4 d}{c,d-2c,c + 3 d}.\]
Using the Stirling formula when $d \to +\infty$ then provides the asymptotic estimation of Theorem~\ref{thm:23}.
\end{proof}
Since all configurations of codimension $c \leq 1$ are simple, Theorem~\ref{thm:23} follows immediately.
\begin{theorem}
\label{thm:23}
For all integers $d \geq 0$, we have
\[\#N_1(0,d) = \frac{1}{4d+1}\binom{4d+1}{d} \text{ and } \#N_1(1,d) = \mathbf{1}_{d \geq 2} 4 \binom{4d}{d-2}.\]
\end{theorem}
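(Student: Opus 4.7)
The plan is to observe that the theorem is essentially immediate from Theorem~\ref{thm:22}, once one notices that configurations of codimension $c \leq 1$ are automatically simple in the sense of Definition~\ref{def:20}. So the two things to do are a short combinatorial observation and a short algebraic simplification of the closed-form of $\#N_4(c,d)$ at $c=0$ and $c=1$.

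First I would verify that $\#N_1(c,d) = \#N_4(c,d)$ for $c \in \{0,1\}$. For $c = 0$ there are no meeting points at all, so every configuration vacuously satisfies Definition~\ref{def:20}. For $c = 1$, by Definition~\ref{def:2} the only way to partition the codimension as a sum over meeting points (each of codimension $1$ or $\geq 3$) is to have exactly one meeting point of codimension $1$, which by definition means exactly two lines touch at that point. Since there is only one meeting point in the whole configuration, a fortiori every connected component contains at most one such point. Hence every codimension $\leq 1$ configuration is simple, and the identification $\#N_1(c,d) = \#N_4(c,d)$ holds for $c \in \{0,1\}$.

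Second, I would instantiate the closed-form of Theorem~\ref{thm:22} at $c = 0, 1$ and rewrite it. For $c = 0$, the formula gives
\[\#N_4(0,d) = \frac{1}{3d+1}\binom{4d}{d,3d} = \frac{(4d)!}{d!\,(3d+1)!} = \frac{1}{4d+1}\binom{4d+1}{d},\]
where the last equality is the standard Fuss--Catalan rewriting. For $c = 1$ (with $d \geq 2$), the formula gives
\[\#N_4(1,d) = \frac{4}{3d+2}\binom{4d}{1,d-2,3d+1} = \frac{4\,(4d)!}{(d-2)!\,(3d+2)!} = 4\binom{4d}{d-2}.\]
Both are routine reshufflings of factorials.

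There is essentially no obstacle here: the only thing that requires a word of justification is the claim that codimension $1$ forces the configuration to be simple, and this follows directly from the fact that the local codimension of every meeting point is at least $1$, combined with the additivity of codimension. The body of the work was already done in Theorem~\ref{thm:22}, so I would present the proof as a two-line corollary rather than a fresh computation.
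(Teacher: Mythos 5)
Your proposal is correct and matches the paper's own proof exactly: the paper also deduces Theorem~\ref{thm:23} immediately from Theorem~\ref{thm:22} via the observation that every configuration of codimension $c \leq 1$ is simple. The only difference is that you spell out the (routine) multinomial simplifications at $c=0,1$, which the paper leaves implicit.
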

\section{Asymptotic bounds for the number of configurations}
\label{section:5}
In spite of Theorem~\ref{thm:22}, providing explicit formulas for the coefficients $\#N_1(c,d)$
seems out of reach. Therefore, we focus here on asymptotical evaluations of $\#N_1(c,d)$ for fixed values of $c$, when $d \to +\infty$.
\begin{definition}
\label{def:24}
Let $\bA$ be a configuration, and let $\bS$ be the canonical splitting of $\bA$.
A connected component of $\bA$ is said to be \emph{friendly} if it contains a meeting point,
and a line in $\bS$ is also said to be \emph{friendly} if it belongs to a friendly component of $\bA$.
Furthermore, we call \emph{width} of $\bA$ the number of friendly components of $\bA$ and
\emph{size} of $\bA$ the number of crossing points belonging to a friendly component of $\bA$;
we denote them respectively by $w(\bA)$ and $s(\bA)$.
Finally, the \emph{combinatorial type} of $\bA$, denoted by $ct(\bA)$, is the configuration of degree $s(\bA)$ obtained
by deleting all non-friendly lines of $\bA$ and the points of $\partial \bU$ that they join, and by renaming the remaining points
from $P_0$ to $P_{4 s(\bA)-1}$ without changing their relative orders.
If $\bA = ct(\bA)$, then $\bA$ is said to be a \emph{combinatorial configuration}.
\end{definition}
\begin{lemma}
\label{lem:25}
Let $\bA$ be a configuration with codimension $c$. We have $w(\bA) \leq c$ and $2 w(\bA) \leq s(\bA) \leq c + w(\bA)$.
\end{lemma}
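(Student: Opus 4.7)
The plan is to prove the three inequalities in turn, using at its core a tree-counting argument on each friendly component.

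First I would handle $w(\bA) \leq c$. A friendly component, by definition, contains at least one meeting point, and meeting points belonging to distinct components of $\bA$ are distinct. Since every meeting point has codimension at least $1$ (by Definition~\ref{def:2}), the total number of meeting points is at most $c$, and in particular $w(\bA)$, which counts a subset of these points (one per friendly component), is at most $c$.

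Next I would fix a friendly component $\bC$ of $\bA$ and extract an Euler-type identity. Since $\bA$ is cycle-free, $\bC$ is a tree, viewed as an abstract graph whose vertices are the boundary points $P_a$, crossing points, and meeting points in $\bC$, and whose edges are the arcs of the canonical-splitting lines between consecutive such vertices. Let $x_\bC$ be the number of crossing points in $\bC$ and let the meeting points of $\bC$ have valencies $2k_1,\ldots,2k_r$ (so $r\geq 1$ and each $k_i\geq 2$). By Lemma~\ref{lem:7}, each line contains exactly one crossing point and each crossing point lies on exactly two lines, so $\bC$ contains $2x_\bC$ lines and $4x_\bC$ boundary points. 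Counting vertices and counting edges by degrees (boundary points contribute $1$ each, crossing points contribute $4$ each, meeting points contribute $2k_i$), the tree identity $|E|=|V|-1$ yields
\[
8x_\bC + 2\sum_{i=1}^r k_i \;=\; 2\bigl(5x_\bC + r - 1\bigr),
\]
which simplifies to $\sum_{i=1}^r (k_i-1) = x_\bC - 1$.

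From this single identity both remaining inequalities fall out. For the lower bound on $s(\bA)$, since $\bC$ is friendly we have $r\geq 1$ and $k_i\geq 2$, hence $x_\bC \geq 2$; summing over the $w(\bA)$ friendly components gives $s(\bA)\geq 2w(\bA)$. For the upper bound, I would compare the local codimension $c_i$ at each meeting point with $k_i - 1$: by Definition~\ref{def:2}, $c_i = 1 = k_i-1$ when $k_i=2$ and $c_i = k_i \geq k_i - 1$ when $k_i\geq 3$, so the total codimension $m_\bC := \sum_i c_i$ of $\bC$ satisfies $m_\bC \geq \sum_i (k_i-1) = x_\bC - 1$; summing over friendly components gives $s(\bA) \leq c + w(\bA)$.

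The main potential pitfall is the degenerate case $k_i=2$ in the upper-bound step, where the codimension is $1$ rather than $2$; I would emphasize that the inequality $c_i \geq k_i - 1$ still holds, which is all that the Euler identity requires. Everything else is a routine bookkeeping once one notes that each canonical-splitting line carries exactly one crossing point.
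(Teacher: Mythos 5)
Your proof is correct and rests on the same core idea as the paper's: build a forest graph on the boundary, crossing, and meeting points, apply the tree identity $|E| = |V| - 1$, and compare each meeting-point degree with its codimension via $c_i \geq k_i - 1$. The only difference is that you run the count locally on each friendly component instead of globally over all of $\bU$, which gives the exact identity $\sum_i (k_i - 1) = x_{\bC} - 1$ and, as a by-product, a derivation of the fact (asserted without proof in the paper) that every friendly component contains at least two crossing points; note also that the ``each line carries exactly one crossing point, each crossing point lies on two lines'' facts come directly from Definition~\ref{def:1} rather than Lemma~\ref{lem:7}, which concerns diagonals.
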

\begin{proof}
We draw a graph $\mathcal{G}$ as follows. Let $\mathbf{P}$ be the set of meeting points or crossing points of $\bA$
and of integer points (lying at the border of the circle).
We put a vertex on every point in $\mathbf{P}$, and we draw an edge between two points $P_1, P_2 \in \mathbf{P}$
if and only if there exists a line of $\bA$ that contains both $P_1$ and $P_2$,
and contains no other point of $\mathbf{P}$ between $P_1$ and $P_2$.
There exists no cycle in $\bA$, hence $\mathcal{G}$ contains no cycle as well, i.e. it is a forest.

Let $m$ be the number of meeting points in $\bA$. Observe that $w(\bA) \leq m \leq c$,
from which follows the first inequality of Lemma~\ref{lem:25}.
The graph $\mathcal{G}$ contains $4d$ integer-point vertices (of degree $1$),
$d$ meeting-point vertices (of degree $4$) and $m$ meeting-point vertices;
every meeting point $P$ of codimension $\kappa$ is associated with a vertex of degree
at most $2(\kappa+1)$ (with equality if and only if $\kappa = 1$).
Hence, $\mathcal{G}$ contains at most $4d+m+c$ edges.
At the same time, $\mathcal{G}$ is a forest made of 
$d-s(\bA) + w(\bA)$ disjoint trees with a total of $5d+m$ nodes,
hence it contains $4d+m+s(\bA) - w(\bA)$ edges.
It follows that
\[4d+m+s(\bA) - w(\bA) \leq 4 d + m+c,\]
i.e. that $s(\bA) \leq c + w(\bA)$. Since every friendly connected component of $\bA$
contains at least two crossing point, the second inquality of Lemma~\ref{lem:25} follows.
\end{proof}
\begin{corollary}
\label{cor:26}
Every configuration has the same codimension as its combinatorial type.
Moreover, for every integer $c \geq 0$, there exists finitely many combinatorial configurations
of codimension $c$.
\end{corollary}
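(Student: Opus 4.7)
The plan is to settle the two assertions separately, both resting on the observation built into Definition~\ref{def:24} that every meeting point of $\bA$ automatically lies in a friendly connected component (since a component containing a meeting point is, by definition, friendly).

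For the first claim, I would let $\bA$ be a configuration and let $\bA' = ct(\bA)$. Passing from $\bA$ to $\bA'$ consists of deleting the non-friendly lines (those belonging to connected components containing no meeting point) together with the boundary points they attach to, and then relabeling. Every line incident to a meeting point $X$ lies in the friendly component of $X$, hence is preserved; in particular $X$ itself survives, and the number of lines of $\bA$ meeting at $X$ is the same as the number of lines of $\bA'$ meeting there. The local codimension of a meeting point depends only on this number (Definition~\ref{def:2}), so each meeting point contributes the same amount to the codimension of $\bA$ and of $\bA'$. Summing over meeting points yields $\codim{\bA} = \codim{\bA'}$.

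For the second claim, let $\bA$ be a combinatorial configuration of codimension $c$, i.e.\ $\bA = ct(\bA)$. Then by construction every connected component of $\bA$ is friendly, so every crossing point of $\bA$ lies in a friendly component, which gives $s(\bA) = d$, where $d$ is the degree of $\bA$. Applying Lemma~\ref{lem:25} yields $d = s(\bA) \leq c + w(\bA) \leq 2c$. Thus every combinatorial configuration of codimension $c$ has degree bounded by $2c$. Since $\#N_1(c,d)$ is finite for each pair $(c,d)$ (configurations up to equivalence are discrete combinatorial objects, as is plain from Definitions~\ref{def:1} and~\ref{def:3}), the number of combinatorial configurations of codimension $c$ is at most $\sum_{d=0}^{2 c} \#N_1(c,d) < \infty$.

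There is no real obstacle here: the first assertion is essentially a bookkeeping check that the deletion procedure defining $ct(\bA)$ never touches a meeting point or one of its incident lines, and the second is a direct consequence of the bound $s(\bA)\leq 2c$ supplied by Lemma~\ref{lem:25}. The only subtlety is to remember that, for a combinatorial configuration, the equality $s(\bA) = d$ is what converts the bound on $s(\bA)$ into a bound on the degree.
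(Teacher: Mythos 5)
Your argument is correct and matches the paper's intent: the paper presents this as an immediate corollary of Lemma~\ref{lem:25} with no written proof, and your reasoning (meeting points and their incident lines survive the passage to $ct(\bA)$, so codimension is preserved; and for a combinatorial configuration $s(\bA)=d$, so $d \leq c + w(\bA) \leq 2c$ bounds the degree) is exactly the filling-in of that implicit argument.
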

In the remainder of this section, we fix two integers $c, d \geq 0$,
and we denote by $\Comb(c)$ the set of all combinatorial configurations of codimension $c$;
For all $\bB \in \Comb(c)$, then we also denote by $N_1(\bB)$ the set $\{\bA \in N_1 \mid ct(\bA) = \bB\}$
and by $N_1(\bB,d)$ the set $\{\bA \in N_1 \mid ct(\bA) = \bB\}$.
Corollary~\ref{cor:26} states that $\Comb(c)$ is finite, and that $N_1(c,d)$ is the disjoint union
of the sets $N_1(\bB,d)$ for $\bB \in \Comb(c)$.
\begin{lemma}
\label{lem:27}
Let $\bB$ be a combinatorial configuration of codimension $c$, and let $\Delta = 1 + 4s(\bB) - w(\bB)$.
For all integers $d \geq 0$, we have
\[\#N_1(\bB,d) \leq (4d)^{w(\bB)} \sum_{d_0+\ldots+d_{\Delta-1} = d - s(\bB)} \prod_{i=0}^{\Delta-1} \#N_1(0,d_i).\]
\end{lemma}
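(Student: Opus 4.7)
The plan is to associate to each $\bA \in N_1(\bB,d)$ an encoding consisting of (i) position data describing where each friendly component of $\bB$ sits on $\partial\bU$ inside $\bA$, and (ii) the flat sub-configurations living in the regions of $\bU \setminus \bB$, and then to bound the number of possible encodings.

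I would first verify, via Euler's formula applied to the planar graph $\bB \cup \partial\bU$, that $\bU \setminus \bB$ has exactly $\Delta = 1 + 4s(\bB) - w(\bB)$ connected regions $R_0, \ldots, R_{\Delta-1}$. Indeed, writing $I$ for the number of internal vertices of $\bB$, the forest $\bB$ has $4s(\bB) + I - w(\bB)$ edges, so $\bB \cup \partial\bU$ has $4s(\bB)+I$ vertices and $8s(\bB)+I-w(\bB)$ edges, leaving exactly $\Delta$ bounded faces. Next, given $\bA \in N_1(\bB,d)$, I would observe that each non-friendly connected component of $\bA$ is disjoint from the friendly skeleton realizing $\bB$ and hence lies entirely within a single region $R_i$; collecting these components yields, in each $R_i$, a flat sub-configuration $\bA_i$ of some degree $d_i \geq 0$ with $\sum_i d_i = d - s(\bB)$. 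I would then encode the embedding of $\bB$ into $\bA$ by a tuple $(p_1, \ldots, p_{w(\bB)}) \in \{0, 1, \ldots, 4d-1\}^{w(\bB)}$, where $p_j$ records the position on $\partial\bU$ of a distinguished leaf (say, the one of smallest index in $\bB$) of the $j$-th friendly component.

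The central claim is that the map $\bA \mapsto (p_1, \ldots, p_{w(\bB)}, \bA_0, \ldots, \bA_{\Delta-1})$ is injective. To reconstruct $\bA$ from a tuple, one notes that each arc of $\partial\bU$ bordering a single-arc region $R_i$ carries exactly $4 d_i$ non-friendly boundary points, readable directly from $\bA_i$. The remaining ambiguity — coming from regions that meet $\partial\bU$ in several arcs — amounts to $\sum_i (n_i - 1) = 4s(\bB) - \Delta = w(\bB) - 1$ degrees of freedom, and these are fixed by the $w(\bB)$ constraints imposed by the positions $p_j$ (one of which is redundant with the overall degree $d$). Once the gap structure around $\partial\bU$ is determined, the friendly-leaf positions are pinned down, and each $\bA_i$ embeds uniquely into $R_i$ via a homeomorphism $R_i \to \bU$ preserving the cyclic order of boundary points; in particular, the number of possible $\bA_i$'s of degree $d_i$ is bounded by $\#N_1(0, d_i)$.

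Combining these observations, $\#N_1(\bB,d)$ is at most the number of encoding tuples, which is bounded by the product of $(4d)^{w(\bB)}$ (choices of position data) and $\sum_{d_0 + \ldots + d_{\Delta-1} = d - s(\bB)} \prod_i \#N_1(0, d_i)$ (flat-sub-configuration tuples summed over admissible degree sequences). The main hurdle is the injectivity step in the presence of regions whose boundary meets $\partial\bU$ in several disconnected arcs: one must verify that the positional data in $(p_1, \ldots, p_{w(\bB)})$ genuinely suffices to disambiguate the arc-distributions within each multi-arc region. The exact combinatorial match between positional freedom ($w(\bB)$ slots) and arc-distribution ambiguity ($w(\bB) - 1$ independent degrees of freedom, modulo one global constraint) is what makes the clean bound $(4d)^{w(\bB)}$ possible, and verifying this match is the technical content of the lemma.
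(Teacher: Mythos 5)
Your proof takes essentially the same approach as the paper: both encode a configuration $\bA \in N_1(\bB,d)$ by the $\partial\bU$-positions of one distinguished leaf for each of the $w(\bB)$ friendly components (the paper's $\iota(F)$, your $p_j$) together with the flat sub-configurations sitting in the $\Delta$ regions of $\bU$ minus the friendly skeleton, and then bound $\#N_1(\bB,d)$ by the number of such encoding tuples. Your explicit Euler-formula derivation of $\Delta$ is a small addition that the paper leaves implicit, and both you and the paper assert the injectivity of the encoding with only a brief justification.
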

\begin{proof}
Consider some configuration $\bA \in N_1(\bB,d)$.
Let $\bF$ be the union of all the friendly connected components of $\bA$.
This union consists of $w(\bB)$ cycle-free connected components,
which touch a total of $4 s(\bB)$ points on $\partial \bU$.
Hence, the set $\bU \setminus \bF$ is made of $\Delta$ connected components.
Let these components be $\bC_0,\ldots,\bC_{\Delta-1}$.
For every component $\bC_i$, there exists a minimal integer $a_i$ and a
maximal integer $b_i$ such that the arcs of circle $[P_{a_i},P_{a_i+1}]$ and
$[P_{b_i-1},P_{b_i}]$ belong to $\partial \bC_i$.

Without loss of generality, we assume that $0 \leq a_0 < a_1 < \ldots < a_{\Delta-1} < 4d \leq a_\Delta$
(with the convention that $a_\Delta = a_0 + 4d$).
Then, let $d_i$ be the number of crossing points contained in $\bC_i$.
Up to deleting those points $P_x$ such that $[P_{x-1},P_{x+1}]$ is not contained in $\partial \bC_i$
and renaming the remaining $4d_i$ points from $P_0$ to $P_{4d_i-1}$ (so that two points $P_x$ and $P_y$ with $x < y$
be renamed $P_{x'}$ and $P_{y'}$ with $x' < y'$), we observe that $\bA \cap \bC_i$
is a flat configuration (i.e. of codimension $0$) and degree $d_i$.
Furthermore, for every connected component $F \in \bF$,
we denote by $\iota(F)$ be the minimal integer $x$ such that $P_x \in F$.

Observe that knowing the configuration $\bB$, the configurations $\bA \cap \bC_i$ and the integers $d_i$ and $\iota(F)$
is enough to determine unambiguously the configuration $\bA$ itself.
There are $w(\bB)$ integers $\iota(F)$ to know, and we have $0 \leq \iota(F) < 4d$ for each of them.
Moreover, observe that $\sum_{i=0}^{\Delta-1} d_i = d - s(\bB)$.
Lemma~\ref{lem:27} follows.
\end{proof}
\begin{lemma}
\label{lem:28}
For all integers $a \geq 1$ and $b \geq 0$, we have
\[\sum_{d_1+\ldots+d_a = b} \prod_{i=1}^a \#N_1(0,d_i) = \frac{a}{4b+a} \binom{4b+a}{b}.\]
\end{lemma}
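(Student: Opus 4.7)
The plan is to recognize the left-hand side of the claimed identity as a coefficient of a power of the univariate generating function $B(y) := \bN_1(0,y) = \sum_{d \geq 0} \#N_1(0,d)\, y^d$, and then to evaluate this coefficient by Lagrange inversion. Indeed, by the combinatorial definition of a product of generating functions,
\[ [y^b]\, B(y)^a \;=\; \sum_{d_1+\ldots+d_a = b}\, \prod_{i=1}^a \#N_1(0,d_i), \]
so Lemma~\ref{lem:28} is equivalent to proving that $[y^b] B(y)^a = \frac{a}{4b+a}\binom{4b+a}{b}$.

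First I would establish the functional equation $B(y) = 1 + y\, B(y)^4$. This follows by specialising Theorem~\ref{thm:19} to $x=0$: the relation $\bN_1 = 1 + y \bN_2^4$ combined with $(1 - \bN_2 + y\bN_2^4)(1 + y\bN_2^4) = 0$ at $x=0$, together with the fact that $\bN_2(0,0) = 1$ (forcing the factor $1 + y\bN_2^4$ to be nonzero near the origin), yields $\bN_2(0,y) = 1 + y\bN_2(0,y)^4$, and hence $B = \bN_1(0,y) = \bN_2(0,y)$ satisfies the same equation. (This can also be obtained directly from the closed form $\#N_1(0,d) = \frac{1}{4d+1}\binom{4d+1}{d}$ given in Theorem~\ref{thm:23}, which is the standard Fuss--Catalan generating function.)

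Next I would apply the Lagrange inversion formula. Setting $W(y) := B(y) - 1$, the equation becomes $W = y(1+W)^4$, so with $\phi(u) := (1+u)^4$ and $H(u) := (1+u)^a$ the formula yields
\[ [y^b]\, B(y)^a \;=\; [y^b]\, H(W(y)) \;=\; \frac{1}{b}\, [u^{b-1}]\, H'(u)\, \phi(u)^b \;=\; \frac{a}{b}\, [u^{b-1}]\, (1+u)^{4b+a-1}. \]
Extracting the binomial coefficient and simplifying gives $\frac{a}{b}\binom{4b+a-1}{b-1} = \frac{a}{4b+a}\binom{4b+a}{b}$, which is precisely the right-hand side of Lemma~\ref{lem:28}.

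There is no real obstacle here: the only non-routine step is recognising the functional equation for $B(y)$, which is essentially the Fuss--Catalan recursion and is immediate from either of the two earlier results cited. Everything else is a one-line application of Lagrange inversion followed by a binomial simplification.
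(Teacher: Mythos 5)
Your proof is correct and follows essentially the same route as the paper: both identify the left-hand side as $[y^b]\,\bN_1(0,y)^a$, establish the Fuss--Catalan functional equation $B=1+yB^4$ (the paper obtains it from Lemma~\ref{lem:21} specialised to $x=0$, you from Theorem~\ref{thm:19} or Theorem~\ref{thm:23}), and then read off the coefficient. The only cosmetic difference is that you carry out the Lagrange inversion explicitly, whereas the paper simply quotes the Fuss--Catalan coefficient formula for $(1+y\bM^{4/a})^a$; both are standard and equivalent. (One tiny omission: Lagrange inversion as you stated it requires $b\geq 1$, so the $b=0$ case should be checked separately, which is immediate.)
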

\begin{proof}
Let $\bM(y)$ be the generating function $\bM(y) = \sum_{b \geq 0} m_b y^b$, where we set
$m_b = \sum_{d_1+\ldots+d_a = b} \prod_{i=1}^a \#N_1(0,d_i)$.
Recall, using Lemma~\ref{lem:21}, that $\bN_1(0,y) = \bN_4(0,y) = 1 + y \bN_4(0,y)^4 = 1 + y \bN_1(0,y)^4$.
Since $\bM(y) = \bN_1(0,y)^a$, it follows that $\bM$ is the only solution of the equation
$\bM(y) = (1 + y \bM(y)^{4/a})^a$ with non-negative coefficients,
i.e. is the Fuss-Catalan generating function
\[\bM(y) = \bB_{4,a}(y) = \sum_{b \geq 0} \frac{a}{4b+a} \binom{4b+a}{d} y^b.\]
This completes the proof of Lemma~\ref{lem:28}.
\end{proof}
\begin{theorem}
\label{thm:29}
Let $c \geq 0$ be a fixed integer. When $d \to +\infty$, we have
\[\#N_1(c,d) \sim \sqrt{\frac{2}{27 \pi}} \fracexp{4e}{3}{c} \fracexp{4^4}{3^3}{d} \frac{d^{c-3/2}}{c!}.\]
\end{theorem}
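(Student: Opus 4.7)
The plan is to show that simple configurations dominate asymptotically, so that $\#N_1(c,d) \sim \#N_4(c,d)$, whereupon Theorem~\ref{thm:22} immediately yields the stated estimate. Concretely, I would decompose
\[\#N_1(c,d) = \sum_{\bB \in \Comb(c)} \#N_1(\bB,d) = \sum_{\bB:\,w(\bB)=c} \#N_1(\bB,d) + \sum_{\bB:\,w(\bB)<c} \#N_1(\bB,d),\]
and argue that the first sum equals $\#N_4(c,d)$ while the second is of strictly lower order in $d$.

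First I would verify the \emph{structural} claim: a configuration has combinatorial type $\bB$ with $w(\bB)=c$ if and only if it is simple. In one direction, every friendly component of $\bB$ contains at least one meeting point, and each meeting point has codimension at least $1$, so the total codimension is at least $w(\bB)$; if $w(\bB)=c$, equality is forced, which means each of the $c$ friendly components contains a single meeting point of codimension exactly $1$, i.e.\ $\bB$ (and hence $\bA$) is simple in the sense of Definition~\ref{def:20}. The converse is immediate. Consequently $\sum_{w(\bB)=c} \#N_1(\bB,d) = \#N_4(c,d)$, and Theorem~\ref{thm:22} gives the main asymptotic.

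For the error term, I would apply Lemma~\ref{lem:27} together with Lemma~\ref{lem:28}: for each $\bB$ of codimension $c$ with $w(\bB) < c$, setting $\Delta = 1 + 4s(\bB) - w(\bB)$,
\[\#N_1(\bB,d) \leq (4d)^{w(\bB)} \cdot \frac{\Delta}{4(d-s(\bB))+\Delta} \binom{4(d-s(\bB))+\Delta}{d-s(\bB)}.\]
Stirling's formula, applied to the binomial coefficient with $\bB$ fixed and $d \to +\infty$, shows that the right-hand side is $O\bigl(d^{w(\bB)-3/2}\bigr)\cdot \fracexp{4^4}{3^3}{d}$; since $w(\bB) \leq c-1$, this is $o\bigl(d^{c-3/2}\bigr)\cdot \fracexp{4^4}{3^3}{d}$. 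Corollary~\ref{cor:26} guarantees that only finitely many such $\bB$ need be considered, so the entire error sum is $o\bigl(d^{c-3/2}\, (4^4/3^3)^d\bigr)$.

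Combining the two steps yields $\#N_1(c,d) = \#N_4(c,d) + o(\#N_4(c,d))$, and the claimed equivalence follows directly from Theorem~\ref{thm:22}. The main technical hurdle is the uniform Stirling analysis of the Fuss--Catalan binomial in Lemma~\ref{lem:28}: one must confirm that for every fixed $\Delta$ and as $d \to +\infty$, the factor $\binom{4m+\Delta}{m}\cdot \Delta/(4m+\Delta)$ with $m = d - s(\bB)$ scales as $d^{-3/2} (4^4/3^3)^d$ up to a constant depending only on $\bB$, so that the exponent $w(\bB) - 3/2$ in $d$ is strictly less than $c - 3/2$ and the comparison with the dominant $\#N_4(c,d)$ term is genuinely sharp.
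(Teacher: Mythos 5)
Your proof is correct and follows essentially the same route as the paper: decompose $\#N_1(c,d)$ over combinatorial types $\bB \in \Comb(c)$, identify the contribution of types with $w(\bB)=c$ as exactly $\#N_4(c,d)$ (whose asymptotics come from Theorem~\ref{thm:22}), and bound the remaining types with $w(\bB)\leq c-1$ via Lemmas~\ref{lem:27} and~\ref{lem:28} and Stirling, obtaining $O(d^{c-5/2}(4^4/3^3)^d) = o(\#N_4(c,d))$. You are slightly more careful than the paper's own write-up in stating explicitly the structural equivalence ``$w(\bB)=c$ iff $\bB$ is simple'' -- the paper leans on the one-sided inequality $\#N_1(c,d)\geq\#N_4(c,d)$ and phrases the residual case loosely as ``non-flat'' where ``non-simple'' is what is meant -- but this is a matter of exposition, not a different argument.
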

\begin{proof}
First, observe that $\#N_1(c,d) \geq \#N_4(c,d)$ for all $d \geq 0$.
Since we want to prove that $\#N_1(c,d) \sim \#N_4(c,d)$ when $d \to +\infty$,
it remains to show that $\#N_1(\bB,d) = o(\#N_4(c,d))$
for all non-flat combinatorial configurations $\bB \in \Comb(c)$.
Hence, let $\bB$ be such a combinatorial configuration, and let $\Delta =  1 + 4s(\bB) - w(\bB)$.
Since $w(\bB) \leq c-1$, it follows from Lemmas~\ref{lem:27} and~\ref{lem:28} that
\[\#N_1(\bB,d) \leq (4d)^{c-1} \frac{\Delta}{4(d - s(\bB))+\Delta} \binom{4(d - s(\bB))+\Delta}{d - s(\bB)}.\]
The Stirling formula therefore proves that 
\[\#N_1(\bB,d) = \mathcal{O}\left(d^{c-5/2} \frac{4^{4d}}{3^{3d}} \right)\]
when $d \to +\infty$, which completes the proof.
\end{proof}

\section{Conclusion}
The number of configurations for a fixed degree and codimension $c$ follow from our results on generating functions.  
We therefore answer a question of N. A'Campo, concerning the number and the growth of his bi-colored forests.  
These combinatorial tools are very useful for the understanding in how many sub-parts the space of complex polynomials can be decomposed, according to our criterion for the decomposition.

\end{document}